\documentclass[12pt,letterpaper]{amsart}

%%%% Input Type and AMS Packages %%%%
\usepackage[english]{babel}
\usepackage[utf8]{inputenc}
\usepackage{amsmath,amssymb,amsthm}

%%%% Typography %%%%
\usepackage{bbm,bm}
\usepackage{ushort}
\usepackage{enumitem}
\usepackage{calligra,mathrsfs} % for typesetting sheaf hom

\allowdisplaybreaks

\newlist{steps}{enumerate}{1}
\setlist[steps]{label=\underline{Step \arabic*:},ref=\arabic*,wide,nosep}

%%%% Math, Operators, and Theorems %%%%
\usepackage{latexsym,mathtools,braket}
\usepackage{tikz-cd}
\usepackage{thmtools,thm-restate}

%%%% Draft %%%%

% COMMENT IF FINAL
%x	\usepackage[notref,notcite]{showkeys}
%	\usepackage{lineno}
%	\usepackage{marginnote}
%	\reversemarginpar
%	\renewcommand{\linenumberfont}{\color{gray}\normalfont\tiny\sffamily}
%	\linenumbers
%
%	\newcounter{marginnote}
%	\newcommand{\mymarginnote}[1]{%
%		\stepcounter{marginnote}$\clubsuit${\normalfont\arabic{marginnote}}%
%		\marginnote{\tiny\normalfont$\clubsuit$\arabic{marginnote} #1}}

% COMMENT IF DRAFT
	\newcommand{\mymarginnote}[1]{} % Don't show margin notes.

%%%% References %%%%
\usepackage[colorlinks,plainpages,backref]{hyperref}
\usepackage{cleveref} % <<< SHOULD BE LAST
 % for bibtex stuff
 % for bibtex stuff

\numberwithin{equation}{subsection}

%%%% Theorem Definitions %%%%
\declaretheorem[name=Theorem,
	refname={theorem,theorems},
	Refname={Theorem,Theorems},
	numberwithin=section
	]{theorem}
\declaretheorem[name=Proposition,
	refname={proposition, propositions},
	Refname={Proposition, Propositions},
	sibling=theorem]{proposition}
\declaretheorem[name=Lemma,
	refname={lemma,lemmas},
	Refname={Lemma,Lemmas},
	sibling=theorem]{lemma}
\declaretheorem[name=Corollary,
	refname={corollary,corollaries},
	Refname={Corollary,Corollaries},
	sibling=theorem]{corollary}
\declaretheorem[name=Fact,
	refname={fact,facts},
	Refname={Fact,Facts},
	sibling=theorem]{fact}

\declaretheorem[name=Remark,
	refname={remark,remarks},
	Refname={Remark,Remarks},
	style=remark,
	sibling=theorem]{remark}
\declaretheorem[name=Example,
	refname={example,examples},
	Refname={Example,Examples},
	sibling=theorem,
	style=remark]{example}
\declaretheorem[name=Definition,
	refname={definition,definitions},
	Refname={Definition,Definitions},
	style=remark,
	sibling=theorem]{definition}

%%%% Personal Math Macros %%%%

%=== Redefinitions ===
\renewcommand{\vec}[1]{\mathbf{#1}}

%=== Particular Objects ===

\newcommand{\Gm}{\mathbb{G}_\mathrm{m}}

\newcommand{\CC}{\mathbb{C}}
\newcommand{\ZZ}{\mathbb{Z}}
\newcommand{\NN}{\mathbb{N}}
\newcommand{\RR}{\mathbb{R}}
\newcommand{\QQ}{\mathbb{Q}}

%=== Functors ===

\DeclareMathOperator{\shHom}{\mathscr{H}\text{\kern -3pt {\calligra\large om}}\,}
\DeclareMathOperator{\shRHom}{R\mathscr{H}\text{\kern -3pt {\calligra\large om}}\,}
\DeclareMathOperator{\Spec}{Spec}

\DeclareMathOperator{\cone}{cone}
\DeclareMathOperator{\FL}{FL}

\newcommand{\Ddual}{\mathbb{D}}

\newcommand{\Bigwedge}{\textstyle\bigwedge\!}
\newcommand{\Lder}{\mathrm{L}}

%=== Functions ===

\DeclareMathOperator{\fSupp}{fSupp}
\DeclareMathOperator{\cofSupp}{cofSupp}
\DeclareMathOperator{\sRes}{sRes}
\DeclareMathOperator{\orbit}{O}

\DeclareMathOperator{\MGM}{MGM}
\newcommand{\closure}[1]{\overline{#1}}
\newcommand{\abs}[1]{|#1|}

%=== Categories and Category Modifiers ===

\DeclareMathOperator{\MHM}{MHM}
\DeclareMathOperator{\dercat}{D}

%=== Relations ===

%=== Calligraphic Letters ===
\newcommand{\sh}[1]{\mathcal{#1}}

\newcommand{\shD}{\sh{D}}

\newcommand{\shK}{\sh{K}}

\newcommand{\shM}{\sh{M}}
\newcommand{\shN}{\sh{N}}
\newcommand{\shO}{\sh{O}}

%=== Roman Letters ===

\newcommand{\rmb}{\mathrm{b}}
\newcommand{\rmc}{\mathrm{c}}

\newcommand{\rmh}{\mathrm{h}}

%%%%%%%%%%%%%%%%%%%%%%%%%%%%%%%%%%%%%%%%%%%%%%%%%%%%%%%%%%%%%%%%%%%%%%%%%%%%%
%%%%%%%%%%%%%%%%%%%%%%%%%%%%%%%%%%%%%%%%%%%%%%%%%%%%%%%%%%%%%%%%%%%%%%%%%%%%%
\begin{document}
\title{Dualizing, projecting, and restricting GKZ systems}
%%%%%%%%%%%%%%%%%%%%%%%%%%%%%%%%%%%%%%%%%%%%%%%%%%%%%%%%%%%%%%%%%%%%%%%%%%%%%
%%%%%%%%%%%%%%%%%%%%%%%%%%%%%%%%%%%%%%%%%%%%%%%%%%%%%%%%%%%%%%%%%%%%%%%%%%%%%

\author{Avi Steiner}
\address{A.~Steiner\\
  Purdue University\\
  Dept.\ of Mathematics\\
  150 N.\ University St.\\
  West Lafayette, IN 47907\\ USA}
\email{steinea@purdue.edu}
\thanks{Supported by the National Science Foundation under grant DMS-1401392.}
\date{\today}
%%%%%%%%%%%%%%%%%%%%%%%%%%%%%%%%%%%%%%%%%%%%%%%%%%%%%%%%%%%%%%%%%%%%%%%%%%%%%
%%%%%%%%%%%%%%%%%%%%%%%%%%%%%%%%%%%%%%%%%%%%%%%%%%%%%%%%%%%%%%%%%%%%%%%%%%%%%

\begin{abstract}
	Let $A$ be an integer matrix, and assume that its semigroup ring $\CC[\NN A]$ is normal. Fix a face $F$ of the cone of $A$. We show that the projection and restriction of an $A$-hypergeometric system to the coordinate subspace corresponding to $F$ are essentially $F$-hypergeometric; moreover, at most one of them is nonzero.
	
	We also show that, if $A$ is in addition homogeneous, the holonomic dual of an $A$-hypergeometric system is itself $A$-hypergeometric. This extends a result from \cite{Wal07}, proving a conjecture of Nobuki Takayama in the normal homogeneous case.
\end{abstract}

\maketitle

%%%%%%%%%%%%%%%%%%%%%%%%%%%%%%%%%%%%%%%%%%%%%%%%%%%%%%%%%%%%%%%%%%%%%%%%%%%%%
%%%%%%%%%%%%%%%%%%%%%%%%%%%%%%%%%%%%%%%%%%%%%%%%%%%%%%%%%%%%%%%%%%%%%%%%%%%%%

% ========================================================================= %
\section{Introduction}
% ========================================================================= %

Let $A\in \ZZ^{d\times n}$ be an integer matrix with columns $\vec{a}_1,\ldots,\vec{a}_n$ such that $\ZZ A=\ZZ^d$; we abuse notation and also use $A$ to denote the set of its columns. Assume that $\NN A$ is pointed, i.e.~that $\NN{A}\cap -\NN{A}=0$. Associated to this data, Gel$'$fand, Graev, Kapranov, and Zelevinski\u{\i} defined in \cite{GGZ87,GZK89} 
a family of modules over the sheaf $\shD_{\CC^n}$ of algebraic linear partial differential operators on $\CC^n$ today referred to either as \emph{GKZ-} or \emph{$A$-hypergeometric} systems. These systems are defined as follows: 

The \emph{Euler operators} of $A$ are the operators $E_i \coloneqq a_{i1}x_1\partial_1+\cdots+a_{in}x_n\partial_n$ ($i=1,\ldots,d$), and the \emph{toric ideal} of $A$ is the $\CC[\partial_1,\ldots,\partial_n]$-ideal $I_A \coloneqq \Braket{\partial^{\vec{u}_+}-\partial^{\vec{u}_-}|A\vec{u}=0, \vec{u}\in \ZZ^n}$. The \emph{$A$-hypergeometric system} corresponding to the parameter $\beta\in \CC^d$ is then defined to be
\begin{equation}\label{eq:gkz}
	\shM_A(\beta)\coloneqq \shD_{\CC^n}/\left(\shD_{\CC^n}I_A + \shD_{\CC^n}\{E_1-\beta_1,\ldots,E_d-\beta_d\}\right).
\end{equation}
If the condition that $\ZZ A = \ZZ^d$ is relaxed, $\shM_A(\beta)$ may still be defined as above by first choosing a $\ZZ$-basis of $\ZZ A$; the resulting $\shD_{\CC^n}$-module is independent of this choice.

% ------------------------------------------------------------------------- %
\subsection{Projection and restriction}
% ------------------------------------------------------------------------- %
Explicit formulas for restriction (i.e.\ pullback via the $D$-module inverse image) to a coordinate subspace were computed in \cite[Th.~4.4]{CJT03} and \cite[Th.~4.2]{FFCJ11} for certain classes of GKZ systems. These formulas were generalized in \cite[Th.~2.2]{FFW11} under certain hypotheses about the genericity of the parameter $\beta$ and the size of the coordinate subspace. We focus on a different situation, and explicitly compute, when the semigroup ring $\CC[\NN A]$ is normal, the restriction of $\shM_A(\beta)$ to the coordinate subspace $\CC^F$ corresponding to a face $F\preceq A$ (see \eqref{eq:CCF} for the notation $\CC^F$). We also compute the projection (i.e.\ the pushforward via the $D$-module direct image) of $\shM_A(\beta)$ to $\CC^F$. Both computations appear in \Cref{th:pr/i}. Note that, unless $F=A$, the subspace $\CC^F$ does not satisfy the size requirements of \cite[Th.~2.2]{FFW11}, hence there is no nontrivial overlap between this paper and \cite{FFW11}. Whereas an earlier version of this article stated that the projection and restriction were equal, this is not actually the case. What is true is in a sense the opposite: at most one of them can be nonzero (\Cref{cor:one-or-the-other}).

Our approach is to use the notion of mixed and dual mixed Gauss--Manin systems (see \S\ref{subsec:mgm}) introduced in \cite{Ste19}. We first study these in slightly more generality in \S\ref{sec:adi}. In \S\ref{sec:tqe}, we generalize the notion of quasi-equivariant $D$-module (introduced by T.~Reichelt and U.~Walther in \cite{rw19}) to what we are calling twistedly quasi-equivariant $D$-modules (\Cref{def:tqi}). We then follow a similar process to that in \cite{rw19} to relate the restriction and projection of such modules (\Cref{lem:pi2i}) and to show that mixed and dual mixed Gauss--Manin systems are twistedly quasi-equivariant (\Cref{prop:MGMquasi-eq}). These results are combined in \S\ref{sec:p-and-r} first to compute the restriction and projection to $\CC^F$ of dual mixed Gauss--Manin and mixed Gauss--Manin systems, respectively (\Cref{th:pr/i}), and then to do the same for normal $A$-hypergeometric systems (\Cref{th:normal-pr/i}).

% ------------------------------------------------------------------------- %
\subsection{Duality}
% ------------------------------------------------------------------------- %
N.~Takayama conjectured that the holonomic dual of an $A$-hypergeometric system is itself a GKZ system (after applying the coordinate transformation $x\mapsto -x$ if $A$ is non-homogeneous, i.e.\ if the columns of $A$ do not all lie in a hyperplane). U.~Walther, in \cite{Wal07}, provided a class of counterexamples to this conjecture. However, each of these counterexamples is rank-jumping (i.e.\ the holonomic rank is higher than expected), and in the same paper, Walther shows that for generic parameters, Takayama's conjecture does indeed hold. In particular, when the semigroup ring $\CC[\NN A]$ is normal, he proves (\cite[Prop.~4.4]{Wal07}) that the set of all parameters $\beta$ for which the holonomic dual of $\shM_A(\beta)$ is not a GKZ system has codimension at least three. We show in \Cref{th:dual} using the notion of mixed and dual mixed Gauss--Manin systems that if $A$ is homogeneous, this set is in fact empty.

% ------------------------------------------------------------------------- %
\subsection*{Acknowledgements}
% ------------------------------------------------------------------------- %
\thanks{Support by the National Science Foundation under grant DMS-1401392 
is gratefully acknowledged. We would also like to thank Uli Walther for his support and guidance, and Thomas Reichelt and Christine Berkesch for intriguing discussions. In addition, we would like to thank our referees, who allowed us to catch a significant error in a previous version of this article.}

% ========================================================================= %
\section{Notation and conventions}
% ========================================================================= %
In \S\ref{subsec:geom}, we define various notations and conventions related to varieties, derived categories, $D$-modules, and mixed Hodge modules. \S\ref{subsec:fsupp} recalls the notions of fiber and cofiber support. \S\ref{subsec:toric} defines various notations related to the semigroup $\NN A$, and in \S\ref{subsec:mgm} we recall and discuss the notions of mixed and dual mixed Gauss--Manin parameters and systems.

\subsection{General geometric conventions/notation}\label{subsec:geom}
Varieties, smooth or otherwise, are not required to be irreducible, are defined over $\CC$, and are always considered with the Zariski topology. The closure of a subset $Z$ of a topological space $X$ is written $\closure{Z}$. If $X$ is a smooth variety, denote by $\shD_X$ its sheaf of algebraic linear partial differential operators. A subset $Z$ of a topological space $X$ is \emph{relatively open} if it is an open subset of its closure.

\subsubsection{Derived categories} The category of mixed Hodge modules on a variety $X$ is denoted $\MHM(X)$. The bounded derived category of $\MHM(X)$ is denoted $\dercat^\rmb\MHM(X)$. If $X$ is smooth, the bounded derived category of $\shD_X$-modules with coherent and holonomic cohomology are denoted by $\dercat^\rmb_\rmc(\shD_X)$ and $\dercat^\rmb_\rmh(\shD_X)$, respectively. If $Z$ is a closed subvariety, a superscript $Z$ in the notation for any of these categories denotes the full subcategory of objects whose cohomology is supported in $Z$.

\subsubsection{$D$-module functors} (cf.\ \cite{htt}) The holonomic duality functor (\cite[Def.~2.6.1]{htt}) is denoted $\Ddual$. Let $f\colon X\to Y$ be a morphism of smooth varieties. We write $f_+$ for the $D$-module direct image, $f_\dagger\coloneqq \Ddual f_+ \Ddual$ for the $D$-module exceptional direct image, $f^+\coloneqq \Lder f^*[\dim X-\dim Y]$ for the $D$-module inverse image, and $f^\dagger\coloneqq \Ddual f^+ \Ddual$ for the $D$-module exceptional direct image. If $X_1$ and $X_2$ are smooth varieties and $\shM_i^\bullet\in \dercat^b(\shD_{X_i})$ ($i=1,2$), the exterior tensor product (see \cite[p38]{htt}) of $\shM_1^\bullet$ and $\shM_2^\bullet$ is 
\[ \shM_1^\bullet\boxtimes \shM_2^\bullet \coloneqq \shD_{X_1\times X_2} \otimes_{p_1^{-1}\shD_{X_1}\otimes_\CC p_2^{-1}\shD_{X_2}} (p_1^{-1}\shM_1^\bullet\otimes_\CC p_2^{-1}\shM_2^\bullet).\]

Note that \cite{htt} denotes the functors $f_+$, $f^+$, $f_\dagger$, and $f^\dagger$ by $\int_f$, $f^\dagger$, $\int_{f!}$, and $f^\bigstar$, respectively. They define the first two on pages 33 and 40, respectively, while they define the second two in Def.~3.2.13 on page 91.

\subsubsection{Fourier--Laplace transform} (cf.\ \cite[pp85-102]{Bry86}) The Fourier--Laplace transform is denoted by $\FL$. By definition, $\FL(\shM^\bullet)$ is the pullback of $\shM^\bullet\in \dercat^b(\shD_{\CC^n})$ by the $\CC$-algebra automorphism of $\shD_{\CC^n}$ taking $x_i\mapsto \partial/\partial x_i$ and $\partial/\partial x_i$ to $-x_i$. The inverse Fourier transform is denoted by $\FL^{-1}$ and is defined similarly.

For a description of $\FL$ in terms of $D$-module direct and inverse image functors, see \cite{DE03}.

\subsubsection{Mixed Hodge modules}
Let $\shM^\bullet$ be a complex of mixed Hodge modules, and let $F$ be a functor of $D$-modules. If the mixed Hodge module structure on $\shM^\bullet$ induces a mixed Hodge module structure on $F(\shM^\bullet)$, we will always take $F(\shM^\bullet)$ to be this induced mixed Hodge module unless otherwise specified.

% ------------------------------------------------------------------------- %
\subsection{Toric and GKZ conventions/notation}\label{subsec:toric}
% ------------------------------------------------------------------------- %
The semigroup ring of $A$ is $S_A\coloneqq \CC[\NN A]=\CC[\partial_1,\ldots, \partial_n]/I_A$. The toric variey of $A$ is $X_A\coloneqq \operatorname{Var}(I_A)$, and the torus of $A$ is $T_A\coloneqq \Spec \CC[\ZZ A]$. Given $\beta\in \CC^d$, define the $\shD_{T_A}$-module
\begin{equation}\label{eq:OTA}
	\shO_{T_A}^\beta\coloneqq \shD_{T_A}/\shD_{T_A}\!\set{t_i\partial_{t_i} + \beta_i| i=1,\ldots,d} =  \shO_{T_A} t^{-\beta}.
\end{equation}
Note that $\shO_{T_A}^\beta$ can be defined in a coordinate-free manner (see \cite[eq.~(2.1.9)]{Ste19}). Set
\begin{equation}\label{eq:FLMA}
	\hat\shM_A(\beta)\coloneqq \FL^{-1}(\shM_A(\beta)).
\end{equation}

\begin{definition}\label{def:face}
	A submatrix $F$ of $A$ is called a \emph{face} of $A$, written $F\preceq A$, if $F$ has $d$ rows and $\RR_{\geq 0} F$ is a face of $\RR_{\geq 0} A$. A \emph{facet} of $A$ is a face of rank $d-1$.
\end{definition}

Given a face $F\preceq A$, set
\begin{equation}\label{eq:dAF}
    d_{A/F} \coloneqq d - \operatorname{rank}F = d - \dim \RR_{\geq0} F,
\end{equation}
and
\begin{equation}
    n_{A/F} \coloneqq n - \#(\text{columns of }F) = n - \dim{\CC^F}.
\end{equation}

The torus embedding $t\mapsto (t^{\vec{a}_1},\ldots, t^{\vec{a}_n})$ of $T_A$ into $\CC^n$ defined by $A$ induces an action of $T_A$ on $\CC^n$ which makes $T_A$-equivariant the inclusion $X_A\subseteq \CC^n$. If $F\preceq A$ is a face, the $T_A$-orbit of $X_A$ corresponding to $F$ is 
\begin{equation}\label{eq:orbit}
	\orbit_A(F) \coloneqq T_A\cdot \mathbbm1_F,
\end{equation}
where the $i$th coordinate of $\mathbbm1_F$ is $1$ if $\vec{a}_i\in F$ and $0$ otherwise. Set
\begin{equation}\label{eq:CCF}
	\CC^F \coloneqq \Set{x\in \CC^n| x_i=0\text{ for all }\vec{a}_i\notin F}.
\end{equation}

\begin{definition}\label{def:pisf}
	For a facet $G\preceq \NN{A}$, there is a unique linear form $h_G=h_{G,A}\colon \ZZ^d\to \ZZ$, called the \emph{primitive integral support function} of $G$, satisfying the following conditions:
\begin{enumerate}
	\item $h_G(\ZZ^d)=\ZZ$.
	\item $h_G(\vec{a}_i)\geq0$ for all $i$.
	\item $h_G(\vec{a}_i)=0$ for all $\vec{a}_i\in G$.
\end{enumerate}
\end{definition}

\subsubsection{Euler--Koszul complex} We recall the definition of Euler--Koszul complex $\shK^A_\bullet(S_A; E_A-\beta)$ from \cite{MMW05}:
\[ \shK^A_\bullet(S_A; E_A-\beta) \coloneqq K_\bullet\bigl(\cdot(E_A-\beta); \shD_{\CC^n}/\shD_{\CC^n}I_A\bigr);\]
i.e.\ it is the Koszul complex of left $\shD_{\CC^n}$-modules defined by the (right) action of the sequence $E_A-\beta=E_1-\beta_1,\ldots,E_d-\beta_d$ on the left $\shD_{\CC^n}$-module $\shD_{\CC^n}/\shD_{\CC^n}I_A$. The more general Euler--Koszul complexes defined in \cite{MMW05} will not be needed. The inverse Fourier--Laplace transform of $\shK_\bullet^A(S_A; E_A-\beta)$ is denoted by $\hat\shK^A_\bullet(S_A; E_A-\beta)$. Recall also that the zeroth homology sheaf of the Euler--Koszul complex $\shK_\bullet^A(S_A; E_A-\beta)$ is exactly the $A$-hypergeometric system $\shM_A(\beta)$. Moreover, if $S_A$ is Cohen--Macaulay (in particular, by Hochster's Theorem, if $S_A$ is normal), then $\shK^A_\bullet(S_A; E_A-\beta)$ is actually a resolution of $\shM_A(\beta)$ \cite[Th.~6.6]{MMW05}.

% ------------------------------------------------------------------------- %
\subsection{Mixed and dual mixed Gauss--Manin systems}\label{subsec:mgm}
% ------------------------------------------------------------------------- %
Given a $T_A$-stable open neighborhood $U\subseteq \CC^n$ of $T_A$ and a $\beta\in \CC^d$, set
\begin{equation}
	\MGM(U,\beta)\coloneqq \varpi_\dagger\iota_+\shO_{T_A}^\beta \quad\text{and}\quad \MGM^*(U,\beta)\coloneqq \varpi_+\iota_\dagger\shO_{T_A}^\beta,
\end{equation}
where $\iota\colon T_A\hookrightarrow U$ is the torus embedding and $\varpi\colon U\hookrightarrow \CC^n$ is inclusion.

\begin{definition}\label{def:mgm}
	A complex $\shM^\bullet\in \dercat^\rmb_\rmh(\shD_{\CC^n})$ is \emph{mixed Gauss--Manin} (resp.\ \emph{dual mixed Gauss--Manin}) if it is isomorphic to $\MGM(U,\beta)$ (resp.\ $\MGM^*(U,\beta)$ for some $U$ and $\beta$.
\end{definition}

\begin{definition}\label{def:mgm-param}
	A parameter $\beta\in \CC^d$ is \emph{mixed Gauss--Manin} (resp.\ \emph{dual mixed Gauss--Manin}) if $\hat\shK_A(S_A; E_A-\beta)$ is mixed Gauss--Manin (resp.\ dual mixed Gauss--Manin).
\end{definition}

Note that the definitions of mixed and dual mixed Gauss--Manin parameters in \cite[Def.~8.15]{Ste19} is different than that in \Cref{def:mgm-param}. However, the two definitions are equivalent by \cite[Th.~8.17 and 8.19]{Ste19}.

% ------------------------------------------------------------------------- %
\subsection{Fiber and cofiber support}\label{subsec:fsupp}
% ------------------------------------------------------------------------- %
We recall from \cite[Def.~3.1]{Ste19} the notions of fiber and cofiber support---refer there for main properties along with additional examples. The \emph{fiber support} of a (bounded) complex $\shM^\bullet$ of $\shO_X$-modules is
\begin{equation}
	\fSupp \shM^\bullet \coloneqq \Set{x\in X | k(x)\otimes_{\shO_{X,x}}^\Lder \shM^\bullet_x \neq 0},
\end{equation}
where $k(x)$ denotes the residue field of the point $x\in X$. If $\shM^\bullet\in \dercat^\rmb_\rmc(\shD_X)$, its \emph{cofiber support} is
\begin{equation}
	\cofSupp \shM^\bullet \coloneqq \fSupp \Ddual \shM^\bullet.
\end{equation}
Note that both the fiber support and cofiber support are independent of the complex representing the object $\shM^\bullet\in \dercat^\rmb_\rmc(\shD_X)$.

\begin{example}
    Let $A=\begin{bmatrix}1&1&1&1\\0&1&2&3\end{bmatrix}$ and $\beta=(-1,1)^\top$. We describe the fiber and cofiber support of $\hat\shM_A(\beta)$ using \cite[Lem.~9.1]{Ste19}. The facets of $A$ are $F_1=[\vec{a}_1]$ and $F_2=[\vec{a}_4]$, and their primitive integral support functions (see \Cref{def:pisf}) are $h_1(x,y)=y$ and $h_2(x,y)=3x-y$, resp. Applying these to the vector $\beta$, we get $h_1(\beta)=1\in \NN$ and $h_2(\beta)=-4\in \ZZ_{<0}$. Therefore, $\orbit_A(F_1)$ is in the cofiber support but not the fiber support, $\orbit_A(F_2)$ is in the fiber support but not in the cofiber support, $\orbit_A(\emptyset)$ is in neither, and $\orbit_A(A)$ is in both. In summary,
    \begin{align*}
        \fSupp \hat\shM_A(\beta) &= \orbit_A(F_2) \cup \orbit_A(A)\\
        \intertext{and}
        \cofSupp \hat\shM_A(\beta) &= \orbit_A(F_1) \cup \orbit_A(A).
    \end{align*}
\end{example}

% ========================================================================= %
\section{Alternating direct images}\label{sec:adi}
% ========================================================================= %
In this section we discuss a generalization of mixed and dual mixed Gauss--Manin systems which we will refer to by the name ``alternating direct images''. 

In \S\ref{subsec:fixed-U}, we characterize in terms of fiber and cofiber support when a $D$-module or mixed Hodge module is isomorphic to a given alternating direct image. 

In \S\ref{subsec:gen-U}, we use the results of \S\ref{subsec:fixed-U} to characterize, under a certain openness condition, when a $D$-module or mixed Hodge module is isomorphic to \emph{some} alternating direct image.

In \S\ref{subsec:gkz-vs-mgm}, we specialize \Cref{cor:gen-plus-dag,cor:gen-dag-plus} to the GKZ case (\Cref{thm:both}). As a consequence, we obtain \Cref{cor:mgm-mgm*}, which states that for GKZ systems, being dual mixed Gauss--Manin is the same as being mixed Gauss--Manin and not rank-jumping.

% ------------------------------------------------------------------------- %
\subsection{Characterizing alternating direct images passing through a fixed \texorpdfstring{$U$}{U}}\label{subsec:fixed-U}
% ------------------------------------------------------------------------- %
Let
	\[ Z \xrightarrow{\;\iota\;} U \xrightarrow{\;\varpi\;} X\]
be inclusions of smooth (locally closed) subvarieties, where $U$ is open in $X$, and set $\varphi\coloneqq \varpi\circ  \iota$. We associate to this situation the \emph{alternating direct image} functors $\varpi_+\iota_\dagger$ and $\varpi_\dagger\iota_+$.

\begin{remark}\label{rmk:shrink}
	Note that if $\shN^\bullet$ is in $\dercat^{\rmb,\closure{Z}}_\rmc(\shD_X)$ or $\dercat^\rmb(\MHM^{\closure{Z}}(X))$, then $\varphi^+\shN^\bullet$ is canonically isomorphic to $\varphi^\dagger\shN^\bullet$. To see this, notice that because $\varpi$ is an open embedding, $\varpi^\dagger=\varpi^+$; now shrink $U$ so that $\iota$ is a closed immersion, then apply Kashiwara's equivalence.
\end{remark}

\begin{lemma}\label{lem:gen-plus-dag}
	Let $\shM^\bullet \in \dercat^\rmb_\rmc(\shD_Z)$ (resp.\ $\shM^\bullet\in \dercat^\rmb(\MHM(Z))$). Then $\varpi_+\iota_\dagger \shM^\bullet$ is the unique object in $\dercat^{\rmb,\closure{Z}}_\rmc(\shD_X)$ (resp.\ in $\dercat^\rmb(\MHM^{\closure{Z}}(X))$) such that
	\begin{enumerate}
		\item\label{lem:gen-plus-dag.1} the restriction to $Z$ is isomorphic to $\shM^\bullet$;
		\item\label{lem:gen-plus-dag.2} the fiber support is contained in $U$; and
		\item\label{lem:gen-plus-dag.3} the cofiber support intersected with $U$ is contained in $Z$.
	\end{enumerate}
\end{lemma}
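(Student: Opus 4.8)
The plan is to prove uniqueness and existence separately, with existence amounting to checking that $\varpi_+\iota_\dagger\shM^\bullet$ has the three listed properties. For existence, property \eqref{lem:gen-plus-dag.1} is immediate: restricting to $Z$ means applying $\varphi^+$, and since $\varpi$ is an open embedding and $\iota_\dagger$ is, after shrinking $U$, a closed immersion followed by Kashiwara's equivalence, we get $\varphi^+\varpi_+\iota_\dagger\shM^\bullet\cong \iota^+\iota_\dagger\shM^\bullet\cong \iota^\dagger\iota_\dagger\shM^\bullet\cong \shM^\bullet$ (using \Cref{rmk:shrink} to pass between $\varphi^+$ and $\varphi^\dagger$, and the fact that $\varpi^+\varpi_+=\mathrm{id}$ on objects supported in $\closure Z\subseteq U$). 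For property \eqref{lem:gen-plus-dag.2}, I would invoke the behavior of fiber support under the functors $\iota_\dagger$ and $\varpi_+$ established in \cite{Ste19}: $\iota_\dagger$ does not enlarge fiber support beyond $\closure Z$ inside $U$ (this is where the exceptional, rather than ordinary, direct image matters), and $\varpi_+$ of something with fiber support in the closed-in-$X$ set $\closure Z\cap U$... — here one must be slightly careful, since $\varpi_+$ of a sheaf supported on a locally closed set can spread the fiber support to the closure; the point is that $\fSupp\varpi_+\iota_\dagger\shM^\bullet\subseteq U$ because $\varpi_+$ restricted to $U$ is the identity and $\varpi$ is an open embedding, so no new fiber support appears outside what is already inside $U$, while inside $U$ the fiber support is that of $\iota_\dagger\shM^\bullet$. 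Property \eqref{lem:gen-plus-dag.3} is property \eqref{lem:gen-plus-dag.2} for the holonomic dual: $\Ddual\varpi_+\iota_\dagger\shM^\bullet\cong\varpi_\dagger\iota_+\Ddual\shM^\bullet$, so $\cofSupp\varpi_+\iota_\dagger\shM^\bullet=\fSupp\varpi_\dagger\iota_+\Ddual\shM^\bullet$, and one checks that $\varpi_\dagger\iota_+$ keeps fiber support inside $\closure Z$ within $U$ (now it is $\iota_+$ that must be ordinary and $\varpi_\dagger$ exceptional) — note the symmetry with the previous case under swapping $\dagger\leftrightarrow +$ and $Z\leftrightarrow$ "everything".

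For uniqueness, suppose $\shN^\bullet\in\dercat^{\rmb,\closure Z}_\rmc(\shD_X)$ satisfies \eqref{lem:gen-plus-dag.1}--\eqref{lem:gen-plus-dag.3}. I would first use \eqref{lem:gen-plus-dag.2}, fiber support contained in $U$, together with the general principle that fiber support controls where a complex "lives" — concretely, that $\shN^\bullet\cong\varpi_+\varpi^+\shN^\bullet$ whenever $\fSupp\shN^\bullet\subseteq U$ (the cone of the counit $\varpi_+\varpi^+\shN^\bullet\to\shN^\bullet$ would have empty fiber support, hence be zero; one should double-check the variance, perhaps it is the unit and one needs $\shN^\bullet$ supported in $U$ as a complex, which follows from fiber support $\subseteq U$ being closed in $X$... actually $U$ is open, so "fiber support $\subseteq U$" plus "cohomology supported in $\closure Z$" needs $\closure Z\subseteq U$, which is not assumed — so the correct statement is that $\fSupp\subseteq U$ already forces the adjunction iso, via \cite[§3]{Ste19}). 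Then set $\shN_U^\bullet\coloneqq\varpi^+\shN^\bullet$; by \eqref{lem:gen-plus-dag.1} its restriction to $Z$ is $\shM^\bullet$, by \eqref{lem:gen-plus-dag.3} its cofiber support lies in $Z$ (closed in $U$), and dually-to-the-above this forces $\shN_U^\bullet\cong\iota_\dagger\iota^\dagger\shN_U^\bullet\cong\iota_\dagger\shM^\bullet$ (the cofiber-support version of the adjunction isomorphism: $\cofSupp\subseteq Z$ makes $\iota_\dagger\iota^\dagger\to\mathrm{id}$, or its dual, an iso, again by \cite[§3]{Ste19}). Combining, $\shN^\bullet\cong\varpi_+\iota_\dagger\shM^\bullet$. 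The mixed Hodge module case is formally identical since all functors used ($\varpi_+,\iota_\dagger,\varpi^+,\iota^\dagger,\Ddual$) and the fiber/cofiber support formalism lift to $\dercat^\rmb\MHM$.

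The main obstacle I anticipate is getting the fiber-support bookkeeping exactly right — specifically, proving cleanly that $\fSupp\subseteq U$ implies $\shN^\bullet\cong\varpi_+\varpi^+\shN^\bullet$ and the analogous cofiber-support statement for $\iota_\dagger$, since these are the load-bearing facts and $U$ is only locally closed (open in $X$) while $Z$ is only locally closed in $U$, so one cannot naively invoke "Kashiwara's equivalence for closed immersions" without first shrinking, and one must be sure the shrinking is compatible with the fiber/cofiber support hypotheses. I would isolate these two adjunction-isomorphism statements as the technical core, proving each by showing the cone of the relevant unit/counit has empty fiber (resp. cofiber) support and then citing that a complex in $\dercat^\rmb_\rmc$ with empty fiber support is zero — all of which should be available from \cite[Def.~3.1]{Ste19} and its listed properties. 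Everything else is then a short assembly.
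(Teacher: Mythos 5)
Your proposal is correct and takes essentially the same route as the paper: both verify the three conditions using the behaviour of fiber/cofiber support under the embedding functors from \cite[Lem.~3.3]{Ste19}, and both prove uniqueness by building the comparison maps from adjunction and killing their cones via the ``empty (co)fiber support implies vanishing'' criterion \cite[Cor.~3.6]{Ste19} (the paper packages your two adjunction-isomorphism steps as a single morphism $f\colon \iota_\dagger\shM^\bullet\to\varpi^+\shN^\bullet$ followed by a dual argument). One precision to tighten: for condition~\ref{lem:gen-plus-dag.3} it does not suffice that $\varpi_\dagger\iota_+$ keeps fiber support inside $\closure{Z}$ within $U$ --- you need containment in $Z$ itself, i.e.\ $i_x^\dagger\iota_\dagger\shM^\bullet=0$ for $x\in U\setminus Z$, which is exactly the sharper statement of \cite[Lem.~3.3]{Ste19} that the paper invokes.
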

\begin{proof}
	We first show that $\varpi_+\iota_\dagger\shM^\bullet$ satisfies the required properties. Because both $\iota$ and $\varpi$ are inclusions of (locally closed) subvarieties, $\varpi_+\iota_\dagger\shM^\bullet$ is supported on $\closure{Z}$. Applying $\varphi^+$ to $\varpi_+\iota_\dagger\shM^\bullet$, we get
	\[\varphi^+\varpi_+\iota_\dagger\shM^\bullet = \iota^+\iota_\dagger\shM^\bullet = \iota^\dagger\iota_\dagger\shM^\bullet= \shM^\bullet,\]
	where the second equality follows for the same reason as in \Cref{rmk:shrink}. So the restriction to $Z$ is $\shM^\bullet$. Let $i_x$ denote inclusion of a point $x\in X$. If $x\notin U$, then $i_x^+\varpi_+\iota_\dagger \shM^\bullet$ vanishes by \cite[Lem.~3.3]{Ste19}, so the fiber support is contained in $U$. If $x\in U\setminus Z$, then also by \cite[Lem.~3.3]{Ste19},
	\[ i_x^\dagger\varpi_+\iota_\dagger\shM^\bullet = i_x^\dagger\iota_\dagger\shM^\bullet =0.\]
	So, the cofiber support intersected with $U$ is contained in $Z$.
	
	We now prove uniqueness. Suppose $\shN^\bullet$ also satisfies the properties. Then the equality of $\varphi^+\shN^\bullet$ and $\shM^\bullet$ induces a morphism $f\colon \iota_\dagger\shM^\bullet \to \varpi^+\shN^\bullet$. By property~\ref{lem:gen-plus-dag.3}, $i_x^\dagger f = 0$ for all $x\in U\setminus Z$, while by property~\ref{lem:gen-plus-dag.1}, the restriction $\iota^+ f$ is an equality. Hence, $\cone(f)$ has empty fiber support, and therefore it vanishes by \cite[Cor.~3.6]{Ste19}. Thus, $f$ is an isomorphism. By duality, the same argument applied to the case $Z=U$ and $\shM^\bullet=\varpi^+\shN^\bullet$ gives an isomorphism $\shN^\bullet\to\varpi_+\iota_\dagger\shM^\bullet$.
\end{proof}

\begin{lemma}\label{lem:gen-dag-plus}
	Let $\shM^\bullet \in \dercat^\rmb_\rmc(\shD_Z)$ (resp.\ $\shM^\bullet\in \dercat^\rmb(\MHM(Z))$). Then $\varpi_\dagger\iota_+ \shM^\bullet$ is the unique object in $\dercat^{\rmb,\closure{Z}}_\rmc(\shD_X)$ (resp.\ in $\dercat^\rmb(\MHM^{\closure{Z}}(X))$) such that
	\begin{enumerate}
		\item the restriction to $Z$ equals $\shM^\bullet$; %$(\varpi\circ\iota)^+\shN^\bullet = \shM^\bullet$; and
		\item the cofiber support is contained in $U$; and
		\item the fiber support intersected with $U$ is contained in $Z$.
	\end{enumerate}
\end{lemma}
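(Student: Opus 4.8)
The plan is to obtain \Cref{lem:gen-dag-plus} from \Cref{lem:gen-plus-dag} by applying holonomic duality, exploiting the fact that $\Ddual$ exchanges $\iota_+\leftrightarrow\iota_\dagger$, $\varpi_+\leftrightarrow\varpi_\dagger$, and $\fSupp\leftrightarrow\cofSupp$ (the latter being the very definition of cofiber support). Concretely, I would argue as follows. For the existence half, set $\shN^\bullet\coloneqq \varpi_\dagger\iota_+\shM^\bullet$. Since $\Ddual\shN^\bullet = \Ddual\varpi_\dagger\iota_+\shM^\bullet = \varpi_+\iota_\dagger\Ddual\shM^\bullet$, \Cref{lem:gen-plus-dag} applied to $\Ddual\shM^\bullet$ tells us that $\Ddual\shN^\bullet$ lies in $\dercat^{\rmb,\closure Z}_\rmc(\shD_X)$ (hence so does $\shN^\bullet$, as $\Ddual$ preserves this category), that its restriction to $Z$ is $\Ddual\shM^\bullet$, that $\fSupp\Ddual\shN^\bullet\subseteq U$, and that $\cofSupp\Ddual\shN^\bullet\cap U\subseteq Z$. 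Now dualize each of these: restriction commutes with $\Ddual$ up to the shift built into $\iota^+$ versus $\iota^\dagger$, but since $\iota$ is (after shrinking $U$) a closed immersion, $\iota^+\shN^\bullet\cong\iota^\dagger\shN^\bullet$ by Kashiwara's equivalence as in \Cref{rmk:shrink}, so the restriction of $\shN^\bullet$ to $Z$ is $\Ddual\Ddual\shM^\bullet=\shM^\bullet$; and $\cofSupp\shN^\bullet=\fSupp\Ddual\shN^\bullet\subseteq U$, while $\fSupp\shN^\bullet=\cofSupp\Ddual\shN^\bullet$, whose intersection with $U$ is contained in $Z$. This gives all three properties.

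For uniqueness, suppose $\shN^\bullet$ is any object of $\dercat^{\rmb,\closure Z}_\rmc(\shD_X)$ satisfying the three listed properties. Then $\Ddual\shN^\bullet$ lies in the same category, its restriction to $Z$ is $\Ddual\shM^\bullet$ (again using $\iota^+\cong\iota^\dagger$ on $\closure Z$-supported objects), its fiber support $\cofSupp\shN^\bullet$ is contained in $U$, and $\cofSupp\Ddual\shN^\bullet\cap U = \fSupp\shN^\bullet\cap U\subseteq Z$. Thus $\Ddual\shN^\bullet$ satisfies the hypotheses characterizing $\varpi_+\iota_\dagger\Ddual\shM^\bullet$ in \Cref{lem:gen-plus-dag}, so $\Ddual\shN^\bullet\cong\varpi_+\iota_\dagger\Ddual\shM^\bullet$, and applying $\Ddual$ once more yields $\shN^\bullet\cong\Ddual\varpi_+\iota_\dagger\Ddual\shM^\bullet=\varpi_\dagger\iota_+\shM^\bullet$. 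The mixed Hodge module case is identical, since $\Ddual$ is also defined on $\dercat^\rmb\MHM(X)$, preserves the subcategory of objects supported on $\closure Z$, and intertwines the relevant functors; the fiber/cofiber support statements only depend on the underlying $D$-module.

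The one point requiring a little care — and the only place I would expect any friction — is the compatibility of $\Ddual$ with the restriction functor to $Z$: a priori $\Ddual\varphi^+ = \varphi^\dagger\Ddual$ rather than $\varphi^+\Ddual$, and these differ by a shift by $2(\dim X - \dim Z)$. But since we are only ever applying this to objects supported on $\closure Z$, \Cref{rmk:shrink} says $\varphi^+$ and $\varphi^\dagger$ agree on the nose there, so the shift disappears and the duality bookkeeping goes through cleanly. Everything else is a purely formal transport of \Cref{lem:gen-plus-dag} across $\Ddual$, so no new ideas are needed beyond recognizing that \Cref{lem:gen-dag-plus} is literally the dual statement.
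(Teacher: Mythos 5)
Your proof is correct and takes exactly the paper's route: the paper's own proof of \Cref{lem:gen-dag-plus} is the one-line statement that it follows from \Cref{lem:gen-plus-dag} by duality, and your argument simply fills in the duality bookkeeping (including the correct use of \Cref{rmk:shrink} to reconcile $\varphi^+$ with $\varphi^\dagger$ on $\closure{Z}$-supported objects). Nothing further is needed.
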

\begin{proof}
	This follows from \Cref{lem:gen-plus-dag} by duality.
\end{proof}

%\begin{definition}
%	Let $\shM^\bullet\in \dercat^\rmb_\rmc(\shD_Z)$.
%	\begin{enumerate}
%		\item $\shM^\bullet$ has \emph{semi-strict fiber support in $U$satisfies conditions
%	\end{enumerate}
%\end{definition}

\begin{remark}
	Let $\shM^\bullet\in \dercat^\rmb(\MHM(Z))$. \Cref{lem:gen-plus-dag,lem:gen-dag-plus} imply that if there are open neighborhoods $U$ and $U'$ of $Z$ such that $\varpi_+\iota_\dagger\shM^\bullet$ and $\varpi'_\dagger\iota'_+\shM^\bullet$ are isomorphic as $\shD_X$-modules, then they are also isomorphic as mixed Hodge modules.
\end{remark}

Finally, we relate the fiber (resp.\ cofiber) supports of $\varpi_+\iota_\dagger\shM^\bullet$ and $\varphi_\dagger \shM^\bullet$ (resp.\ of $\varpi_\dagger\iota_+\shM^\bullet$ and $\varphi_+\shM^\bullet$). Part \eqref{lem:plus-dag-vs-dag-etc.isoms} of the following \namecref{lem:plus-dag-vs-dag-etc} generalizes \cite[Lem.~8.1]{Ste19}.  Recall that a set is relatively open if it is an open subset of its closure.

\begin{lemma}\label{lem:plus-dag-vs-dag-etc}
    Let $\shM^\bullet \in \dercat^\rmb_\rmc(\shD_Z)$ (resp.\ $\shM^\bullet\in \dercat^\rmb(\MHM(Z))$).
    \begin{enumerate}
        \item\label{lem:plus-dag-vs-dag-etc.isoms} There are natural isomorphisms 
        \begin{equation*}
            \varpi_+\iota_\dagger\shM^\bullet \cong \varpi_+\varpi^+\varphi_\dagger\shM^\bullet
            \quad\text{and}
            \quad
            \varpi_\dagger\iota_+\shM^\bullet \cong \varpi_\dagger\varpi^\dagger\varphi_+\shM^\bullet.
        \end{equation*}
        
        \item\label{lem:plus-dag-vs-dag-etc.supps} If $\fSupp\varphi_\dagger\shM^\bullet$ (resp.\ $\cofSupp\varphi_+\shM^\bullet$) is relatively open, then so is $\fSupp\varpi_+\iota_\dagger\shM^\bullet$ (resp.\ $\cofSupp\varpi_\dagger\iota_+\shM^\bullet$).
    \end{enumerate}
\end{lemma}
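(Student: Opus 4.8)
The plan is to establish the two isomorphisms in part \eqref{lem:plus-dag-vs-dag-etc.isoms} first, and then deduce part \eqref{lem:plus-dag-vs-dag-etc.supps} as a formal consequence of them together with the base-change behavior of fiber support. For the isomorphisms, I would work with the first one, $\varpi_+\iota_\dagger\shM^\bullet \cong \varpi_+\varpi^+\varphi_\dagger\shM^\bullet$, the second following by applying $\Ddual$ and using $\varphi_\dagger=\Ddual\varphi_+\Ddual$, $\iota_+=\Ddual\iota_\dagger\Ddual$, $\varpi_\dagger=\Ddual\varpi_+\Ddual$, and the fact that $\varpi$ is an open embedding so that $\varpi^\dagger=\varpi^+$ commutes with $\Ddual$ up to the usual shift. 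The key point for the first isomorphism is that $\varphi=\varpi\circ\iota$ factors through the open $U$, and on $U$ the exceptional direct image $\iota_\dagger\shM^\bullet$ is recovered from $\varphi_\dagger\shM^\bullet$ by restriction: since $\varpi$ is an open immersion, $\varpi^+\varphi_\dagger\shM^\bullet = \varpi^+\varpi_\dagger\iota_\dagger\shM^\bullet$, and the adjunction/base-change counit $\varpi^+\varpi_\dagger \to \mathrm{id}$ restricted to objects supported on the closed-in-$U$ subvariety $Z$ is an isomorphism (shrink $U$ so $\iota$ is a closed immersion, as in \Cref{rmk:shrink}, and apply Kashiwara's equivalence, under which $\varpi^+\varpi_\dagger$ becomes the identity on modules supported in $Z$). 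Hence $\varpi^+\varphi_\dagger\shM^\bullet \cong \iota_\dagger\shM^\bullet$, and applying $\varpi_+$ gives the claim. One must check this chain of identifications is natural, i.e.\ induced by the unit/counit maps of the adjunctions, but that is routine.

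For part \eqref{lem:plus-dag-vs-dag-etc.supps}, I would argue as follows. By \cite[Lem.~3.3]{Ste19} (used already in the proof of \Cref{lem:gen-plus-dag}), for an open immersion $\varpi$ the fiber support satisfies $\fSupp \varpi_+\shN^\bullet = \closure{\fSupp\shN^\bullet}\cap(\text{something})$—more precisely, $i_x^+\varpi_+\shN^\bullet$ vanishes for $x\notin U$, so $\fSupp\varpi_+\varpi^+\shN^\bullet \subseteq U$, while for $x\in U$ we have $i_x^+\varpi_+\varpi^+\shN^\bullet = i_x^+\shN^\bullet$ because $\varpi$ is an open immersion. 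Therefore
\begin{equation*}
    \fSupp \varpi_+\varpi^+\varphi_\dagger\shM^\bullet = \fSupp\varphi_\dagger\shM^\bullet \cap U = \fSupp\varphi_\dagger\shM^\bullet \cap (\text{open } U).
\end{equation*}
Now if $\fSupp\varphi_\dagger\shM^\bullet$ is relatively open, i.e.\ open in its closure $C$, then its intersection with the open set $U$ is open in $C\cap\closure{U\cap\dots}$; more carefully, since $\fSupp\varphi_\dagger\shM^\bullet\subseteq\closure{Z}\subseteq\closure{U}$ already, relative openness of $\fSupp\varphi_\dagger\shM^\bullet$ in $\closure{\fSupp\varphi_\dagger\shM^\bullet}$ is inherited by $\fSupp\varphi_\dagger\shM^\bullet\cap U$ in $\closure{\fSupp\varphi_\dagger\shM^\bullet\cap U}$, because intersecting with an open set preserves the property of being relatively open. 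Using part \eqref{lem:plus-dag-vs-dag-etc.isoms} to replace $\varpi_+\varpi^+\varphi_\dagger\shM^\bullet$ by $\varpi_+\iota_\dagger\shM^\bullet$ finishes the fiber-support case; the cofiber-support case is obtained by applying $\Ddual$.

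I expect the main obstacle to be the fiber-support bookkeeping in part \eqref{lem:plus-dag-vs-dag-etc.supps}: one has to be careful that "relatively open" is genuinely stable under intersecting with an open set \emph{and} that the closures behave well, i.e.\ that $\closure{\fSupp\varphi_\dagger\shM^\bullet \cap U} = \closure{\fSupp\varphi_\dagger\shM^\bullet}\cap\closure{U}$ is not what is needed—rather, one only needs that $S\cap U$ is open in $\closure{S\cap U}$ whenever $S$ is open in $\closure S$ and $U$ is open, which is a short point-set topology lemma (and is already implicitly the content behind "relatively open" appearing in \cite[Lem.~8.1]{Ste19}). Establishing naturality of the isomorphisms in part \eqref{lem:plus-dag-vs-dag-etc.isoms} is the other place where care is required, but it is a formal diagram chase with the adjunction (co)units and Kashiwara's equivalence.
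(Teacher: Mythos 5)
Your proposal is correct, and it reaches the same key identity as the paper but packages it differently. For part \eqref{lem:plus-dag-vs-dag-etc.isoms}, the paper does not argue directly: it shows that $\varpi_+\varpi^+\varphi_\dagger\shM^\bullet$ satisfies the three conditions of \Cref{lem:gen-plus-dag} (restriction to $Z$, fiber support in $U$, cofiber support meeting $U$ inside $Z$) and then invokes the uniqueness statement of that lemma; the computation $\varpi^+\varphi_\dagger\shM^\bullet\cong\iota_\dagger\shM^\bullet$ that you make central appears there only as the verification of the third condition. Your route is more direct: since $\varphi_\dagger=\varpi_\dagger\iota_\dagger$ and $\varpi^+\varpi_\dagger\cong\mathrm{id}$ for the open immersion $\varpi$, you get $\varpi^+\varphi_\dagger\shM^\bullet\cong\iota_\dagger\shM^\bullet$ and apply $\varpi_+$; this avoids the support bookkeeping of \Cref{lem:gen-plus-dag} entirely and gives naturality for free from the adjunction (co)units. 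One blemish: your justification of $\varpi^+\varpi_\dagger\cong\mathrm{id}$ via shrinking $U$ and Kashiwara's equivalence is both unnecessary and slightly off --- shrinking $U$ changes the functor $\varpi_\dagger$ (and the object $\varpi_+\iota_\dagger\shM^\bullet$), unlike in \Cref{rmk:shrink} where only the restriction functors $\varphi^+,\varphi^\dagger$ are at stake; but the identity $\varpi^+\varpi_\dagger\cong\mathrm{id}$ holds for any open immersion with no support hypothesis (it is dual to $\varpi^+\varpi_+\cong\mathrm{id}$, using $\varpi^+=\varpi^\dagger$), so nothing breaks. Your duality deduction of the second isomorphism and your treatment of part \eqref{lem:plus-dag-vs-dag-etc.supps} --- computing $\fSupp\varpi_+\varpi^+\varphi_\dagger\shM^\bullet=\fSupp\varphi_\dagger\shM^\bullet\cap U$ via \cite[Lem.~3.3]{Ste19} and the observation that intersecting a relatively open set with an open set stays relatively open --- match the paper's intended use of part \eqref{lem:plus-dag-vs-dag-etc.isoms} together with \cite[Lem.~3.3]{Ste19}.
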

\begin{proof}
    \eqref{lem:plus-dag-vs-dag-etc.isoms} We prove the first isomorphism. The second follows via duality.
    
    It suffices to show that $\varpi_+\varpi^+\varphi_\dagger\shM^\bullet$ satisfies the three conditions of \Cref{lem:gen-plus-dag}. Conditions \ref{lem:gen-plus-dag.1} and \ref{lem:gen-plus-dag.2} are straightforward from the definitions. To prove condition \ref{lem:gen-plus-dag.3}, observe that
    \[\varpi^+\varpi_+\varpi^+\varphi_\dagger\shM^\bullet 
        \cong \varpi^+\varphi_\dagger\shM^\bullet
        \cong \varpi^+\varpi_+\iota_\dagger\shM^\bullet
        \cong \iota_\dagger\shM^\bullet.\]
    Now apply \cite[Lem.~3.3]{Ste19}.
    
    \eqref{lem:plus-dag-vs-dag-etc.supps} Use part \eqref{lem:plus-dag-vs-dag-etc.isoms} along with \cite[Lem.~3.3]{Ste19}.
\end{proof}

% ------------------------------------------------------------------------- %
\subsection{The relatively open (co)fiber support case}\label{subsec:gen-U}
% ------------------------------------------------------------------------- %

If the fiber support of $\varphi_\dagger\shM^\bullet$ is relatively open, then the same is true of $\varpi_+\iota_\dagger\shM^\bullet$ by \Cref{lem:plus-dag-vs-dag-etc}\eqref{lem:plus-dag-vs-dag-etc.supps}. We may therefore shrink $U$ so that $U\cap \closure Z= \fSupp \varpi_+\iota_\dagger \shM^\bullet$ without changing $\varpi_+\iota_\dagger\shM^\bullet$. Similarly, if the cofiber support of $\varphi_+\shM^\bullet$ is relatively open, then we may shrink $U$ so that $U\cap \closure Z= \cofSupp \varpi_\dagger\iota_+ \shM^\bullet$ without changing $\varpi_\dagger\iota_+\shM^\bullet$. As an immediate consequence, we get the following corollaries of \Cref{lem:gen-plus-dag,lem:gen-dag-plus}:

\begin{corollary}\label{cor:gen-plus-dag}
	Let $\shM^\bullet \in \dercat^\rmb_\rmc(\shD_Z)$ (resp.\ $\shM^\bullet\in \dercat^\rmb(\MHM(Z))$), and assume that the fiber support of $\varphi_\dagger\shM^\bullet$ is relatively open. Let $\shN^\bullet\in \dercat^{\rmb,\closure{Z}}_\rmc(\shD_X)$ (resp.\ in $\dercat^\rmb(\MHM^{\closure{Z}}(X))$). Then there exists an open neighborhood $U\subseteq X$ of $Z$ such that (in the notation of \S\ref{subsec:fixed-U}) $\varpi_+\iota_\dagger\shM^\bullet \cong \shN^\bullet$ if and only if all of the following conditions hold:
	\begin{enumerate}
		\item $\varphi^+\shN^\bullet \cong \shM^\bullet$;
		\item $\fSupp \shN^\bullet \cap \cofSupp \shN^\bullet \subseteq Z$; and
		\item\label{cor:gen-plus-dag.relop} $\fSupp \shN^\bullet$ is relatively open.
	\end{enumerate}
\end{corollary}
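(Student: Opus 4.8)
The plan is to reduce everything to Lemma \ref{lem:gen-plus-dag} by choosing $U$ appropriately. First I would prove the forward direction. Suppose $U$ is an open neighborhood of $Z$ with $\varpi_+\iota_\dagger\shM^\bullet \cong \shN^\bullet$. Then Lemma \ref{lem:gen-plus-dag} immediately gives $\varphi^+\shN^\bullet \cong \shM^\bullet$, and also that $\fSupp\shN^\bullet \subseteq U$ while $\cofSupp\shN^\bullet \cap U \subseteq Z$; combining these two yields $\fSupp\shN^\bullet \cap \cofSupp\shN^\bullet \subseteq U \cap \cofSupp\shN^\bullet \subseteq Z$, which is condition (2). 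For condition (3), note that by Lemma \ref{lem:plus-dag-vs-dag-etc}\eqref{lem:plus-dag-vs-dag-etc.supps} the relative openness of $\fSupp\varphi_\dagger\shM^\bullet$ forces $\fSupp\varpi_+\iota_\dagger\shM^\bullet = \fSupp\shN^\bullet$ to be relatively open.

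For the converse, assume (1), (2), (3) hold. The hypothesis that $\fSupp\varphi_\dagger\shM^\bullet$ is relatively open, together with Lemma \ref{lem:plus-dag-vs-dag-etc}, lets me shrink $U$ (as explained in the paragraph preceding the corollary) so that $U \cap \closure{Z} = \fSupp\varpi_+\iota_\dagger\shM^\bullet$ without changing the object $\varpi_+\iota_\dagger\shM^\bullet$. I claim this $U$ works. By Lemma \ref{lem:gen-plus-dag}, $\varpi_+\iota_\dagger\shM^\bullet$ is the unique object of $\dercat^{\rmb,\closure{Z}}_\rmc(\shD_X)$ (resp.\ $\dercat^\rmb(\MHM^{\closure{Z}}(X))$) satisfying the three conditions there, so it suffices to verify that $\shN^\bullet$ satisfies them. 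Condition \ref{lem:gen-plus-dag.1} is exactly hypothesis (1). For condition \ref{lem:gen-plus-dag.2}: restricting $\shN^\bullet$ to $Z$ gives $\shM^\bullet$ by (1), so $\varphi_\dagger\shM^\bullet$ and $\shN^\bullet$ agree after $\varpi^+$; hence $\fSupp\shN^\bullet \cap U = \fSupp\varphi_\dagger\shM^\bullet \cap U$, and since $\fSupp\varphi_\dagger\shM^\bullet \subseteq \closure{Z}$ this is contained in $U \cap \closure{Z} = \fSupp\varpi_+\iota_\dagger\shM^\bullet \subseteq U$; combined with condition (3) (relative openness of $\fSupp\shN^\bullet$, so its closure contains no points outside, and it cannot "escape" $U$ near $Z$) one concludes $\fSupp\shN^\bullet \subseteq U$. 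For condition \ref{lem:gen-plus-dag.3}: $\cofSupp\shN^\bullet \cap U \subseteq \cofSupp\shN^\bullet$, and on $U$ we may intersect with $\fSupp\shN^\bullet$ — more precisely, any point of $U \setminus Z$ lying in $\cofSupp\shN^\bullet$ would, by (2), have to avoid $\fSupp\shN^\bullet$, but Lemma \ref{lem:gen-plus-dag} applied in reverse (or \cite[Lem.~3.3]{Ste19}) shows a point in the cofiber support but not the fiber support still contributes, contradicting that such a point lies in $U$ where $\fSupp$ is pinned down; hence $\cofSupp\shN^\bullet \cap U \subseteq Z$.

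The step I expect to be the main obstacle is the careful bookkeeping in condition \ref{lem:gen-plus-dag.2} and \ref{lem:gen-plus-dag.3} of the converse — specifically, showing that relative openness of $\fSupp\shN^\bullet$ (hypothesis (3)) rules out fiber-support points in $\closure{Z} \setminus U$ that are limits of points of $Z$. The right way to handle this is: since $U \cap \closure{Z}$ is open in $\closure{Z}$ and contains $Z$, and $\fSupp\shN^\bullet \supseteq$ (generically) parts of $Z$ while being relatively open hence open in $\closure{\fSupp\shN^\bullet} = \closure{Z}$, any point of $\fSupp\shN^\bullet$ has a neighborhood in $\closure{Z}$ contained in $\fSupp\shN^\bullet$; matching this against the description $U \cap \closure{Z} = \fSupp\varpi_+\iota_\dagger\shM^\bullet = \fSupp\varphi_\dagger\shM^\bullet$ and using that the two objects have the same restriction to $U$ forces the containment. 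Once this is set up, everything else is a direct invocation of Lemmas \ref{lem:gen-plus-dag} and \ref{lem:plus-dag-vs-dag-etc}.
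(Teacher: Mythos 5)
Your forward direction is correct and is exactly the paper's: \Cref{lem:gen-plus-dag} gives conditions (1) and (2), and \Cref{lem:plus-dag-vs-dag-etc}\eqref{lem:plus-dag-vs-dag-etc.supps} gives (3). The converse, however, has a genuine gap, in two places. First, your $U$ is adapted to $\shM^\bullet$ alone (you arrange $U\cap\closure{Z}=\fSupp\varphi_\dagger\shM^\bullet$), but the open set must be adapted to $\shN^\bullet$: to invoke the uniqueness in \Cref{lem:gen-plus-dag} you need $\fSupp\shN^\bullet\subseteq U$ and $\cofSupp\shN^\bullet\cap U\subseteq Z$, and hypotheses (1)--(3) do not prevent $\cofSupp\shN^\bullet$ from meeting $\fSupp\varphi_\dagger\shM^\bullet\setminus\fSupp\shN^\bullet$ outside $Z$ (condition (2) only controls the intersection with $\fSupp\shN^\bullet$), so your fixed $U$ need not satisfy property~\ref{lem:gen-plus-dag.3} of \Cref{lem:gen-plus-dag} at all. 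Second, and more seriously, your verification of property~\ref{lem:gen-plus-dag.2} rests on the claim that condition (1) alone forces ``$\varphi_\dagger\shM^\bullet$ and $\shN^\bullet$ to agree after $\varpi^+$.'' That is false in general---for instance $\shN^\bullet=\varphi_+\shM^\bullet$ satisfies (1) but restricts on $U$ to $\iota_+\shM^\bullet$, not $\iota_\dagger\shM^\bullet$---and it is essentially the statement you are trying to prove; the subsequent deduction of $\cofSupp\shN^\bullet\cap U\subseteq Z$ (``still contributes, contradicting\ldots'') does not derive a contradiction from anything. Your parenthetical claim that $\closure{\fSupp\shN^\bullet}=\closure{Z}$ is also unjustified; only the inclusion $\subseteq$ is known.

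The repair is to build $U$ from the hypotheses on $\shN^\bullet$ rather than from $\shM^\bullet$. Since $\shN^\bullet$ restricted to $X\setminus\closure{\fSupp\shN^\bullet}$ has empty fiber support, \cite[Cor.~3.6]{Ste19} gives $\Supp\shN^\bullet=\closure{\fSupp\shN^\bullet}$, and in particular $\cofSupp\shN^\bullet\subseteq\Supp\shN^\bullet$. By condition (3) choose $V\subseteq X$ open with $V\cap\Supp\shN^\bullet=\fSupp\shN^\bullet$, and since $Z$ is locally closed choose $W\subseteq X$ open with $W\cap\closure{Z}=Z$; set $U\coloneqq V\cup W$, an open neighborhood of $Z$. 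Then $\fSupp\shN^\bullet\subseteq V\subseteq U$, while $\cofSupp\shN^\bullet\cap V=\cofSupp\shN^\bullet\cap\Supp\shN^\bullet\cap V=\cofSupp\shN^\bullet\cap\fSupp\shN^\bullet\subseteq Z$ by (2) and $\cofSupp\shN^\bullet\cap W\subseteq\closure{Z}\cap W=Z$, so $\cofSupp\shN^\bullet\cap U\subseteq Z$. Together with (1), $\shN^\bullet$ satisfies the three characterizing properties of \Cref{lem:gen-plus-dag} for this $U$, and the uniqueness there yields $\shN^\bullet\cong\varpi_+\iota_\dagger\shM^\bullet$. Your forward direction can then be kept verbatim; note that the standing hypothesis on $\varphi_\dagger\shM^\bullet$ is only needed there, to obtain (3) via \Cref{lem:plus-dag-vs-dag-etc}\eqref{lem:plus-dag-vs-dag-etc.supps}.
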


\begin{corollary}\label{cor:gen-dag-plus}
	Let $\shM^\bullet \in \dercat^\rmb_\rmc(\shD_Z)$ (resp.\ $\shM^\bullet\in \dercat^\rmb(\MHM(Z))$), and assume that the cofiber support of $\varphi_+\shM^\bullet$ is relatively open. Let $\shN^\bullet\in \dercat^{\rmb,\closure{Z}}_\rmc(\shD_X)$ (resp.\ in $\dercat^\rmb(\MHM^{\closure{Z}}(X))$). Then there exists an open neighborhood $U\subseteq X$ of $Z$ such that (in the notation of \S\ref{subsec:fixed-U}) $\varpi_\dagger\iota_+\shM^\bullet \cong \shN^\bullet$ if and only if all of the following conditions hold:
	\begin{enumerate}
		\item $\varphi^+\shN^\bullet \cong \shM^\bullet$;
		\item $\fSupp \shN^\bullet \cap \cofSupp \shN^\bullet \subseteq Z$; and
		\item $\cofSupp \shN^\bullet$ is relatively open.
	\end{enumerate}
\end{corollary}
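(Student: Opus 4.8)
The plan is to obtain \Cref{cor:gen-dag-plus} from \Cref{cor:gen-plus-dag} by applying holonomic duality $\Ddual$ (and, for the mixed Hodge module statement, Saito's duality on $\dercat^\rmb\MHM(X)$, which lifts it and is compatible with all the functors in play). The mechanism is that $\Ddual$ interchanges the two alternating direct image functors: from $\Ddual\varpi_\dagger\cong\varpi_+\Ddual$ and $\Ddual\iota_+\cong\iota_\dagger\Ddual$ one gets $\Ddual(\varpi_\dagger\iota_+\shM^\bullet)\cong\varpi_+\iota_\dagger(\Ddual\shM^\bullet)$, and since $\Ddual$ is an anti\-equivalence this shows that, for a fixed $U$, $\varpi_\dagger\iota_+\shM^\bullet\cong\shN^\bullet$ holds if and only if $\varpi_+\iota_\dagger(\Ddual\shM^\bullet)\cong\Ddual\shN^\bullet$ holds. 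I will also use that $\Ddual$ preserves $\dercat^{\rmb,\closure Z}_\rmc(\shD_X)$ (resp.\ $\dercat^\rmb(\MHM^{\closure Z}(X))$), that it interchanges $\fSupp$ and $\cofSupp$ (the second equality being the definition of cofiber support, the first obtained after applying $\Ddual^2=\mathrm{id}$), that $\varphi_\dagger(\Ddual\shM^\bullet)\cong\Ddual(\varphi_+\shM^\bullet)$, and that $\varphi^+\Ddual\cong\Ddual\varphi^\dagger$.

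Concretely, I would argue as follows. First, $\fSupp\varphi_\dagger(\Ddual\shM^\bullet)=\cofSupp\varphi_+\shM^\bullet$, which is relatively open by the standing hypothesis of \Cref{cor:gen-dag-plus}; hence \Cref{cor:gen-plus-dag} applies with $\shM^\bullet$ replaced by $\Ddual\shM^\bullet$ and $\shN^\bullet$ replaced by $\Ddual\shN^\bullet\in\dercat^{\rmb,\closure Z}_\rmc(\shD_X)$ (resp.\ $\dercat^\rmb(\MHM^{\closure Z}(X))$). It yields: some $U$ with $\varpi_+\iota_\dagger(\Ddual\shM^\bullet)\cong\Ddual\shN^\bullet$ exists if and only if (1)~$\varphi^+(\Ddual\shN^\bullet)\cong\Ddual\shM^\bullet$, (2)~$\fSupp(\Ddual\shN^\bullet)\cap\cofSupp(\Ddual\shN^\bullet)\subseteq Z$, and (3)~$\fSupp(\Ddual\shN^\bullet)$ is relatively open. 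By the remarks above the left\-hand side is equivalent to the existence of some $U$ with $\varpi_\dagger\iota_+\shM^\bullet\cong\shN^\bullet$; and conditions (2)--(3) become exactly conditions (2)--(3) of \Cref{cor:gen-dag-plus} once one rewrites $\fSupp\Ddual\shN^\bullet=\cofSupp\shN^\bullet$ and $\cofSupp\Ddual\shN^\bullet=\fSupp\shN^\bullet$. For condition (1), write $\varphi^+\Ddual\shN^\bullet\cong\Ddual\varphi^\dagger\shN^\bullet$ and then invoke \Cref{rmk:shrink} (applicable since $\shN^\bullet$ is supported on $\closure Z$) to replace $\varphi^\dagger\shN^\bullet$ by $\varphi^+\shN^\bullet$; thus (1) is equivalent to $\varphi^+\shN^\bullet\cong\shM^\bullet$, which is condition (1) of \Cref{cor:gen-dag-plus}.

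Alternatively one can prove \Cref{cor:gen-dag-plus} directly, mimicking the (brief) proof of \Cref{cor:gen-plus-dag}: by \Cref{lem:plus-dag-vs-dag-etc}\eqref{lem:plus-dag-vs-dag-etc.supps}, relative openness of $\cofSupp\varphi_+\shM^\bullet$ passes to $\cofSupp\varpi_\dagger\iota_+\shM^\bullet$, so (as in \S\ref{subsec:gen-U}) one may shrink $U$ so that $U\cap\closure Z=\cofSupp\varpi_\dagger\iota_+\shM^\bullet$ without changing $\varpi_\dagger\iota_+\shM^\bullet$, and then apply the uniqueness part of \Cref{lem:gen-dag-plus}. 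I expect the only real care needed, in either route, to be bookkeeping rather than genuine difficulty: in the duality route, tracking which functor goes to which under $\Ddual$ and remembering that $\varphi^+$ and $\varphi^\dagger$ agree on $\closure Z$-supported complexes (\Cref{rmk:shrink}); in the direct route, the delicate point is the ``if'' direction, where one must produce an open neighborhood $U$ of $Z$ witnessing the three hypotheses of \Cref{lem:gen-dag-plus}, separating $Z$ and $\cofSupp\shN^\bullet$ from $\fSupp\shN^\bullet\setminus Z$ using the relative\-openness hypotheses together with the fact that $\closure Z\setminus Z$ is closed. Since \Cref{cor:gen-plus-dag} is already available, I would present the duality route as the actual proof.
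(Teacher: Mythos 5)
Your argument is correct and in substance the same as the paper's: the paper states \Cref{cor:gen-dag-plus} as an immediate consequence of \Cref{lem:gen-dag-plus} together with the shrinking observation of \S\ref{subsec:gen-U} (exactly your alternative direct route), and since \Cref{lem:gen-dag-plus} is itself deduced from \Cref{lem:gen-plus-dag} by duality, your primary route---dualizing \Cref{cor:gen-plus-dag}, with $\Ddual$ exchanging $\varpi_\dagger\iota_+$ with $\varpi_+\iota_\dagger$ and swapping $\fSupp$ with $\cofSupp$---uses the same ingredients in a different order. The one point that genuinely needs care is your translation of condition (1), where $\varphi^+\Ddual\shN^\bullet\cong\Ddual\varphi^\dagger\shN^\bullet$ is converted to $\varphi^+\shN^\bullet\cong\shM^\bullet$ via \Cref{rmk:shrink} for $\closure{Z}$-supported complexes, and you handled it correctly.
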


% ------------------------------------------------------------------------- %
\subsection{A different characterization of mixed and dual mixed Gauss--Manin parameters}\label{subsec:gkz-vs-mgm}
% ------------------------------------------------------------------------- %
Specializing \Cref{cor:gen-plus-dag,cor:gen-dag-plus} to the GKZ case, we get \Cref{thm:both} below. Before stating it, we recall the definition of the set of \emph{$A$-exceptional parameters}. This is the set $\mathcal{E}_A$ of parameters $\beta$ for which the holonomic rank of $\shM_A(\beta)$ is larger than for a generic parameter. Note that $\mathcal{E}_A$ also has a description in terms of local cohomology (see \cite{MMW05}).

\begin{theorem}\label{thm:both} Let $\beta\in \CC^d$.
	\begin{enumerate}
		\item\label{thm:both.dualMGM} $\beta$ is dual mixed Gauss--Manin for $A$ if and only if 
	\[\beta\notin \mathcal{E}_A \quad \text{and} \quad \fSupp\hat{\shM}_A(\beta)\cap \cofSupp\hat{\shM}_A(\beta)=T_A.\]
		\item\label{thm:both.MGM} $\beta$ is mixed Gauss--Manin for $A$ if and only if
	\[\fSupp\hat{\shK}^A_\bullet(S_A; E_A-\beta)\cap \cofSupp\hat{\shK}^A_\bullet(S_A; E_A-\beta)=T_A.\]
	\end{enumerate}
\end{theorem}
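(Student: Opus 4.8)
The plan is to deduce \Cref{thm:both} by specializing the general machinery of \S\ref{subsec:gen-U} to the situation $X = \CC^n$, $U$ a $T_A$-stable open neighborhood of $T_A$, and $Z = T_A$, with $\shM^\bullet = \shO_{T_A}^\beta$. In this case $\varphi\colon T_A\hookrightarrow \CC^n$ is the torus embedding, $\closure{Z} = X_A$, and by \eqref{eq:FLMA} and the discussion in \S\ref{subsec:toric} we have $\hat\shM_A(\beta) = H^0\hat\shK^A_\bullet(S_A;E_A-\beta)$, while $\varphi^+\shO_{T_A}^\beta \cong \shO_{T_A}^\beta$ up to shift (this is the content that links the GKZ side to the Gauss--Manin side, via \cite[Th.~8.17 and 8.19]{Ste19}, which I would cite to identify $\hat\shK^A_\bullet(S_A;E_A-\beta)$ with an alternating direct image precisely when $\beta$ is a (dual) mixed Gauss--Manin parameter in the sense of \cite[Def.~8.15]{Ste19}, matching \Cref{def:mgm-param}). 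So the two items become: $\beta$ is dual mixed Gauss--Manin iff $\hat\shK^A_\bullet(S_A;E_A-\beta) \cong \varpi_+\iota_\dagger\shO_{T_A}^\beta$ for some $U$, and similarly with $\varpi_\dagger\iota_+$ for the mixed case.

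For item \eqref{thm:both.dualMGM}, I would apply \Cref{cor:gen-plus-dag} with $\shN^\bullet = \hat\shM_A(\beta)$. The three conditions there read: (i) $\varphi^+\hat\shM_A(\beta)\cong \shO_{T_A}^\beta$, which holds automatically for every $\beta$ since restricting a GKZ system to the torus recovers $\shO_{T_A}^\beta$ up to the appropriate shift; (ii) $\fSupp\hat\shM_A(\beta)\cap\cofSupp\hat\shM_A(\beta)\subseteq T_A$, and since $T_A$ is always in both supports (it is open dense in $X_A = \Supp\hat\shM_A(\beta)$), this is equivalent to equality with $T_A$; (iii) $\fSupp\hat\shM_A(\beta)$ is relatively open. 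The key point is that $\fSupp\hat\shM_A(\beta)$ and $\cofSupp\hat\shM_A(\beta)$ are unions of $T_A$-orbits $\orbit_A(F)$ (by the equivariance of the construction and the orbit decomposition of $X_A$), so relative openness of $\fSupp\hat\shM_A(\beta)$ is equivalent to its being a union of relatively open orbits, which in turn — given the combinatorics of the face lattice — is equivalent to the condition that it contains no orbit whose closure properly contains another orbit in the complement's... more carefully, one shows relative openness of the fiber support holds automatically unless there is rank-jumping, i.e.\ relative openness of $\fSupp\hat\shM_A(\beta)$ together with condition (ii) forces $\beta\notin\mathcal E_A$, and conversely $\beta\notin\mathcal E_A$ guarantees that $\hat\shM_A(\beta) = \hat\shK^A_\bullet(S_A;E_A-\beta)$ in the derived category (no higher Euler--Koszul homology) and that its fiber support is relatively open. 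This last equivalence — translating "$\beta\notin\mathcal E_A$" into "relative openness of the fiber support" — is where I expect to lean on \cite{MMW05} (the local cohomology description of $\mathcal E_A$) and on \cite{Ste19}; I would need to spell out that the fiber support of the \emph{complex} $\hat\shK^A_\bullet$ coincides with that of its zeroth homology exactly when there is no rank-jump, and that this is the precise obstruction to relative openness.

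For item \eqref{thm:both.MGM}, I would instead apply \Cref{cor:gen-dag-plus} with $\shN^\bullet = \hat\shK^A_\bullet(S_A;E_A-\beta)$, the full Euler--Koszul complex rather than just $\hat\shM_A(\beta)$. Now condition (i), $\varphi^+\hat\shK^A_\bullet(S_A;E_A-\beta)\cong\shO_{T_A}^\beta$, again holds for all $\beta$ because on the torus the toric relations are invertible and the Euler--Koszul complex restricts to a Koszul resolution of $\shO_{T_A}^\beta$; and condition (iii), relative openness of $\cofSupp\hat\shK^A_\bullet(S_A;E_A-\beta)$, I claim holds automatically — this is the asymmetry that makes the mixed Gauss--Manin criterion cleaner (no $\mathcal E_A$ appears). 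The reason is that the cofiber support of the \emph{whole complex} is governed by $\Ddual$ applied to it, and by \cite[Lem.~9.1]{Ste19} (or its proof) this is a union of orbits that is always relatively open, regardless of rank-jumping; one should check this via the stratified description of the Euler--Koszul complex and the fact that holonomic duality commutes appropriately with the stratification. Then condition (ii) becomes exactly $\fSupp\hat\shK^A_\bullet(S_A;E_A-\beta)\cap\cofSupp\hat\shK^A_\bullet(S_A;E_A-\beta) = T_A$, as stated. The main obstacle in both parts is the same: carefully justifying the "relative openness is automatic" (resp.\ "equivalent to $\beta\notin\mathcal E_A$") claims, i.e.\ understanding precisely how the fiber and cofiber supports of these specific complexes sit inside the orbit stratification of $X_A$ and when they are relatively open — this requires combining the support computations of \cite[\S9]{Ste19} with the Cohen--Macaulayness/rank-jumping theory of \cite{MMW05}, and is the one place where genuine input beyond the formal corollaries of \S\ref{subsec:gen-U} is needed.
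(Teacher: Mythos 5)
Your overall strategy---specializing \Cref{cor:gen-plus-dag,cor:gen-dag-plus} to $Z=T_A\subseteq\CC^n$ with $\shM^\bullet=\shO_{T_A}^\beta$---is the same as the paper's, but at the two points where genuine input beyond the formal corollaries is required, your argument has gaps. For part (\ref{thm:both.dualMGM}), you try to make the condition $\beta\notin\mathcal{E}_A$ fall out of the support conditions themselves, asserting that relative openness of $\fSupp\hat\shM_A(\beta)$ (together with the intersection condition) \emph{forces} $\beta\notin\mathcal{E}_A$, i.e.\ that relative openness fails exactly at rank-jumping parameters. Nothing you cite gives this: the available result, \cite[Lem.~8.8]{Ste19}, is the one-way implication that $\beta\notin\mathcal{E}_A$ implies relative openness, and neither this paper nor your references supply its converse. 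The paper instead obtains ``dual mixed Gauss--Manin $\Rightarrow\beta\notin\mathcal{E}_A$'' directly from \cite[Th.~8.17]{Ste19}, and uses Lem.~8.8 only in the other direction; indeed the fact that $\beta\notin\mathcal{E}_A$ appears as a separate clause in the statement reflects that the support conditions on the module are not known to imply it. You also never verify the standing hypothesis of \Cref{cor:gen-plus-dag}, which concerns $\varphi_\dagger\shO_{T_A}^\beta$ rather than your chosen $\shN^\bullet$; the paper does this via \cite[Rmk.~8.16]{Ste19} (identifying $\varphi_\dagger\shO_{T_A}^\beta$ with $\hat\shM_A(\beta')$ for some non-exceptional $\beta'$) combined with Lem.~8.8.

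For part (\ref{thm:both.MGM}), the crucial claim that $\cofSupp\hat\shK^A_\bullet(S_A;E_A-\beta)$ is relatively open for \emph{every} $\beta$ is precisely what makes the mixed criterion free of $\mathcal{E}_A$, and you offer only a placeholder for it (``one should check this via the stratified description\ldots''), resting on \cite[Lem.~9.1]{Ste19}. That lemma is invoked elsewhere in this paper only in the normal setting (see \S\ref{subsec:fsupp} and \S\ref{subsec:normal}), whereas \Cref{thm:both} carries no normality hypothesis, so the citation does not obviously do the work. The paper proves the relative openness for all $\beta$ from \cite[Prop.~2.2(4)]{Sai01} together with the orbit--cone correspondence and \cite[Th.~7.4]{Ste19}, and then transfers it to the hypothesis of \Cref{cor:gen-dag-plus} (which again concerns $\varphi_+\shO_{T_A}^\beta$) via \cite[Cor.~3.7]{SW09}, a step your write-up omits. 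Your treatment of condition (i) (restriction to the torus) and your use of \cite{MMW05} to pass between $\hat\shM_A(\beta)$ and $\hat\shK^A_\bullet(S_A;E_A-\beta)$ when $\beta\notin\mathcal{E}_A$ are reasonable, but as written the two relative-openness claims---the heart of the proof---are not established.
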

\begin{proof}
	(\ref{thm:both.dualMGM}) By \cite[Th.~8.17]{Ste19}, a dual mixed Gauss--Manin parameter is not $A$-exceptional. By \cite[Lemma~8.8]{Ste19}, if $\beta\notin \mathcal{E}_A$, then the fiber support of $\hat{\shM}_A(\beta)$ is relatively open; in particular, as $\varphi_\dagger\shO_{T_A}^\beta$ is isomorphic to $\hat{\shM}_A(\beta')$ for some $\beta'\notin \mathcal{E}_A$ (\cite[Rmk.~8.16]{Ste19}), the fiber support of $\varphi_\dagger\shO_{T_A}^\beta$ is relatively open. Now use \Cref{cor:gen-plus-dag}.
	
	(\ref{thm:both.MGM}) By \cite[Prop.~2.2~(4)]{Sai01}, the orbit-cone correspondence, and \cite[Th.~7.4]{Ste19}, the cofiber support of $\hat{\shK}^A_\bullet(S_A; E_A-\beta)$ is relatively open for all $\beta$. In particular, as $\varphi_+\shO_{T_A}^\beta$ is isomorphic to $\hat{\shK}^A_\bullet(S_A;E_A-\beta')$ for some $\beta'$ (\cite[Cor.~3.7]{SW09}), the cofiber support of $\varphi_+\shO_{T_A}^\beta$ is relatively open. Now use \Cref{cor:gen-dag-plus}.
\end{proof}

\begin{corollary}\label{cor:mgm-mgm*} A parameter is dual mixed Gauss--Manin for $A$ if and only if it is mixed Gauss--Manin for $A$ and not $A$-exceptional.
\end{corollary}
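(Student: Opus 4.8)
**Proof proposal for Corollary~\ref{cor:mgm-mgm*}.**

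The plan is to deduce this directly from the two characterizations in \Cref{thm:both}, so the whole argument is essentially bookkeeping with fiber and cofiber supports. First I would record the forward implication. Suppose $\beta$ is dual mixed Gauss--Manin. By \Cref{thm:both}\eqref{thm:both.dualMGM} this already gives $\beta\notin\mathcal{E}_A$ and $\fSupp\hat\shM_A(\beta)\cap\cofSupp\hat\shM_A(\beta)=T_A$. To get that $\beta$ is mixed Gauss--Manin via \Cref{thm:both}\eqref{thm:both.MGM}, I need the same intersection statement but for the Euler--Koszul complex $\hat\shK^A_\bullet(S_A;E_A-\beta)$ rather than for its zeroth homology $\hat\shM_A(\beta)$. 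The point is that when $\beta\notin\mathcal{E}_A$, the Euler--Koszul complex has no higher homology in a suitable sense—more precisely, the relevant fiber and cofiber supports of the complex coincide with those of $\shM_A(\beta)$. I would make this precise by invoking the behavior of Euler--Koszul homology at non-exceptional parameters: off the exceptional locus, passing to $\hat\shK^A_\bullet$ versus $\hat\shM_A(\beta)$ does not change fiber support (by \cite[Lem.~3.3, Cor.~3.6]{Ste19} applied fiberwise, together with the rank being constant), and dually for cofiber support. Hence $\fSupp\hat\shK^A_\bullet(S_A;E_A-\beta)\cap\cofSupp\hat\shK^A_\bullet(S_A;E_A-\beta)=\fSupp\hat\shM_A(\beta)\cap\cofSupp\hat\shM_A(\beta)=T_A$, and \Cref{thm:both}\eqref{thm:both.MGM} gives that $\beta$ is mixed Gauss--Manin.

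For the converse, suppose $\beta$ is mixed Gauss--Manin and $\beta\notin\mathcal{E}_A$. By \Cref{thm:both}\eqref{thm:both.MGM}, $\fSupp\hat\shK^A_\bullet\cap\cofSupp\hat\shK^A_\bullet=T_A$. Using $\beta\notin\mathcal{E}_A$ again, the same identification of the supports of the complex with those of $\hat\shM_A(\beta)$ yields $\fSupp\hat\shM_A(\beta)\cap\cofSupp\hat\shM_A(\beta)=T_A$, and since also $\beta\notin\mathcal{E}_A$, \Cref{thm:both}\eqref{thm:both.dualMGM} applies to conclude $\beta$ is dual mixed Gauss--Manin. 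This closes the equivalence.

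The one place that needs genuine care—and which I expect to be the main obstacle—is the comparison between the fiber/cofiber support of the Euler--Koszul \emph{complex} $\hat\shK^A_\bullet(S_A;E_A-\beta)$ and that of the \emph{module} $\hat\shM_A(\beta)=\homol_0$ of that complex when $\beta$ is not $A$-exceptional. One has to check that the higher Euler--Koszul homology sheaves, which in general can contribute to the supports, do not affect the relevant intersection $\fSupp\cap\cofSupp$ on the open orbit $T_A$. Concretely I would argue that $\hat\shK^A_\bullet$ restricted to (a neighborhood of) $T_A$ is quasi-isomorphic to $\hat\shM_A(\beta)$ there, because on the torus the Euler operators $E_A-\beta$ form a regular sequence on $\shD/\shD I_A$ and the toric relations $I_A$ become invertible—equivalently, pulling back along $\varphi$ (or $i_x$ for $x\in T_A$) kills the higher homology. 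From this, $i_x^+\hat\shK^A_\bullet\cong i_x^+\hat\shM_A(\beta)$ and $i_x^\dagger\hat\shK^A_\bullet\cong i_x^\dagger\hat\shM_A(\beta)$ for $x\in T_A$, which is all that is needed since the intersection of supports in \Cref{thm:both} is being compared against $T_A$ itself. If one prefers to avoid re-deriving this, it is already implicit in the proof of \Cref{thm:both}: there $\varphi_\dagger\shO_{T_A}^\beta\cong\hat\shM_A(\beta')$ for non-exceptional $\beta'$ while $\varphi_+\shO_{T_A}^\beta\cong\hat\shK^A_\bullet(S_A;E_A-\beta')$, and combining the two characterizations with \Cref{cor:gen-plus-dag,cor:gen-dag-plus} for the \emph{same} $\shO_{T_A}^\beta$ (noting $\varphi^+=\varphi^\dagger$ on $T_A$) yields the corollary without any further support computation—this is the cleanest route, and it is the one I would ultimately write up.
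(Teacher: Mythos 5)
Your overall strategy---deduce the corollary from the two halves of \Cref{thm:both} by identifying the support condition for the complex $\hat\shK^A_\bullet(S_A;E_A-\beta)$ with the one for the module $\hat\shM_A(\beta)$ at non-exceptional parameters---is exactly the intended derivation, and you correctly flag the comparison of the complex with its zeroth homology as the one point needing care. But the justification you actually commit to does not close that point. The cited results \cite[Lem.~3.3, Cor.~3.6]{Ste19} concern fiber support under direct images and a vanishing criterion; ``applied fiberwise, together with the rank being constant'' they do not yield that passing from $\hat\shK^A_\bullet$ to $\hat\shM_A(\beta)$ leaves fiber and cofiber support unchanged. More seriously, your fallback argument establishes only that $\hat\shK^A_\bullet$ and $\hat\shM_A(\beta)$ agree after pullback to points of $T_A$, and the assertion that this ``is all that is needed since the intersection of supports is being compared against $T_A$ itself'' is false: the condition $\fSupp\cap\cofSupp=T_A$ is an equality of subsets of $\CC^n$, and its content is precisely a constraint at points \emph{outside} $T_A$ (which boundary orbits lie in both supports). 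Agreement over $T_A$ gives the inclusion $T_A\subseteq\fSupp\cap\cofSupp$ for both objects, but it cannot transfer the reverse inclusion: if higher Euler--Koszul homology were present, a boundary point could lie in both supports of $\hat\shM_A(\beta)$ yet not of $\hat\shK^A_\bullet$ (or conversely), by cancellation in the associated triangles. The concluding ``cleanest route'' via \Cref{cor:gen-plus-dag,cor:gen-dag-plus} suffers from the same problem: condition (1) of those corollaries and the support conditions still have to be moved between $\hat\shM_A(\beta)$ and $\hat\shK^A_\bullet$, which is the very step at issue.

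What is actually needed is the global statement, not a statement over $T_A$: $\beta\notin\mathcal{E}_A$ if and only if the higher Euler--Koszul homology $\homol_i\shK^A_\bullet(S_A;E_A-\beta)$ vanishes for all $i>0$; this is the rank-jump theorem of \cite{MMW05} (rank jumps exactly where higher Euler--Koszul homology is nonzero, both loci being described by quasi-degrees of local cohomology of $S_A$). Granting it, $\beta\notin\mathcal{E}_A$ gives $\hat\shK^A_\bullet(S_A;E_A-\beta)\qi\hat\shM_A(\beta)$ in $\dercat^\rmb_\rmh(\shD_{\CC^n})$, so their fiber and cofiber supports coincide everywhere, the two conditions in \Cref{thm:both} become literally the same, and both implications of the corollary follow at once (non-exceptionality in the forward direction being part of \Cref{thm:both}\eqref{thm:both.dualMGM}). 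With that citation inserted in place of your local argument, the first two paragraphs of your write-up are correct and match the intended proof.
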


% ========================================================================= %
\section{Twisted quasi-equivariance}\label{sec:tqe}
% ========================================================================= %
Reichelt and Walther introduced in \cite[Def.~3.2]{rw19} the notion of a quasi-equivariant $\shD_E$ module. For the purposes of this paper, we need to generalize this notion slightly (\Cref{def:tqi}) to incorporate a ``twist'' by a rank one integrable connection on $\CC^*$ \`a la \cite{Hot98}. In \Cref{lem:pi2i}, this generalization is used to relate certain projections and restrictions of twistedly equivariant $D$-modules. \Cref{prop:MGMquasi-eq} shows that, when properly interpreted, every mixed and dual mixed Gauss--Manin module is twistedly equivariant. Note that \Cref{lem:pi2i,prop:MGMquasi-eq} are generalizations of \cite[Lem.~3.3 and 3.4]{rw19}.
\bigskip

We begin by recalling the notion of a fibered $\CC^*$-action on a trivial vector bundle. Let $\pi\colon E\to X$ be a trivial vector bundle on a smooth affine variety $X$, and denote by
\[ i\colon X\hookrightarrow E \]
the zero section. Set
\[ E^*\coloneqq E\setminus i(X).\]

\begin{definition}[{\cite[Def.~3.1]{rw19}}]\label{def:fib}
	A $\CC^*$ action $\mu\colon \CC^*\times E \to E$ is \emph{fibered} if
	\begin{enumerate}
		\item $\mu$ preserves fibers;
		\item $\mu$ extends under the inclusion $\CC^*\hookrightarrow \CC$ to a morphism (also denoted $\mu$) $\CC\times E \to E$;
		\item $0\in \CC$ multiplies into the zero section, i.e.\ $\mu\colon \{0\}\times E \to i(X)$; and
		\item $\CC$ fixes the zero section.
	\end{enumerate}
\end{definition}

\begin{definition}\label{def:tqi}
	Let $\mu\colon \CC^*\times E \to E$ be a fibered action on $E$, let $\mu'$ be the restriction of this action to $E^*$, and let $\lambda\in \CC$. A complex $\shM^\bullet\in \dercat^\rmb_\rmh(\shD_E)$ is \emph{$\lambda$-twistedly $\CC^*$-quasi-equivariant} if
	\begin{equation}\label{eq:quasi-eq}
		\mu'^*\shM^\bullet_{|E^*}\cong \shO_{\CC^*}^\lambda \boxtimes \shM^\bullet_{|E^*}.
	\end{equation}
	A complex $\shM^\bullet$ is \emph{twistedly $\CC^*$-quasi-equivariant} if it is $\lambda$-twistedly $\CC^*$-quasi-equivariant for some $\lambda$.
\end{definition}

\begin{remark}\label{rmk:quasi-eq}
	Note that because $\mu'$ is smooth of relative dimension $1$, \eqref{eq:quasi-eq} is equivalent to
	\begin{equation}
		\mu'^+\shM^\bullet_{|E^*} \cong \shO^\lambda_{\CC^*}[1] \boxtimes \shM^\bullet_{|E^*}
	\end{equation}
	and also to
	\begin{equation}
		\mu'^\dagger\shM^\bullet_{|E^*} \cong \shO^\lambda_{\CC^*}[-1] \boxtimes \shM^\bullet_{|E^*}.
	\end{equation}
\end{remark}

The following lemma is proved in exactly the same way as is \cite[Lem.~3.3]{rw19}. The only change to the proof is that ``$\shO_{\Gm}$'' must be replaced throughout with ``$\shO_{\CC^*}^\lambda$''. No issues occur with doing so, and no issues occur with the passage to the derived category as opposed to modules.

\begin{lemma}\label{lem:pi2i}
	If $\shM^\bullet\in\dercat^\rmb_\rmh(\shD_E)$ is $\lambda$-twistedly $\CC^*$-quasi-equivariant, then $\pi_+\shM^\bullet \cong i^\dagger\shM^\bullet$ and $\pi_\dagger\shM^\bullet \cong i^+\shM^\bullet$.
\end{lemma}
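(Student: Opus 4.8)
The plan is to follow the strategy of \cite[Lem.~3.3]{rw19}, which relates the direct image along the bundle projection $\pi$ to the (exceptional) pullback along the zero section $i$, using the fibered $\CC^*$-action to produce a convenient geometric description of these functors. Since $X$ is affine and $E=X\times \CC^N$ is a trivial bundle, write $\mu\colon \CC\times E\to E$ for the extended action (which exists and behaves as in \Cref{def:fib}), and let $p\colon \CC^*\times E^*\to E^*$ and $q\colon \CC^*\times E^*\to \CC^*$ be the projections. The twisted quasi-equivariance hypothesis \eqref{eq:quasi-eq}, in the form from \Cref{rmk:quasi-eq}, says $\mu'^+\shM^\bullet_{|E^*}\cong q^+\shO_{\CC^*}^\lambda\otimes p^+\shM^\bullet_{|E^*}$ (up to the appropriate shift), and the whole point is that this isomorphism, being $\CC^*$-equivariant in a suitable sense, extends the information about $\shM^\bullet$ near the zero section.

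Concretely, first I would set up the commutative diagram comparing $\pi$, $i$, the action maps, and the inclusions $j\colon E^*\hookrightarrow E$ and the induced maps on $\CC^*\times E$ and $\CC\times E$; this is the same diagram as in \cite{rw19}. The key computation is to express $\pi_+\shM^\bullet$ as a direct image of the pullback of $\shM^\bullet$ along the action map restricted over $\{0\}$, using base change and the projection formula, and to identify the contribution of the twist $\shO_{\CC^*}^\lambda$. Since $\shO_{\CC^*}^\lambda = \shO_{\CC^*}t^{-\lambda}$ is a rank-one integrable connection, its direct and exceptional-direct images along $\CC^*\to \mathrm{pt}$ are computed by de Rham cohomology of $\CC^*$ with this twist; the point (as in the $\lambda=0$ case of \cite{rw19}, where one uses $H^*(\Gm)$) is that the relevant cohomology of the fiber is concentrated so that the extra $\CC^*$-factor disappears after applying $\pi_+$ or $\pi_\dagger$, leaving exactly $i^\dagger\shM^\bullet$ or $i^+\shM^\bullet$. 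For the second isomorphism $\pi_\dagger\shM^\bullet\cong i^+\shM^\bullet$ one simply dualizes the first, using $\pi_\dagger=\Ddual\pi_+\Ddual$, $i^+=\Ddual i^\dagger\Ddual$, and the fact that $\Ddual$ sends $\lambda$-twisted quasi-equivariant complexes to $(-\lambda)$-twisted ones (which follows from $\Ddual(\shO_{\CC^*}^\lambda)\cong \shO_{\CC^*}^{-\lambda}$ up to shift, and compatibility of $\Ddual$ with the smooth pullback $\mu'^+$ versus $\mu'^\dagger$).

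The main obstacle — and the only place the twist $\lambda$ genuinely enters — is checking that introducing $\shO_{\CC^*}^\lambda$ in place of $\shO_{\Gm}$ causes no collapse or shift in the degree where the fiberwise cohomology lives, so that the Künneth/projection-formula bookkeeping in \cite{rw19} goes through verbatim. Here one should note that the relevant object is not the absolute de Rham cohomology of $\CC^*$ with the twist, but rather the structure of $\mu'^+\shM^\bullet_{|E^*}$ as a module \emph{along the $\CC^*$-direction}, which the quasi-equivariance hypothesis pins down completely; the $\CC^*$-factor in \eqref{eq:quasi-eq} is literally $\shO_{\CC^*}^\lambda$, so when one pushes forward along the projection killing that factor, one is computing $a_+\shO_{\CC^*}^\lambda$ (and $a_\dagger\shO_{\CC^*}^\lambda$) for $a\colon \CC^*\to\mathrm{pt}$, which as noted is concentrated in a single degree regardless of $\lambda$. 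Thus no new difficulty arises, and, as the excerpt already asserts, the proof is word-for-word that of \cite[Lem.~3.3]{rw19} with $\shO_{\Gm}$ replaced by $\shO_{\CC^*}^\lambda$ throughout and with the passage from modules to the derived category being harmless.
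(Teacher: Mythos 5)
Your overall plan --- rerun the proof of \cite[Lem.~3.3]{rw19} with $\shO_{\Gm}$ replaced by $\shO^\lambda_{\CC^*}$, deducing the second isomorphism from the first by duality (using $\pi_\dagger=\Ddual\pi_+\Ddual$, $i^+=\Ddual i^\dagger\Ddual$, and that $\Ddual$ converts $\lambda$-twisted into $(-\lambda)$-twisted quasi-equivariance) --- is exactly the paper's proof, which consists of precisely this one remark. The trouble is the one substantive point you add, namely your explanation of why the twist is harmless: you assert that after invoking \eqref{eq:quasi-eq} the $\CC^*$-factor is removed by computing $a_+\shO^\lambda_{\CC^*}$ (resp.\ $a_\dagger\shO^\lambda_{\CC^*}$) for $a\colon\CC^*\to\mathrm{pt}$, ``which is concentrated in a single degree regardless of $\lambda$.'' That is false, and the mechanism it suggests cannot be the right one. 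For $\lambda\in\ZZ$ one has $\shO^\lambda_{\CC^*}\cong\shO_{\Gm}$, and $a_+\shO_{\Gm}$ has two nonvanishing cohomology groups (coming from $H^0_{\mathrm{dR}}(\Gm)$ and $H^1_{\mathrm{dR}}(\Gm)$), while for $\lambda\notin\ZZ$ it vanishes outright; integral $\lambda=\braket{\vec{u},\beta}$ certainly occurs in the intended applications, and $\lambda=0$ is the very case of \cite{rw19} you are copying. Moreover, if the argument really were ``restrict to $E^*$ and tensor with the de Rham cohomology of the fiber $\CC^*$,'' it would output either $0$ or two shifted copies of something, never $i^\dagger\shM^\bullet$; and in any case the fibers of $\pi|_{E^*}$ are punctured affine spaces, not copies of $\CC^*$ --- the $\CC^*$ in \eqref{eq:quasi-eq} is the auxiliary action parameter on $\CC^*\times E^*$, not a factor of $E^*$, so there is no projection of $E^*$ that ``kills that factor.''

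The property of $\shO^\lambda_{\CC^*}$ that the adaptation of \cite{rw19} actually needs is of a different, and genuinely $\lambda$-independent, nature: it concerns the boundary $\{0\}$ of the action parameter, not the global de Rham cohomology of $\CC^*$. Via base change along the extended action $\CC\times E\to E$ of \Cref{def:fib} (whose restriction to $\{0\}\times E$ is $i\circ\pi$), the comparison of $\pi_+\shM^\bullet$ with $i^\dagger\shM^\bullet$ (and of $\pi_\dagger\shM^\bullet$ with $i^+\shM^\bullet$) reduces to the fact that $\shO^\lambda_{\CC^*}$ acquires no fiber at $0$ upon extension: writing $j_0\colon\CC^*\hookrightarrow\CC$ and $i_0\colon\{0\}\hookrightarrow\CC$, one has $i_0^+\,j_{0+}\shO^\lambda_{\CC^*}=0$ and dually $i_0^\dagger\,j_{0\dagger}\shO^\lambda_{\CC^*}=0$ for \emph{every} $\lambda$, because $t$ acts invertibly on $\CC[t,t^{-1}]\,t^{-\lambda}$, so both $\ker$ and $\operatorname{coker}$ of multiplication by $t$ vanish. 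This (or its analogue in whatever bookkeeping one follows from \cite{rw19}) is the point to check when substituting $\shO^\lambda_{\CC^*}$ for $\shO_{\Gm}$; your appeal to the concentration of $H^\bullet_{\mathrm{dR}}(\CC^*,\shO^\lambda_{\CC^*})$ does not establish the lemma, and is not what happens in the $\lambda=0$ argument you intend to replicate. The duality step for the second isomorphism is fine as you state it.
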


We now generalize \cite[Lem.~3.4]{rw19}. The basic idea of the proof is the same. However, sufficiently many technical details need to be modified that we feel it necessary to provide the proof in full.

\begin{proposition}\label{prop:MGMquasi-eq}
	Let $F\preceq A$ be a face, and view $\CC^n$ as a vector bundle over $\CC^F$ via the coordinate projection $\pi\colon\CC^n\to \CC^F$. Let $\beta\in \CC^d$. Then there exists a fibered $\CC^*$-action on $\CC^n$ such that for all $T_A$-stable open neighborhoods $U\subseteq \CC^n$ of $T_A$, both $\MGM(U, \beta)$ and $\MGM^*(U, \beta)$ are twistedly quasi-equivariant.
\end{proposition}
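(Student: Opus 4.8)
The plan is to construct an explicit fibered $\CC^*$-action on $\CC^n$ compatible with the bundle structure $\pi\colon\CC^n\to\CC^F$, and then to verify the twisted quasi-equivariance condition \eqref{eq:quasi-eq} directly on $\shO_{T_A}^\beta$ before pushing forward. First I would exhibit the action: since $F\preceq A$ is a face, there is a supporting linear functional, i.e.\ an element of the dual cone which we may scale to a primitive $\vec{h}\in\ZZ^d$ with $\vec{h}(\vec a_i)=0$ for $\vec a_i\in F$ and $\vec{h}(\vec a_i)>0$ for $\vec a_i\notin F$ (this is essentially the primitive integral support function of a facet, intersected appropriately, or more simply a generic element of the relative interior of the face of the dual cone cut out by $F$). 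This $\vec h$ corresponds to a cocharacter $\CC^*\to T_A$, $s\mapsto s^{\vec h}$, and the induced action of $T_A$ on $\CC^n$ restricts to a $\CC^*$-action $\mu(s,x)_i = s^{\vec h(\vec a_i)}x_i$. Because $\vec h(\vec a_i)=0$ precisely on the $F$-coordinates and is strictly positive off $F$, this action fixes $\CC^F$ pointwise and contracts the fiber directions to $0$ as $s\to0$; one checks the four axioms of \Cref{def:fib} with $E=\CC^n$, $X=\CC^F$, $i$ the inclusion of the zero section, and $E^*=\CC^n\setminus\CC^F$. Note this action is by construction a sub-action of the $T_A$-action, hence preserves $T_A$, $X_A$, and all orbits $\orbit_A(G)$.

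Next I would establish that $\shO_{T_A}^\beta$ itself is $\lambda$-twistedly $\CC^*$-quasi-equivariant on the torus, for the appropriate $\lambda$. Under the torus embedding, the $\CC^*$-action corresponds on $T_A = \Spec\CC[\ZZ A]$ to the cocharacter $s\cdot t = s^{\vec h}\cdot t$ (coordinate-free: translation by the one-parameter subgroup $\vec h$). Since $\shO_{T_A}^\beta = \shO_{T_A}t^{-\beta}$ is a rank-one connection, its pullback under multiplication by $s^{\vec h}$ is again rank one, and the monodromy/residue computation shows $\mu'^*\shO_{T_A}^\beta \cong \shO_{\CC^*}^\lambda\boxtimes\shO_{T_A}^\beta$ with $\lambda = \vec h(\beta) = \sum_i h_i\beta_i$ — this is exactly the kind of computation behind the coordinate-free description of $\shO_{T_A}^\beta$ cited from \cite[eq.~(2.1.9)]{Ste19} and mirrors \cite[Lem.~3.4]{rw19}. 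The key structural point is that the multiplication map $\CC^*\times T_A\to T_A$ intertwines with $t^{-\beta}$ up to the factor $s^{-\vec h(\beta)}$ on the $\CC^*$-factor.

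Finally I would propagate twisted quasi-equivariance through the functors defining $\MGM$ and $\MGM^*$. The action $\mu$ on $\CC^n$ restricts to actions on $U$ (which is $T_A$-stable, hence $\CC^*$-stable) and on $T_A$, and the maps $\iota\colon T_A\hookrightarrow U$ and $\varpi\colon U\hookrightarrow\CC^n$ are all $\CC^*$-equivariant. Twisted quasi-equivariance is a condition about an isomorphism after restricting to the open locus $(\,\cdot\,)^* = (\,\cdot\,)\setminus\CC^F$ and pulling back along the (smooth, equivariant) action map; by base change along the action square, the functors $\iota_+,\iota_\dagger,\varpi_+,\varpi_\dagger$ commute with $\mu'^*$ (using \Cref{rmk:quasi-eq} to pass freely between $\mu'^*$, $\mu'^+$, $\mu'^\dagger$ since $\mu'$ is smooth of relative dimension one), and the exterior tensor factor $\shO_{\CC^*}^\lambda\boxtimes(-)$ passes through them formally. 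Hence $\MGM(U,\beta)=\varpi_\dagger\iota_+\shO_{T_A}^\beta$ and $\MGM^*(U,\beta)=\varpi_+\iota_\dagger\shO_{T_A}^\beta$ are both $\lambda$-twistedly $\CC^*$-quasi-equivariant with the same $\lambda=\vec h(\beta)$.

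The main obstacle I anticipate is the equivariant base change step: one must be careful that the $\CC^*$-equivariant structures on $U$, $T_A$, and $\CC^n$ are genuinely compatible (the action square $\CC^*\times U\to U$ over $\CC^*\times\CC^F\to\CC^F$ is cartesian in the relevant sense) so that $\mu'^*$ commutes with the direct-image functors — this is where the trivial-bundle/fibered-action hypotheses of \Cref{def:fib} do real work, and where the analogous argument in \cite[Lem.~3.4]{rw19} has to be adapted rather than quoted, since there the relevant module was $\shO_{\Gm}$ rather than a nontrivial twist $\shO_{T_A}^\beta$ and one must track that the twist $\lambda=\vec h(\beta)$ is the correct one throughout and does not depend on $U$.
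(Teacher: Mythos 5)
Your proposal is correct and follows essentially the same route as the paper: the same contracting cocharacter $\vec h$ (the paper's $\vec u$, with $\vec v=A^\top\vec u$) defining the fibered monomial action, the same explicit computation that pulling back $\shO_{T_A}^\beta$ along the action on the torus yields the twist $\lambda=\vec h(\beta)=\braket{\vec u,\beta}$, and the same propagation to $\MGM(U,\beta)$ and $\MGM^*(U,\beta)$ by base change over the Cartesian action squares (Cartesian because $U$ is $T_A$-stable, hence $\CC^*$-stable), invoking \Cref{rmk:quasi-eq}. The only cosmetic difference is that the paper verifies the MGM case and deduces the dual case by duality, whereas you run the base-change argument for both.
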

\begin{proof}
	Write $E$ for $\CC^n$ viewed as vector bundle over $\CC^F$. Since $\NN A$ is pointed and $F$ is a face, there exists a $\vec{u}\in \ZZ^d$ such that  $\braket{\vec{a}_i,\vec{u}}=0$ for $\vec{a}_i\in F$ and $\braket{\vec{a}_i, \vec{u}}>0$ for $\vec{a}_i\notin F$. We show that the monomial action $\mu\colon \CC^*\times E\to E$ induced by $\vec{v}\coloneqq A^\top \vec{u}$, i.e.\ $t\cdot (x_1,\ldots, x_n) = (t^{v_1}x_1,\ldots, t^{v_n}x_n)$, satisfies the requirements of the proposition.	
	\begin{steps}
		\item\label{MGMstep.fib} $\mu$ is a fibered action.
		
		\emph{Proof of Step~\ref*{MGMstep.fib}.} Condition (1) of \Cref{def:fib} holds because $v_i=0$ for all $\vec{a}_i\in F$. Because in addition $v_i>0$ for all $\vec{a}_i\notin F$, the action extends to $\CC$; so, condition (2) holds. Conditions (3) and (4) follow immediately from the definition of this extension. This finishes the proof of Step~\ref*{MGMstep.fib}.
	
		\item\label{MGMstep.isom} $\tilde{\mu}^*\shO_{T_A}^\beta\cong \shO_{\CC^*}^{\braket{\vec{u},\beta}}\boxtimes \shO_{T_A}^\beta$, where $\tilde{\mu}$ denotes the monomial action on $T_A$ induced by $\vec{u}$.
		
		\emph{Proof of Step~\ref*{MGMstep.isom}.} Let $f\colon \tilde{\mu}^*\shO_{T_A}^\beta\to \shO_{\CC^*}^{\braket{\vec{u},\beta}}\boxtimes \shO_{T_A}^\beta$ be the $\shO_{\CC^*\times T_A}$-module isomorphism taking the generator $1\otimes t^{-\beta}$ to the generator $s^{-\braket{\vec{u},\beta}}\otimes t^{-\beta}$, where $s$ denotes the coordinate on $\CC^*$. The action of $1\otimes t_i\partial_{t_i}$ on both generators is multiplication by $-\beta_i$, while the action of $s\partial_s$ on both generators is multiplication by $-\braket{\vec{u},\beta}$. Therefore, $f$ is an isomorphism of $\shD_{\CC^*\times T_A}$-modules. This finishes the proof of Step~\ref*{MGMstep.isom}.

		\item\label{MGMstep.tqi} Both $\MGM(U, \beta)$ and $\MGM^*(U, \beta)$ are $\braket{\vec{u},\beta}$-twistedly quasi-equivariant.
		
		\emph{Proof of Step~\ref*{MGMstep.tqi}.} Since the two statements are equivalent via duality, we only prove the first. Consider the following commutative diagram:
		\begin{equation}\label{eq:2squares}
			\begin{tikzcd}
				\CC^*\times T_A \arrow[r, "\mathrm{id}\times\iota'"] \arrow[d, "\tilde{\mu}"]
					&\CC^*\times (U\cap E^*) \arrow[r, "\mathrm{id}\times\varpi'"] \arrow[d, "\mu''"]
					&\CC^*\times E^* \arrow[d, "\mu'"]\\
				T_A \arrow[r, "\iota'"]	&U\cap E^* \arrow[r, "\varpi'"]	&E^*
			\end{tikzcd}
		\end{equation}
		Here, $\iota'$ is the torus embedding, $\varpi'$ is inclusion, $\mu'$ is the restriction of $\mu$ to $E^*$, and $\mu''$ is the restriction of $\mu$ to $U\cap E^*$. By construction, the action $\mu$ factors through the action of $T_A$. So, because $U$ is $T_A$-stable, it is also $\CC^*$-stable, and therefore both squares in \eqref{eq:2squares} are Cartesian. Then
		\begin{align*}
			\mu'^\dagger\MGM(U,\beta)_{|E^*} 
			&\cong \mu'^\dagger\varpi'_\dagger\iota'_+\shO_{T_A}^\beta\\
			&\cong (\mathrm{id}\times \varpi')_\dagger \mu''^\dagger\iota'_+\shO_{T_A}^\beta\\
			&\cong (\mathrm{id}\times \varpi')_\dagger (\mathrm{id}\times \iota')_+ \tilde{\mu}^\dagger\shO_{T_A}^\beta\\
			&\cong (\mathrm{id}\times \varpi')_\dagger (\mathrm{id}\times \iota')_+ (\shO_{\CC^*}^{\braket{\vec{u},\beta}}[-1]\boxtimes \shO_{T_A}^\beta)\\
			&\cong \shO_{\CC^*}^{\braket{\vec{u},\beta}}[-1]\boxtimes \varpi'_\dagger\iota'_+\shO_{T_A}^\beta\\
			&\cong \shO_{\CC^*}^{\braket{\vec{u},\beta}}[-1]\boxtimes \MGM(U,\beta)_{|E^*},
		\end{align*}
		where the second isomorphism is by base change, the third is by base change together with the fact that $\mu''$ and $\tilde{\mu}$ are smooth of the same relative dimension, and the fourth is by Step 2 and the smoothness of $\tilde{\mu}$. Now use \Cref{rmk:quasi-eq}. This finishes the proof of Step~\ref*{MGMstep.tqi} and thereby the \namecref{prop:MGMquasi-eq}.\qedhere
	\end{steps}
\end{proof}

% ========================================================================= %
\section{Projections and restrictions}\label{sec:p-and-r}
% ========================================================================= %
In \S\ref{subsec:r-and-p}, we use the framework of a $\CC^*$-fibered vector bundle to show that the projection and restriction of alternating direct images are also alternating direct images. We apply this in \S\ref{subsec:r-and-p-gkz} to mixed and dual mixed Gauss--Manin systems.

In \S\ref{subsec:normal}, we specialize these results to the case of normal $S_A$, culminating in \Cref{th:normal-pr/i}, where we compute the restriction and projection of $\shM_A(\beta)$ to the coordinate subspace corresponding to a face of $A$, and \Cref{cor:one-or-the-other}, which says that at most one of the restriction and projection can be nonzero.

% ------------------------------------------------------------------------- %
\subsection{Restricting and projecting twistedly quasi-equivariant alternating direct images}\label{subsec:r-and-p}
% ------------------------------------------------------------------------- %
Let $X$ be a smooth affine variety, $\pi\colon E\to X$ a $\CC^*$-fibered vector bundle, and as before, denote by $i\colon X\hookrightarrow E$ the zero section. 
%
%\begin{lemma}
%	Let $\shN^\bullet\in \dercat^\rmb_\rmh(\shD_E)$ be twistedly quasi-equivariant, $Z\coloneqq \fSupp\shN^\bullet\cap\cofSupp\shN^\bullet$. Assume that
%	\begin{enumerate}
%		\item $Z$ is smooth and locally closed;
%		\item $x\in \fSupp i^+\shN^\bullet$ if and only if $E_x\cap Z
%	\end{enumerate}
%\end{lemma}
%
Consider the following diagrams:
\[ Z \xrightarrow{\quad\iota\quad} U \xrightarrow{\quad \varpi\quad} E \quad\text{and}\quad 
i^{-1}(U)\cap \pi(Z) \xrightarrow{\quad\iota'\quad} i^{-1}(U) \xrightarrow{\quad \varpi'\quad} X.\]
Here, $Z$ is smooth and locally closed in $E$, $U$ is an open subset of $E$ containing $Z$, and the morphisms are inclusion. (Note that the role of $X$ has changed from what it was in \Cref{sec:adi}). Set $\varphi\coloneqq \varpi\circ \iota$ and $\varphi'\coloneqq \varpi'\circ\iota'$. 

\begin{proposition}\label{prop:tqe-adi}
	Let $\shM^\bullet\in\dercat^\rmb_\rmh(\shD_Z)$. Assume that $U\supseteq \pi^{-1}(i^{-1}(U))$ and $\pi(Z)$ is locally closed.
	\begin{enumerate}
		\item\label{prop:tqe-adi.plus-dag} If $\shN^\bullet\coloneqq \varpi_+\iota_\dagger\shM^\bullet$ is twistedly $\CC^*$-quasi-equivariant, then
			\[ i^+\shN^\bullet \cong \varpi'_+\iota'_\dagger (i\circ\varphi')^+\shN^\bullet.\]
		\item\label{prop:tqe-adi.dag-plus} If $\shN^\bullet\coloneqq \varpi_\dagger\iota_+\shM^\bullet$ is twistedly $\CC^*$-quasi-equivariant, then
			\[ \pi_+\shN^\bullet \cong \varpi'_\dagger\iota'_+ (i\circ\varphi')^\dagger\shN^\bullet.\]
	\end{enumerate}	 
\end{proposition}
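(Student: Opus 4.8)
The plan is to prove part \eqref{prop:tqe-adi.plus-dag} directly and obtain part \eqref{prop:tqe-adi.dag-plus} by duality, since $\varpi_\dagger\iota_+ = \Ddual\,\varpi_+\iota_\dagger\,\Ddual$ and $\pi_+ = \Ddual\,\pi_\dagger\,\Ddual$, while twisted quasi-equivariance is preserved under $\Ddual$ (the twist $\lambda$ becomes $-\lambda$ by \Cref{rmk:quasi-eq}, which is harmless). So I focus on \eqref{prop:tqe-adi.plus-dag}.

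First I would apply \Cref{lem:pi2i} to $\shN^\bullet = \varpi_+\iota_\dagger\shM^\bullet$: since $\shN^\bullet$ is twistedly $\CC^*$-quasi-equivariant, $i^+\shN^\bullet \cong \pi_\dagger\shN^\bullet$. This converts the left-hand side into a genuine direct image, which is much easier to compute. Next I would compute $\pi_\dagger\varpi_+\iota_\dagger\shM^\bullet$ using base change along the Cartesian-type square relating $\pi$, $\varpi$, $\varpi'$, and the restriction $\pi_U\colon U\to i^{-1}(U)$ of $\pi$. The hypothesis $U\supseteq \pi^{-1}(i^{-1}(U))$ is exactly what makes $U$ a union of $\pi$-fibers, so that $U = \pi^{-1}(i^{-1}(U))$ and the square
\[
\begin{tikzcd}
U \arrow[r,"\varpi"] \arrow[d,"\pi_U"] & E \arrow[d,"\pi"]\\
i^{-1}(U) \arrow[r,"\varpi'"] & X
\end{tikzcd}
\]
is Cartesian with $\varpi,\varpi'$ open embeddings; base change gives $\pi_\dagger\varpi_+ \cong \varpi'_+\,(\pi_U)_\dagger$ (using that for an open embedding $\varpi_+=\varpi_\dagger$, so the base-change isomorphism $\pi_\dagger\varpi_\dagger\cong\varpi'_\dagger(\pi_U)_\dagger$ is just functoriality, and similarly on the $+$ side). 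Hence $i^+\shN^\bullet\cong \varpi'_+\,(\pi_U)_\dagger\,\iota_\dagger\shM^\bullet$.

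It then remains to identify $(\pi_U)_\dagger\iota_\dagger\shM^\bullet$ with $\iota'_\dagger (i\circ\varphi')^+\shN^\bullet$. Writing $\pi_Z\colon Z\to \pi(Z)$ for the restriction of $\pi$ (well-defined into $i^{-1}(U)\cap\pi(Z)$ since $\pi(Z)$ is assumed locally closed and $Z\subseteq U$), the composition $\pi_U\circ\iota$ factors as $\iota'\circ\pi_Z$, so $(\pi_U)_\dagger\iota_\dagger\shM^\bullet\cong \iota'_\dagger\,(\pi_Z)_\dagger\shM^\bullet$. The content is now that $(\pi_Z)_\dagger\shM^\bullet \cong (i\circ\varphi')^+\shN^\bullet$. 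For this I would again use \Cref{lem:pi2i}, applied this time over the smaller base $\pi(Z)$: the restriction $Z\to\pi(Z)$ sits inside the $\CC^*$-fibered structure, the zero section over $\pi(Z)$ is $i\circ\iota'$ (suitably interpreted), and $\shM^\bullet = \varphi^+\shN^\bullet = \iota^+\varpi^+\shN^\bullet$ inherits twisted quasi-equivariance as the restriction of the twistedly quasi-equivariant $\shN^\bullet$ to the $\CC^*$-stable locally closed subset $Z$; so $(\pi_Z)_\dagger\shM^\bullet\cong (i\circ\iota')^+\varpi^+\shN^\bullet = (i\circ\varphi')^+\shN^\bullet$. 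Chaining the three isomorphisms yields the claim.

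The main obstacle I anticipate is bookkeeping rather than anything deep: one must be careful that $\pi$ genuinely restricts to the various locally closed subsets named (this is where ``$\pi(Z)$ locally closed'' and ``$U\supseteq\pi^{-1}(i^{-1}(U))$'' are used), that the zero section of the restricted vector bundle is compatibly identified with $i$ composed with the relevant inclusion, and that the twisted quasi-equivariance of $\shN^\bullet$ really does descend to $\shM^\bullet$ on $Z$ — which requires $Z$ (equivalently its image) to be stable under the fibered $\CC^*$-action, a point worth checking explicitly since $Z$ is only assumed locally closed. Once these compatibilities are pinned down, everything is a formal composition of base change and two invocations of \Cref{lem:pi2i}.
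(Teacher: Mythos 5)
Your reduction of part \eqref{prop:tqe-adi.dag-plus} to part \eqref{prop:tqe-adi.plus-dag} by duality plus \Cref{lem:pi2i}, and your opening move $i^+\shN^\bullet\cong\pi_\dagger\shN^\bullet$ via \Cref{lem:pi2i}, are fine and agree with the paper. The core of your argument, however, breaks at the ``base change'' step. The hypothesis $U\supseteq\pi^{-1}(i^{-1}(U))$ does \emph{not} make $U$ a union of $\pi$-fibers: it says that the full fiber over any point of $i^{-1}(U)$ lies in $U$, not that every point of $U$ lies over $i^{-1}(U)$. Already for $\pi\colon E=\CC^2\to\CC$, $i(x)=(x,0)$, the open set $U=\CC^2\setminus\{0\}$ satisfies the hypothesis while $U\neq\pi^{-1}(i^{-1}(U))$ and $\pi(U)\nsubseteq i^{-1}(U)$; the same happens in the intended application, where $U\supseteq T_A$ so $\pi_F(U)\supseteq T_F$, but $i_F^{-1}(U)$ need not contain $T_F$. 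So the map $\pi_U\colon U\to i^{-1}(U)$ is in general not even defined and your square does not exist, let alone being Cartesian. Worse, your parenthetical claim that $\varpi_+=\varpi_\dagger$ for an open embedding is false --- for open embeddings it is the \emph{pullbacks} that coincide ($\varpi^+=\varpi^\dagger$, cf.\ \Cref{rmk:shrink}); the discrepancy between $\varpi_+$ and $\varpi_\dagger$ is exactly what makes $\varpi_+\iota_\dagger$ and $\varpi_\dagger\iota_+$ different functors. Accordingly, $\pi_\dagger\varpi_+\cong\varpi'_+(\pi_U)_\dagger$ is not an instance of base change and is false in general: genuine base change only computes $\varpi'^{+}\pi_\dagger\shN^\bullet$, i.e.\ the restriction of $\pi_\dagger\shN^\bullet$ to $i^{-1}(U)$, whereas the whole content of the proposition is how $\pi_\dagger\shN^\bullet$ extends across the boundary of $i^{-1}(U)\cap\pi(Z)$.

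The final step has the same defect: \Cref{lem:pi2i} is a statement about the total space of the bundle with its zero section, and cannot be applied to $\pi_Z\colon Z\to\pi(Z)$ --- the zero section over $\pi(Z)$ is not contained in $Z$ (in the application $Z=T_A$, $\pi(Z)=T_F$, and the zero section over $T_F$ is $\orbit_A(F)$, disjoint from $Z$), $Z$ is not assumed $\CC^*$-stable, and twisted quasi-equivariance is not defined for complexes on $Z$. What is actually needed, and what the paper does, is to combine \Cref{lem:pi2i} with the uniqueness characterization of \Cref{lem:gen-plus-dag}: one checks that $i^+\shN^\bullet\cong\pi_\dagger\shN^\bullet$ has fiber support inside $i^{-1}(U)$, and that its cofiber support over $i^{-1}(U)$ lies in $\pi(Z)$; the latter uses base change along the individual fibers $E_x$ together with the hypothesis $U\supseteq\pi^{-1}(i^{-1}(U))$ (which guarantees $E_x\subseteq U$ for $x\in i^{-1}(U)$, so that $\cofSupp\shN^\bullet\cap E_x\subseteq Z$ by \Cref{lem:gen-plus-dag}). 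Your proposal needs its middle two steps replaced by an argument of this kind; as written, the isomorphisms you chain together are not available.
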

\begin{proof}
	(\ref{prop:tqe-adi.plus-dag}) By \Cref{lem:gen-plus-dag}, the fiber support of $i^+\shN^\bullet$ is contained in $i^{-1}(U)$. Suppose $x\in i^{-1}(U)\cap \cofSupp i^+\shN^\bullet$. Then by \Cref{lem:pi2i} and the base change formula, $(\pi|_{E_x})_\dagger i_{E_x}^\dagger \shN^\bullet\neq 0$, where $E_x\coloneqq \pi^{-1}(x)$ is the fiber of $E$ over $x$, and $i_{E_x}\colon E_x\hookrightarrow E$ is inclusion. So, $i_{E_x}^\dagger \shN^\bullet\neq 0$, and therefore $E_x \cap \cofSupp \shN^\bullet\neq \emptyset$. On the other hand, $x\in i^{-1}(U)$, so because $U\supseteq \pi^{-1}(i^{-1}(U))$, we have that $E_x\subseteq U$. Hence, $E_x\cap \cofSupp \shN^\bullet$ is a nonempty subset of $Z$  by \Cref{lem:gen-plus-dag}, and therefore $\pi(x)\in \pi(Z)\cap i^{-1}(U)$. Thus, 
	\[ i^+\shN^\bullet \cong \varpi'_+\iota'_\dagger \varphi'^+i^+\shN^\bullet \cong \varpi'_+\iota'_\dagger(i\circ\varphi')^+\shN^\bullet.\]	
	
	(\ref{prop:tqe-adi.dag-plus}) This follows from 
	(\ref{prop:tqe-adi.plus-dag}) by duality together with \Cref{lem:pi2i}.
\end{proof}

It may appear at first that the assumption that $U\supseteq \pi^{-1}(i^{-1}(U))$ in \Cref{prop:tqe-adi} is too restrictive to apply in the situation of \Cref{prop:MGMquasi-eq}. However, as we will see in \Cref{lem:enlargeU}, $U$ can always be enlarged to satisfy this assumption without changing $\MGM(U,\beta)$ or $\MGM^*(U,\beta)$.

% ------------------------------------------------------------------------- %
\subsection{Restricting and projecting GKZ systems}\label{subsec:r-and-p-gkz}
% ------------------------------------------------------------------------- %
Before stating \Cref{th:pr/i}, we recall the below facts about mixed and dual mixed Gauss--Manin systems. Also recall from \eqref{eq:orbit} that $\orbit_A(F)$ is the $T_A$-orbit of the toric variety $X_A$ which corresponds to $F$, and from \eqref{eq:dAF} that $d_{A/F}=d - \operatorname{rank}F$.

Here and in the rest of this article, we follow that convention that $\Bigwedge\CC^k$ lives in cohomological degrees $-k$ through $0$.

\begin{fact}\label{fact:mgm*mgm} Let $\beta\in \CC^d$, and let $U\subseteq \CC^n$ be a $T_A$-stable open neighborhood of $T_A$. Write $i_{\orbit_A(F)}$ for the inclusion $\orbit_A(F)\hookrightarrow \CC^n$.
	\begin{enumerate}
		\item\label{fact:mgmcofSupp} If $\orbit_A(F)\subseteq \cofSupp\MGM(U, \beta)$, then 
		\[ i_{\orbit_A(F)}^\dagger \MGM(U, \beta) \cong \bigoplus_{\lambda + \ZZ F} \shO_{T_F}^\lambda \otimes_\CC \Bigwedge \CC^{d_{A/F}},\]
		where the direct sum is over those $\lambda+\ZZ F\in \CC F/ \ZZ F$ for which $\beta-\lambda \in \ZZ^d$. This follows from \cite[Lem.~8.14(b), Rem.~8.16, and Eq.~(8.3.3)]{Ste19}.
		\item\label{fact:mgm*fsupp} If $\orbit_A(F)\subseteq \fSupp\MGM^*(U, \beta)$, then 
		\[ i_{\orbit_A(F)}^+ \MGM^*(U, \beta) \cong \bigoplus_{\lambda + \ZZ F} \shO_{T_F}^\lambda \otimes_\CC \Bigwedge \CC^{d_{A/F}}[-d_{A/F}],\]
		where the direct sum is over those $\lambda+\ZZ F\in \CC F/ \ZZ F$ for which $\beta-\lambda \in \ZZ^d$. This follows from \Cref{fact:mgm*mgm}(\ref{fact:mgmcofSupp}) and \cite[Rmk.~8.18]{Ste19}.
	\end{enumerate}
\end{fact}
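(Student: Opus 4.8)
The plan is to deduce both parts from the restriction computations in \cite[Lem.~8.14(b), Rem.~8.16, and Eq.~(8.3.3)]{Ste19}, obtaining part~\eqref{fact:mgm*fsupp} from part~\eqref{fact:mgmcofSupp} by holonomic duality.

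For part~\eqref{fact:mgmcofSupp}, I would first apply \Cref{lem:gen-dag-plus} with $Z=T_A$ and $\shM^\bullet=\shO_{T_A}^\beta$ to get $\cofSupp\MGM(U,\beta)\subseteq U$; hence the hypothesis $\orbit_A(F)\subseteq\cofSupp\MGM(U,\beta)$ forces $\orbit_A(F)\subseteq U$. Since $\varpi$ is an open embedding we have $\varpi^\dagger\varpi_\dagger\cong\mathrm{id}$, so factoring $i_{\orbit_A(F)}=\varpi\circ i'$ through the inclusion $i'\colon\orbit_A(F)\hookrightarrow U$ gives
\[ i_{\orbit_A(F)}^\dagger\MGM(U,\beta)\;\cong\; i'^\dagger\iota_+\shO_{T_A}^\beta,\]
which is exactly the cofiber restriction of $\iota_+\shO_{T_A}^\beta$ to the orbit $\orbit_A(F)\cong T_F$. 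Using \cite[Rem.~8.16]{Ste19} to identify $\iota_+\shO_{T_A}^\beta$ near $\orbit_A(F)$ with the relevant Euler--Koszul/Gauss--Manin module, this restriction is the one computed in \cite[Lem.~8.14(b) and Eq.~(8.3.3)]{Ste19}: the factor $\Bigwedge\CC^{d_{A/F}}$ is the de Rham cohomology of the $d_{A/F}$-dimensional subtorus of $T_A$ acting trivially on $\orbit_A(F)$, the direct sum runs over the characters $\lambda$ of $T_F$ with $\beta-\lambda\in\ZZ^d$ taken modulo $\ZZ F$, and the condition $\orbit_A(F)\subseteq\cofSupp\MGM(U,\beta)$ is precisely the non-degeneracy that makes this restriction nonzero with no spurious summands.

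For part~\eqref{fact:mgm*fsupp}, I would use that $\Ddual\shO_{T_A}^\beta\cong\shO_{T_A}^{-\beta}$ and that $\Ddual$ interchanges $\varpi_+\iota_\dagger$ with $\varpi_\dagger\iota_+$, whence $\Ddual\MGM^*(U,\beta)\cong\MGM(U,-\beta)$ (cf.\ \cite[Rmk.~8.18]{Ste19}). In particular $\orbit_A(F)\subseteq\fSupp\MGM^*(U,\beta)$ if and only if $\orbit_A(F)\subseteq\cofSupp\MGM(U,-\beta)$, and
\[ i_{\orbit_A(F)}^+\MGM^*(U,\beta)\;\cong\;\Ddual\,i_{\orbit_A(F)}^\dagger\MGM(U,-\beta).\]
Applying part~\eqref{fact:mgmcofSupp} with parameter $-\beta$ and then dualizing term by term via $\Ddual\shO_{T_F}^\lambda\cong\shO_{T_F}^{-\lambda}$ and $\Ddual\bigl(\Bigwedge\CC^{d_{A/F}}\bigr)\cong\Bigwedge\CC^{d_{A/F}}[-d_{A/F}]$, and re-indexing the sum by $\lambda\mapsto-\lambda$ (which rewrites $-\beta-\lambda\in\ZZ^d$ as $\beta-\lambda\in\ZZ^d$), yields the asserted formula.

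The genuinely new content is just the reduction to $U$ via the cofiber-support bound together with the duality bookkeeping, so I do not expect a serious obstacle; the main thing to be careful about is keeping the indexing cosets and the cohomological shift consistent --- in particular the sign hidden in $\shO_{T_A}^\beta=\shO_{T_A}t^{-\beta}$ and the convention that $\Bigwedge\CC^{d_{A/F}}$ lives in cohomological degrees $-d_{A/F}$ through $0$.
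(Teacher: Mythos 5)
Your proposal is correct and follows essentially the same route as the paper: part (1) is reduced, via the cofiber-support bound of \Cref{lem:gen-dag-plus} and the open embedding $\varpi$, to the orbit-restriction computations cited from \cite{Ste19}, and part (2) is deduced from part (1) by holonomic duality (using $\Ddual\shO_{T_A}^\beta\cong\shO_{T_A}^{-\beta}$ and $\Ddual\varpi_+\iota_\dagger\cong\varpi_\dagger\iota_+\Ddual$), which is exactly the content of the paper's citation of \cite[Rmk.~8.18]{Ste19}. Your bookkeeping of the shift $[-d_{A/F}]$ and the re-indexing $\lambda\mapsto-\lambda$ is consistent with the stated conventions.
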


Let $F\preceq A$ be a face, and let $\pi_F\colon \CC^n \to \CC^F$ and $i_F\colon \CC^F \hookrightarrow \CC^n$ be coordinate projection and inclusion, respectively.

\begin{lemma}\label{lem:enlargeU}
	Let $\beta\in \CC^n$ and $\shM^\bullet\in \dercat^\rmb_\rmc(\shD_{\CC^n})$. Let $U\subseteq \CC^n$ be a $T_A$-stable open neighborhood of $T_A$, and let $U'=U\cup \pi_F^{-1}(i_F^{-1}(U))$. Then
	\[ \MGM^*(U,\beta) \cong \MGM^*(U',\beta) \quad \text{and} \quad \MGM(U,\beta) \cong \MGM(U',\beta).\]
\end{lemma}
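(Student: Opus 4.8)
## Proof proposal

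The plan is to identify $\MGM^*(U',\beta)$ through the uniqueness statement of \Cref{lem:gen-plus-dag}, and to reduce the $\MGM$ case to the $\MGM^*$ case by holonomic duality. Since $\Ddual$ interchanges $\varpi_\dagger$ with $\varpi_+$ and $\iota_+$ with $\iota_\dagger$, while $\Ddual\shO_{T_A}^\gamma\cong\shO_{T_A}^{-\gamma}$, one has $\Ddual\MGM(V,\gamma)\cong\MGM^*(V,-\gamma)$ for every $T_A$-stable open $V\supseteq T_A$; so applying $\Ddual$ deduces $\MGM(U,\beta)\cong\MGM(U',\beta)$ from $\MGM^*(U,\beta)\cong\MGM^*(U',\beta)$. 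It therefore suffices to treat $\MGM^*$, and we may assume $F\subsetneq A$ (otherwise $\pi_F,i_F$ are identities and $U'=U$).

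Write $\shN^\bullet\coloneqq\MGM^*(U,\beta)=\varpi_+\iota_\dagger\shO_{T_A}^\beta$. By \Cref{lem:gen-plus-dag} applied with the open set $U'$, the object $\MGM^*(U',\beta)$ is the unique object of $\dercat^{\rmb,X_A}_\rmc(\shD_{\CC^n})$ whose restriction to $T_A$ is $\shO_{T_A}^\beta$, whose fiber support lies in $U'$, and whose cofiber support meets $U'$ only inside $T_A$. The first two conditions hold for $\shN^\bullet$ by \Cref{lem:gen-plus-dag} applied with the open set $U$ (using $T_A\subseteq U\subseteq U'$ and $\fSupp\shN^\bullet\subseteq U$), and since that lemma also gives $\cofSupp\shN^\bullet\cap U\subseteq T_A$, the third condition reduces to the single assertion
\[ \cofSupp\shN^\bullet\cap(U'\setminus U)=\emptyset; \]
equivalently, the natural morphism $\MGM^*(U',\beta)\to\MGM^*(U,\beta)$, whose cone is supported on $U'\setminus U$, is an isomorphism.

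Proving this vanishing is where the geometry of $U'$ and twisted quasi-equivariance come in. First, $U'\setminus U$ is disjoint from the zero section $\CC^F$ of $\pi_F$ (if $x\in U'\setminus U\subseteq\pi_F^{-1}(i_F^{-1}(U))$ lay in $\CC^F$, then $i_F\pi_F(x)=x$ would be in $U$) and is $T_A$-stable, since $U$, $\pi_F$, $i_F$ are. As $\shN^\bullet$ is $T_A$-monodromic and supported on $X_A$, its cofiber support is a union of orbits $\orbit_A(G)$, so it is enough to rule out $\orbit_A(G)\subseteq U'\setminus U$ for any such orbit with $\orbit_A(G)\subseteq\cofSupp\shN^\bullet$. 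Suppose one existed; then $G\neq A$ (since $\orbit_A(A)=T_A\subseteq U$) and $G\not\preceq F$ (since $\orbit_A(G)\not\subseteq\CC^F$). The fibered action $\mu$ of \Cref{prop:MGMquasi-eq} contracts $\CC^n$ onto $\CC^F$ with $\lim_{t\to0}t\cdot x=i_F\pi_F(x)$, and $i_F\pi_F$ carries $\orbit_A(G)$ onto $\orbit_A(F\cap G)$ (the common columns $F\cap G$ again forming a face of $A$); hence $U'\setminus U\subseteq\pi_F^{-1}(i_F^{-1}(U))$ forces $\orbit_A(F\cap G)\subseteq U$, with $F\cap G\neq A$.

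The remaining step, which I expect to be the main obstacle, is to show that $\orbit_A(F\cap G)$ is nonetheless contained in $\cofSupp\shN^\bullet$: this gives $\orbit_A(F\cap G)\subseteq\cofSupp\shN^\bullet\cap U\subseteq T_A$, hence $F\cap G=A$, a contradiction. For a general twistedly quasi-equivariant complex the cofiber support need not descend along $\mu$ to its limit orbit (e.g.\ for an open immersion $j\colon\CC^*\hookrightarrow\AA^1$ the twistedly quasi-equivariant $j_\dagger\shO_{\CC^*}$ has cofiber support $\CC^*$, missing the origin), so here one must exploit that $\shN^\bullet=\MGM^*(U,\beta)$ is rigid, not arbitrary. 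Concretely, I would restrict $\shN^\bullet$ to the fiber $E_y\coloneqq\pi_F^{-1}(\pi_F(x))$ through a point $x\in\orbit_A(G)$; base change shows $i_{E_y}^\dagger\shN^\bullet$ is twistedly $\CC^*$-quasi-equivariant on $E_y$ for the linear contraction $\mu|_{E_y}$, so \Cref{lem:pi2i} identifies $i_{i_F\pi_F(x)}^\dagger\shN^\bullet$ with the de Rham cohomology of $i_{E_y}^\dagger\shN^\bullet$ along $E_y$; combining this with the explicit description of $i_{\orbit_A(G)}^\dagger\shN^\bullet$ furnished by \Cref{fact:mgm*mgm} (a sum of line bundles with connection $\shO_{T_G}^\lambda$ tensored with $\Bigwedge\CC^{d_{A/G}}$) should force that cohomology to be nonzero, whence $i_F\pi_F(x)\in\cofSupp\shN^\bullet$ and $\orbit_A(F\cap G)\subseteq\cofSupp\shN^\bullet$.
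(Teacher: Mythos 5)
Your reduction (duality to pass from $\MGM$ to $\MGM^*$, then the uniqueness characterization of \Cref{lem:gen-plus-dag} applied with the open set $U'$, which leaves only the vanishing $\cofSupp\shN^\bullet\cap(U'\setminus U)=\emptyset$ to check) is sound, and your key geometric observation is the right one: an orbit $\orbit_A(G)\subseteq\pi_F^{-1}(i_F^{-1}(U))$ satisfies $i_F\pi_F(\orbit_A(G))=\orbit_A(G\cap F)\subseteq U$. But the step you yourself flag as the main obstacle is a genuine gap: you need $\orbit_A(G\cap F)\subseteq\cofSupp\shN^\bullet$, and the proposed argument (restrict to a fiber $E_y$, apply \Cref{lem:pi2i} there, and argue via \Cref{fact:mgm*mgm} that the fiberwise cohomology ``should'' be nonzero) is only a sketch. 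As your own example $j_\dagger\shO_{\CC^*}$ on $\AA^1$ shows, cofiber support does not in general descend to the limit orbit of the contraction, so a real argument specific to $\MGM^*(U,\beta)$ would be required here, and none is given.

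The gap is also unnecessary, and the paper's proof shows why: from $\orbit_A(G\cap F)\subseteq U$, the orbit--cone correspondence gives $\orbit_A(G\cap F)\subseteq\closure{\orbit_A(G)}$, so the open set $U$ meets $\orbit_A(G)$, and $T_A$-stability of $U$ then gives $\orbit_A(G)\subseteq U$ outright --- contradicting $\orbit_A(G)\subseteq U'\setminus U$ with no cofiber-support analysis at all. Equivalently, the paper simply proves $U'\cap X_A=U\cap X_A$ by this orbit argument and concludes at once, because everything in sight is supported on $X_A=\closure{T_A}$, so the characterizations in \Cref{lem:gen-plus-dag,lem:gen-dag-plus} (and hence the alternating direct images themselves) depend only on $U\cap X_A$; this treats $\MGM$ and $\MGM^*$ simultaneously, so even the duality reduction is not needed. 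If you replace your final descent claim by this two-line orbit--cone argument, your proof closes and becomes essentially the paper's.
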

\begin{proof}
	It suffices to show that $U'\cap X_A = U\cap X_A$. The containment $U'\cap X_A \supseteq U\cap X_A$ is immediate. For the other containment, let $G$ be a face of $A$ such that $\orbit_A(G)\subseteq U'$, and suppose $\orbit_A(G)\subseteq \pi_F^{-1}(i_F^{-1}(U))$. Then $i_F(\pi_F(\orbit_A(G)))\subseteq U$. But $i_F(\pi_F(\orbit_A(G)))= i_F(\orbit_F(G\cap F))=\orbit_A(G\cap F)$, so $\orbit_A(G\cap F)\subseteq U$. Therefore, because $U$ is open, the orbit-cone correspondence implies that $\orbit_A(G)\subseteq U$. Thus, $U'\cap X_A = U\cap X_A$.
\end{proof}

\begin{theorem}\label{th:pr/i}
	Let $\beta\in \CC^n$, and let $U\subseteq \CC^n$ be a $T_A$-stable open neighborhood of $T_A$.
	\begin{enumerate}
		\item\label{th:pr/i.bad} If $\beta\notin \CC F + \ZZ^d$ or $U\nsupseteq \orbit_A(F)$, then 
		    \[\pi_{F+}\MGM_A(U,\beta) = i_F^+\MGM^*_A(U,\beta) = 0.\]
		\item\label{th:pr/i.good} If $U\supseteq \orbit_A(F)$, then 
			\begin{align*}
				i_F^+\MGM^*_A(U,\beta) 
					&\cong \bigoplus_{\lambda+\ZZ F} \MGM^*_F\bigl(i_F^{-1}(U), \lambda\bigr)\otimes_\CC \Bigwedge\CC^{d_{A/F}}[-d_{A/F}]
				\intertext{and}
				\pi_{F+}\MGM_A(U,\beta) 
					&\cong \bigoplus_{\lambda+\ZZ F} \MGM_F\bigl(i_F^{-1}(U), \lambda\bigr)\otimes_\CC \Bigwedge\CC^{d_{A/F}},
			\end{align*}
%             \begin{equation*}
% 				i_F^+\MGM^*_A(U,\beta) 
% 					\cong \bigoplus_{\lambda+\ZZ F} \MGM^*_F\bigl(i_F^{-1}(U), \lambda\bigr)\otimes_\CC \Bigwedge\CC^{d_{A/F}}[-d_{A/F}]
% 		    \end{equation*}
% 		    and
% 		    \begin{equation*}
% 				\pi_{F+}\MGM_A(U,\beta) 
% 					\cong \bigoplus_{\lambda+\ZZ F} \MGM_F\bigl(i_F^{-1}(U), \lambda\bigr)\otimes_\CC \Bigwedge\CC^{d_{A/F}},
% 			\end{equation*}
    	where the direct sums are over those $\lambda+\ZZ F\in \CC F/ \ZZ F$ for which $\beta-\lambda \in \ZZ^d$. If in addition, $\beta\in \CC F + \ZZ^d$, then neither $i_F^+\MGM^*_A(U,\beta)$ nor $\pi_{F+}\MGM_A(U,\beta)$ vanish.
	\end{enumerate}
\end{theorem}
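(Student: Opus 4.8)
The plan is to reduce everything to the framework of \Cref{sec:tqe} and \Cref{sec:p-and-r}, viewing $\CC^n$ as a $\CC^*$-fibered vector bundle over $\CC^F$ via $\pi_F$, with zero section $i_F$. First I would dispose of part \eqref{th:pr/i.bad}. If $U\nsupseteq \orbit_A(F)$, then $\orbit_A(F)\cap U=\emptyset$ (since $U$ is $T_A$-stable and $\orbit_A(F)$ is a single orbit), so $\CC^F\cap \closure{T_A\cdot\mathbbm1}\cap U$ misses the relevant orbit and the restriction/projection to $\CC^F$ vanishes by a support argument using \Cref{lem:gen-plus-dag,lem:gen-dag-plus} together with \cite[Lem.~3.3]{Ste19}; more precisely, $\MGM_A(U,\beta)$ and $\MGM^*_A(U,\beta)$ have fiber support (resp.\ cofiber support) contained in $U$, hence their restriction (resp.\ projection) to $\CC^F$ is detected only along $\CC^F\cap U$, which by the orbit--cone correspondence contains no point mapping into $\orbit_A(F)$. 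For the parameter condition $\beta\notin\CC F+\ZZ^d$, I would invoke \Cref{fact:mgm*mgm}: the relevant $i_{\orbit_A(F)}^\dagger$ and $i_{\orbit_A(F)}^+$ are direct sums over $\lambda+\ZZ F$ with $\beta-\lambda\in\ZZ^d$, and this index set is empty precisely when $\beta\notin \CC F+\ZZ^d$; combined with \Cref{lem:pi2i} (the modules are twistedly quasi-equivariant by \Cref{prop:MGMquasi-eq}, so $\pi_{F+}\cong i_F^\dagger$ and $\pi_{F\dagger}\cong i_F^+$ in the appropriate senses), this forces the projection/restriction to vanish.

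For part \eqref{th:pr/i.good}, the idea is to apply \Cref{prop:tqe-adi}. By \Cref{lem:enlargeU} I may replace $U$ with $U'=U\cup\pi_F^{-1}(i_F^{-1}(U))$ without changing $\MGM_A(U,\beta)$ or $\MGM^*_A(U,\beta)$; this new $U$ satisfies the hypothesis $U\supseteq \pi_F^{-1}(i_F^{-1}(U))$ of \Cref{prop:tqe-adi}. The locus $Z=\orbit_A(A)=T_A$ has image $\pi_F(T_A)=T_F$, which is locally closed. By \Cref{prop:MGMquasi-eq}, $\MGM^*_A(U,\beta)=\varpi_\dagger\iota_+\shO_{T_A}^\beta$ is twistedly quasi-equivariant, so \Cref{prop:tqe-adi}\eqref{prop:tqe-adi.plus-dag}—wait, it is $\varpi_+\iota_\dagger$ for $\MGM$ and $\varpi_\dagger\iota_+$ for $\MGM^*$—gives
\[ i_F^+\MGM^*_A(U,\beta)\cong \varpi'_\dagger\iota'_+(i_F\circ\varphi')^\dagger\MGM^*_A(U,\beta) \]
and similarly $\pi_{F+}\MGM_A(U,\beta)\cong\varpi'_\dagger\iota'_+(i_F\circ\varphi')^\dagger\MGM_A(U,\beta)$ after using \Cref{lem:pi2i} to convert $\pi_{F+}$ into $i_F^\dagger$. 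Here $\varphi'$ is the embedding $T_F\hookrightarrow i_F^{-1}(U)\hookrightarrow \CC^F$. The point $i_F\circ\varphi'$ lands $T_F$ into $\orbit_A(A)\cap$(zero section), so $(i_F\circ\varphi')^\dagger\MGM^*_A(U,\beta)$ and $(i_F\circ\varphi')^+\MGM_A(U,\beta)$ are exactly the restrictions to $T_A$ composed with $\pi_F|_{T_A}$, i.e.\ a pullback of $\shO_{T_A}^\beta$ along the torus quotient $T_A\to T_F$. Decomposing $\shO_{T_A}^\beta$ as a $T_A$-module over the subtorus gives the direct sum $\bigoplus_{\lambda+\ZZ F}\shO_{T_F}^\lambda\otimes_\CC\Bigwedge\CC^{d_{A/F}}$ (with the cohomological shift $[-d_{A/F}]$ in the $\dagger$/star case), exactly as in \Cref{fact:mgm*mgm}; pushing this through $\varpi'_\dagger\iota'_+$ and $\varpi'_\dagger\iota'_\dagger$ respectively produces $\MGM^*_F(i_F^{-1}(U),\lambda)$ and $\MGM_F(i_F^{-1}(U),\lambda)$ summand by summand, since $\Bigwedge\CC^{d_{A/F}}$ is a constant $\shD$-module on a point and passes through these functors.

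The nonvanishing claim when $\beta\in\CC F+\ZZ^d$ follows because then the index set $\{\lambda+\ZZ F:\beta-\lambda\in\ZZ^d\}$ is nonempty, and each summand $\MGM_F(i_F^{-1}(U),\lambda)$ (resp.\ $\MGM^*_F$) is nonzero: its restriction to $T_F$ is $\shO_{T_F}^\lambda\neq 0$ by \Cref{lem:gen-plus-dag}\eqref{lem:gen-plus-dag.1} (resp.\ the dual statement), and $\Bigwedge\CC^{d_{A/F}}$ is nonzero, so the exterior tensor product is nonzero. The main obstacle I anticipate is the bookkeeping in the second paragraph: verifying that $\orbit_A(F)$ genuinely lies in $\cofSupp\MGM_A(U,\beta)$ (resp.\ $\fSupp\MGM^*_A(U,\beta)$) whenever $U\supseteq\orbit_A(F)$ and $\beta\in\CC F+\ZZ^d$—so that \Cref{fact:mgm*mgm} applies—and correctly matching the $\CC^*$-action from \Cref{prop:MGMquasi-eq} (built from $\vec u$ with $\braket{\vec a_i,\vec u}=0$ on $F$) with the vector-bundle structure used in \Cref{prop:tqe-adi}, including tracking the cohomological shift $[-d_{A/F}]$ through the chain of base-change and Künneth isomorphisms.
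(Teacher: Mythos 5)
Your overall route is the paper's: part (1) by a support argument plus \Cref{fact:mgm*mgm}, and part (2) by enlarging $U$ via \Cref{lem:enlargeU}, invoking \Cref{prop:MGMquasi-eq} so that \Cref{prop:tqe-adi} applies, then computing the restriction to $\orbit_A(F)=T_F$ with \Cref{fact:mgm*mgm} and reassembling by additivity. But the execution has a genuine error: you have the two alternating direct images swapped. By definition $\MGM(U,\beta)=\varpi_\dagger\iota_+\shO_{T_A}^\beta$ and $\MGM^*(U,\beta)=\varpi_+\iota_\dagger\shO_{T_A}^\beta$ (your parenthetical asserts the reverse), so for $i_F^+\MGM^*_A(U,\beta)$ you must apply \Cref{prop:tqe-adi}\eqref{prop:tqe-adi.plus-dag}, which yields $\varpi'_+\iota'_\dagger(i_F\circ\varphi')^+\MGM^*_A(U,\beta)$, not $\varpi'_\dagger\iota'_+(i_F\circ\varphi')^\dagger\MGM^*_A(U,\beta)$ as you wrote; $\pi_{F+}\MGM_A(U,\beta)$ is then handled by \Cref{prop:tqe-adi}\eqref{prop:tqe-adi.dag-plus} (or by duality), not by another application of the same formula. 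The final assembly compounds this: you push the summands through ``$\varpi'_\dagger\iota'_+$'' and ``$\varpi'_\dagger\iota'_\dagger$'' and claim these give $\MGM^*_F$ and $\MGM_F$; but $\varpi'_\dagger\iota'_+$ \emph{is} $\MGM_F$ by definition, $\varpi'_+\iota'_\dagger$ is $\MGM^*_F$, and $\varpi'_\dagger\iota'_\dagger$ is neither. Followed literally, your computation attaches the wrong system to each side, which is precisely the content the theorem (and \Cref{cor:one-or-the-other}) must get right. Relatedly, $i_F\circ\varphi'$ carries $T_F$ onto the boundary orbit $\orbit_A(F)$, which is not contained in $T_A$, so the restriction is not a pullback of $\shO_{T_A}^\beta$ along a quotient $T_A\to T_F$; the decomposition with the factor $\Bigwedge\CC^{d_{A/F}}$ and the shift is exactly the nontrivial content of \Cref{fact:mgm*mgm} and must be quoted with the plus/dagger restriction matched to the correct item of that Fact.

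In part (1) there is also a gap in the case $\beta\notin\CC F+\ZZ^d$: emptiness of the index set over $\orbit_A(F)$ only kills the restriction to that single orbit, while $i_F^+\MGM^*_A(U,\beta)$ could a priori be supported on orbits $\orbit_A(F')$ for proper faces $F'\preceq F$, which also lie in $\CC^F$; and \Cref{lem:pi2i} does not ``force'' any vanishing here. The missing step is the paper's observation that no orbit of any face of $F$ meets the fiber support: for $\beta\notin\CC F+\ZZ^d$ this holds because $\CC F'+\ZZ^d\subseteq \CC F+\ZZ^d$ for every $F'\preceq F$, and for $U\nsupseteq\orbit_A(F)$ because $U$ is open and $T_A$-stable while $X_F=\closure{\orbit_A(F)}$, so $U\cap X_F=\emptyset$. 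With that, $\fSupp i_F^+\MGM^*_A(U,\beta)=X_F\cap \fSupp\MGM^*_A(U,\beta)=\emptyset$, and vanishing follows from \cite[Cor.~3.6]{Ste19}. Your worry about verifying $\orbit_A(F)\subseteq\fSupp\MGM^*_A(U,\beta)$ in part (2) is legitimate but secondary; once $U\supseteq\pi_F^{-1}(i_F^{-1}(U))$ is arranged, the paper simply quotes \Cref{fact:mgm*mgm} together with additivity, and when the index set is empty both sides vanish anyway.
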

\begin{proof}
	We only prove the dual MGM case. The MGM case follows by duality together with \Cref{lem:pi2i}. For ease of notation, set $\pi=\pi_F$ and $i=i_F$.
	
	\eqref{th:pr/i.bad} If $\beta\notin \CC F+ \ZZ^d$ or $U\nsupseteq \orbit_A(F)$, then $\orbit_A(F)$ does not intersect the fiber support of $\MGM^*(U,\beta)$---the former by \Cref{fact:mgm*mgm}\eqref{fact:mgm*fsupp} and the latter by \Cref{lem:gen-plus-dag}\eqref{lem:gen-plus-dag.2}. Therefore, no orbit corresponding to a face of $F$ intersects the fiber support of $\MGM^*(U,\beta)$. Hence, $\fSupp i^+\MGM^*(U,\beta) = X_F \cap i^{-1}(\fSupp \MGM^*(U,\beta)) = \emptyset$, and therefore $i^+\MGM^*(U,\beta)=0$.%, i.e.\ $i^{-1}(U)\cap T_F=\emptyset$%, then $\iota'$ is the zero map, and therefore $i^+\MGM^*_A(U,\beta)$ vanishes by \eqref{eq:mgmrestr-pf}.
	
% 	If $\beta\notin \CC F+ \ZZ^d$, then \Cref{fact:mgm*mgm} implies that $\orbit_A(F)$ is not contained in the fiber support of $\MGM^*(U,\beta)$. Therefore, \Cref{cor:gen-plus-dag}\eqref{cor:gen-plus-dag.relop} allows us to  $i^{-1}(U)\cap T_F=i^{-1}(U\cap \orbit_A(F))=\emptyset$ by \Cref{fact:mgm*mgm}, and therefore, $i^+\MGM^*_A(U,\beta)=0$. 

	\eqref{th:pr/i.good} Assume $U\supseteq \orbit_A(F)$. By \Cref{lem:enlargeU}, we may replace $U$ with $U\cup \pi^{-1}(i^{-1}(U))$ (note that this leaves $i^{-1}(U)$ unchanged) to assume that $U\supseteq \pi^{-1}(i^{-1}(U))$. In addition, $\pi(T_A)=T_F$, which is locally closed in $\CC^F$. Therefore, \Cref{prop:tqe-adi}(\ref{prop:tqe-adi.plus-dag}) applies to give %(in the notation of \Cref{prop:tqe-adi})
	\begin{equation*}%\label{eq:mgmrestr-pf}
	    i^+\MGM_A^*(U,\beta) \cong \varpi'_+\iota'_\dagger(i\circ\varphi_F)^+\MGM_A^*(U,\beta),
	\end{equation*}
	where $\varphi_F\colon T_F\hookrightarrow \CC^F$ is the torus embedding induced by $F$, $\iota'\colon i^{-1}(U)\cap T_F \hookrightarrow i^{-1}(U)$ is the restriction of $\varphi_F$, and $\varpi'$ is the inclusion $i^{-1}(U)\hookrightarrow \CC^F$.
	
	 By assumption, $i^{-1}(U)\cap T_F=T_F$, and therefore $i\circ\varphi'$ is just the inclusion $T_F=\orbit_A(F)\hookrightarrow \CC^n$. Now use \Cref{fact:mgm*mgm} together with the additivity of the $D$-module functors. This proves that $i_F^+\MGM^*_A(U,\beta)$ is isomorphic to the requisite direct sum. That this does not vanish if $\beta\in \CC F + \ZZ^d$ is because in such a case the direct sum is over a nonempty set.
\end{proof}

% ------------------------------------------------------------------------- %
\subsection{Normal case}\label{subsec:normal}
% ------------------------------------------------------------------------- %
Throughout this section, $S_A$ is assumed to be normal. \Cref{lem:gamma} is a technical \namecref{lem:gamma} which we will use (both in this \namecref{subsec:normal} and in \S\ref{sec:duality}) to move a parameter $\beta$ within the class of those parameters whose $A$-hypergeometric system is isomorphic to that of $\beta$. \Cref{lem:lambda} will be needed in the proof of \Cref{th:normal-pr/i}. Recall from \Cref{def:pisf} the definition of the primitive integral support functions $h_G$.

\begin{lemma}\label{lem:gamma}
	Let $\beta\in \CC^d$. Then there exists a $\gamma\in \ZZ^d$ such that for all facets $G\preceq A$,
	\begin{enumerate}
		\item $h_G(\gamma)\neq 0$ if $h_G(\beta)\notin \ZZ$;
		\item $h_G(\gamma)>0$ if $h_G(\beta)\in \NN$; and
		\item $h_G(\gamma)<0$ if $h_G(\beta)\in \ZZ_{<0}$.
	\end{enumerate}
\end{lemma}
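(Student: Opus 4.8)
The plan is to reduce the statement to finding a suitable lattice point in an open rational convex cone, and then to exploit the fact that the sign pattern of $\bigl(h_G(\beta)\bigr)_G$ is not arbitrary but is forced by coming from an actual vector $\beta$. First I would partition the facets of $A$ into three sets: $P$ (those $G$ with $h_G(\beta)\in\NN$), $N$ (those with $h_G(\beta)\in\ZZ_{<0}$), and $I$ (those with $h_G(\beta)\notin\ZZ$); since every complex number is either a non-integer, a nonnegative integer, or a negative integer, these exhaust all facets. Extending each $h_G$ $\CC$-linearly to $\CC^d$ (and thus $\RR$-linearly to $\RR^d$), consider the open convex cone
\[ C \coloneqq \{\, v\in\RR^d : h_G(v)>0 \text{ for } G\in P,\ h_G(v)<0 \text{ for } G\in N \,\}. \]
It then suffices to produce a $\gamma\in\ZZ^d\cap C$ with $h_G(\gamma)\neq 0$ for every $G\in I$: conditions (2) and (3) of the lemma hold by the defining inequalities of $C$, and condition (1) is precisely $h_G(\gamma)\neq 0$.

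The heart of the argument — and the step I expect to be the only genuinely delicate one — is showing that $C\neq\emptyset$; this is exactly where the hypothesis that the pattern comes from a genuine $\beta$ is needed (for an arbitrary sign pattern $C$ can be empty, e.g.\ for the cone over a square, whose four facet functionals satisfy a linear relation). Set $\gamma_0\coloneqq\vec{a}_1+\cdots+\vec{a}_n\in\ZZ^d$. For every facet $G$ we have $h_G(\gamma_0)=\sum_i h_G(\vec{a}_i)\geq 1$, since each $h_G(\vec{a}_i)\in\NN$ by \Cref{def:pisf}(2) and $h_G$ is not identically zero by \Cref{def:pisf}(1). Now let $\beta_0\coloneqq\operatorname{Re}(\beta)\in\RR^d$, so that $h_G(\beta_0)=\operatorname{Re}\bigl(h_G(\beta)\bigr)$; for $G\in P\cup N$ the value $h_G(\beta)$ is a real integer, so $h_G(\beta_0)=h_G(\beta)\geq 0$ when $G\in P$ and $h_G(\beta_0)=h_G(\beta)\leq -1$ when $G\in N$. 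Choosing $\epsilon>0$ small enough that $\epsilon\, h_G(\gamma_0)<1$ for every $G\in N$ (any $\epsilon>0$ if $N=\emptyset$), the point $\beta_0+\epsilon\gamma_0$ has $h_G(\beta_0+\epsilon\gamma_0)\geq\epsilon>0$ for $G\in P$ and $h_G(\beta_0+\epsilon\gamma_0)\leq -1+\epsilon h_G(\gamma_0)<0$ for $G\in N$, so it lies in $C$; hence $C\neq\emptyset$.

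Finally I would pass to a lattice point. Since $C$ is open and nonempty and each $h_G$ with $G\in I$ is a nonzero linear form, the set $C\setminus\bigcup_{G\in I}\{h_G=0\}$ is obtained from a nonempty open set by removing finitely many hyperplanes, so it is again open and nonempty; as $\QQ^d$ is dense in $\RR^d$, it contains some $q\in\QQ^d$, and clearing denominators gives $\gamma\coloneqq Mq\in\ZZ^d$ for a suitable $M\in\ZZ_{>0}$. Because $C$ is a cone and the $h_G$ are linear, $\gamma\in C$ and $h_G(\gamma)=M\,h_G(q)\neq 0$ for every $G\in I$, so $\gamma$ satisfies all three required conditions. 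Everything here apart from the nonemptiness of $C$ is routine convexity together with the density of $\QQ^d$.
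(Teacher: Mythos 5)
Your proof is correct, but the way you force the strict inequalities is genuinely different from the paper's. Both arguments start, in effect, from a real vector realizing the integer part of the sign pattern: your $\beta_0=\operatorname{Re}(\beta)$, and in the paper a real solution $\alpha$ of the system $h_G(x)=h_G(\beta)$ taken over the facets with $h_G(\beta)\in\ZZ$ (one may just as well take $\alpha=\operatorname{Re}(\beta)$). From there the paper works in the dual space $(\RR^d)^*$: it splits the facet functionals into those nonnegative and those negative at $\alpha$, notes that the cones they generate meet only in $0$, and gets $\gamma$ from a separating-hyperplane argument on a compact affine slice of the dual cone, which makes \emph{every} $h_G(\gamma)$ strictly positive or strictly negative in one stroke. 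You instead strictify by a perturbation in the primal space: $\gamma_0=\sum_i\vec{a}_i$ satisfies $h_G(\gamma_0)\geq 1$ for every facet, so $\beta_0+\epsilon\gamma_0$ pushes the values in $\NN$ strictly above $0$, while integrality (values in $\ZZ_{<0}$ are $\leq -1$) lets a small $\epsilon$ keep those strictly negative; the non-integer facets are then handled generically by avoiding finitely many hyperplanes before passing to $\QQ^d$ and clearing denominators (the paper needs no genericity step, since its separation already gives strict signs for all facets). Your route is more elementary and essentially constructive, avoiding convex separation and compactness, at the cost of the integrality bound and the genericity choice. One small point to tighten: $h_G(\gamma_0)\geq 1$ requires not merely that $h_G\not\equiv 0$ but that the columns of $A$ span $\RR^d$ (the standing assumption $\ZZ A=\ZZ^d$), so that $h_G$ cannot vanish on all $\vec{a}_i$; with that remark the argument is complete.
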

\begin{proof}
	Consider the system of equations 
	\[\Set{h_G(x)=h_G(\beta)| G\preceq A\text{ is a facet with }h_G(\beta)\in \ZZ}.\] 
	This has a solution in $\CC^d$, namely $\beta$, and therefore has a solution in $\RR^d$. Let $\alpha$ be one such solution. Then $\alpha$ describes a hyperplane
	\[ H_\alpha = \set{f\in (\RR^d)^*| f(\alpha)=0}.\]
	Denote by $H_\alpha^{\geq0}$ the set of $f\in (\RR^d)^*$ such that $f(\alpha)\geq 0$, and similarly for $H_\alpha^{>0}$, $H_\alpha^{\leq0}$, and $H_\alpha^{<0}$.
	
	Let us now consider the sets $P_\alpha = \set{h_G| h_G(\alpha)\geq0}$ and $N_\alpha = \set{h_G| h_G(\alpha)<0}$. By construction, $\RR_{\geq0}P_\alpha\cap\RR_{\geq0}N_\alpha=\{0\}$. Let $Z$ be an affine hyperplane in $(\RR^d)^*$ transverse to the dual cone $(\RR_{\geq0} A)^\vee$, and assume that the intersection $Z\cap (\RR_{\geq0} A)^\vee$ is nonempty. Then $Z\cap \RR_{\geq0}P_\alpha$ and $Z\cap \RR_{\geq0}N_\alpha$ are convex, compact, and disjoint. Hence, there exists a hyperplane $L$ in $Z$ separating $Z\cap \RR_{\geq0}P_\alpha$ and $Z\cap \RR_{\geq0}N_\alpha$. Choose a $\gamma\in \RR^d$ such that $H_\gamma\cap Z=L$ and $H_\gamma^{>0}\supseteq Z\cap \RR_{\geq0}P_\alpha$. Then $H_\gamma^{<0}\supseteq Z\cap \RR_{\geq0}N_\alpha$. Then by convexity, $H_\gamma^{>0}\supseteq \RR_{\geq0}P_\alpha$ and $H_\gamma^{<0}\supseteq \RR_{\geq0}N_\alpha$. In particular, $H_\gamma^{>0} \supseteq P_\alpha$ and $H_\gamma^{<0}\supseteq N_\alpha$. Because $\QQ^d$ is dense in $\RR^d$, we may modify $\gamma$ so that it is in $\QQ^d$. Clearing denominators, we may take $\gamma$ to be in $\ZZ^d$.
\end{proof}

Note that because we are in the normal case, we may define
\begin{equation}\label{eq:sRes}
	\sRes(A) = \CC^d\setminus\Set{\beta\in \CC^d | h_G(\beta)\geq 0 \text{ whenever } h_G(\beta)\in \ZZ}.
\end{equation}
We will take this as the definition of $\sRes(A)$ since we are only dealing with normal $A$. However, \eqref{eq:sRes} follows from the general definition given in \cite{SW09} by applying \cite[Th.~9.3 and Lem.~9.1]{Ste19} along with \cite[Cor.~3.8]{SW09}

\begin{lemma}\label{lem:lambda}
	Let $\beta\in \CC F + \ZZ^d$, and let $F\preceq A$ be a face. Then there exists a $\lambda\in \CC F \cap (\beta + \ZZ^d)$ such that for all facets $F'$ of $F$,
	\begin{enumerate}
		\item\label{lem:lambda.+} $h_{F'}(\lambda)\in \NN$ implies that $h_G(\beta)\in \NN$ for all facets $G$ of $A$ with $G\cap F = F'$; and
		\item\label{lem:lambda.-} $h_{F'}(\lambda)\in \ZZ_{<0}$ implies that $h_G(\beta)\in \ZZ_{<0}$ for all facets $G$ of $A$ with $G\cap F = F'$.
	\end{enumerate}
	%\[ h_{F'}(\lambda)\in \NN \qquad \text{if and only if}\qquad \parbox{2in}{$h_G(\beta)\in \NN$ for all facets $G$ of $A$ with $G\cap F = F'$.}\]
\end{lemma}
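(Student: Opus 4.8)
The plan is to mimic the proof of \Cref{lem:gamma}, replacing the cone $\RR_{\geq0}A$ by the cone $\RR_{\geq0}F$ of the face $F$ and carrying along the extra constraint that $\lambda$ lie in the prescribed coset $\CC F\cap(\beta+\ZZ^d)$. First I would use normality to simplify that constraint: since $S_A$ is normal, every face of $A$ is saturated in $\ZZ^d$, i.e.\ $\ZZ F=\ZZ^d\cap\RR F$, so $\ZZ^d\cap\CC F=\ZZ F$, and after fixing any $\beta_F\in\CC F\cap(\beta+\ZZ^d)$ (nonempty by hypothesis) the coset in question is exactly $\beta_F+\ZZ F$. Thus the task becomes: choose $w\in\ZZ F$ so that $\lambda=\beta_F+w$ satisfies conditions (1) and (2), where $h_{F'}$ denotes the primitive integral support function of $F'$ inside $F$ (with respect to $\ZZ F$).

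Next I would record the relevant comparisons of support functions. Fix a facet $F'\preceq F$ and a facet $G\preceq A$ with $G\cap F=F'$. Then $h_G|_{\RR F}$ is a nonnegative linear form vanishing precisely on $\RR F'$, so $h_G|_{\RR F}=c_G\,h_{F'}$ for a positive integer $c_G$; saturation of $G$ gives $\ker h_G=\ZZ G$ and saturation of $F'$ gives $\ZZ F\cap\ZZ G=\ZZ F'$, whence one computes $c_G=[\ZZ^d:\ZZ F+\ZZ G]$. Because $\lambda-\beta\in\ZZ^d$ and $h_G$ is $\ZZ$-valued on $\ZZ^d$, we get $h_G(\beta)\equiv h_G(\lambda)=c_G\,h_{F'}(\lambda)\pmod\ZZ$ for every $\lambda\in\CC F$, and similarly $h_{F'}(\lambda)\equiv h_{F'}(\beta_F)\pmod\ZZ$ for $\lambda\in\beta_F+\ZZ F$. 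In particular, if $h_{F'}(\beta_F)\notin\ZZ$ then $h_{F'}(\lambda)\notin\ZZ$ for every admissible $\lambda$, so (1) and (2) hold vacuously at $F'$ regardless of the choice; the real content of the lemma concerns the facets $F'$ with $h_{F'}(\beta_F)\in\ZZ$.

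For those facets I would run the separation argument of \Cref{lem:gamma} verbatim but inside $\RR F$: choose a real point $\alpha\in\RR F$ with $h_{F'}(\alpha)=h_{F'}(\beta_F)$ for every facet $F'\preceq F$ with $h_{F'}(\beta_F)\in\ZZ$ (the complex linear system has the solution $\beta_F$, hence a real solution), split the primitive support functions of $F$ into $P_\alpha=\{h_{F'}:h_{F'}(\alpha)\geq0\}$ and $N_\alpha=\{h_{F'}:h_{F'}(\alpha)<0\}$, strictly separate $\RR_{\geq0}P_\alpha$ from $\RR_{\geq0}N_\alpha$ by slicing with a hyperplane transverse to $(\RR_{\geq0}F)^\vee$, and obtain (after rationalizing and clearing denominators) a $w\in\ZZ F$ with $h_{F'}(w)>0$ on $P_\alpha$ and $h_{F'}(w)<0$ on $N_\alpha$. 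Taking $\lambda=\beta_F+Nw$ with $N\gg0$ then makes, for each $F'$ with $h_{F'}(\beta_F)\in\ZZ$, the integer $h_{F'}(\lambda)$ nonnegative (resp.\ negative) exactly when $h_{F'}(\alpha)=h_{F'}(\beta_F)\geq0$ (resp.\ $<0$), and one reads off (1) and (2) from the congruences above.

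The step I expect to be the genuine obstacle is precisely this last translation: inferring the sign of $h_G(\beta)$ for \emph{every} facet $G$ of $A$ lying over $F'$ from the single sign of $h_{F'}(\lambda)$. This forces one to understand, using normality in an essential way, how several facets of $A$ can restrict to the same facet $F'$ of $F$ and how the hypothesis $\beta\in\CC F+\ZZ^d$ constrains the corresponding integers $h_G(\beta)$; getting this bookkeeping, and the accompanying choice of the auxiliary real point $\alpha$, exactly right — so that the separating vector $w$ can be found inside the lattice $\ZZ F$ and not merely in $\RR F$ — is where the argument is most delicate.
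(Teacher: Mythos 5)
Your preparatory reductions are all sound and consistent with what the paper uses implicitly: saturation gives $\CC F\cap(\beta+\ZZ^d)=\beta_F+\ZZ F$; $h_G|_{\RR F}=c_G\,h_{F'}$ with $c_G\in\ZZ_{>0}$; integrality transfers through the congruences; and the facets $F'$ with $h_{F'}(\beta_F)\notin\ZZ$ are vacuous. The gap is the final ``one reads off (1) and (2) from the congruences'' step, and it is not a technicality --- it is the entire content of the lemma. Your separation argument chooses $w$, hence the sign of $h_{F'}(\lambda)$ at the integral facets, according to the sign of $h_{F'}(\beta_F)$, i.e.\ according to the value of $h_G$ \emph{restricted to $\CC F$} at the coset representative $\beta_F$. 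But the conclusion concerns the signs of $h_G(\beta)$ at the original parameter $\beta$, and those numbers never enter your construction. The congruence $h_G(\beta)\equiv c_G h_{F'}(\lambda)\pmod\ZZ$ transfers integrality only; writing $h_G(\beta)=c_G h_{F'}(\beta_F)+h_G(\beta-\beta_F)$ with $\beta-\beta_F$ an arbitrary integer vector, the sign of $h_{F'}(\beta_F)$ carries no information about the sign of $h_G(\beta)$, and distinct facets $G,G'$ with $G\cap F=G'\cap F=F'$ restrict to proportional functionals on $\CC F$ yet can take values of different signs at $\beta$. So from $h_{F'}(\lambda)\in\NN$ your argument yields $h_G(\beta)\in\ZZ$, not $h_G(\beta)\in\NN$. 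You flagged exactly this translation as the expected obstacle; it is the step the proposal leaves unproved.

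For comparison, the paper's proof is organized precisely around importing the sign data of $\beta$ on the facets of $A$ into the lattice $\ZZ F$ \emph{before} any choice of $\lambda$ is made. First, \Cref{lem:gamma} replaces $\beta$ by an integral $\gamma$ whose signs $h_G(\gamma)\gtrless 0$ reproduce the classes ($\NN$, $\ZZ_{<0}$, non-integral) of the $h_G(\beta)$. Then Step 1, after reducing by induction to the case that $F$ is a facet of $A$ (so that each facet $F_i$ of $F$ has a unique facet $G_i$ of $A$ with $G_i\cap F=F_i$), compares the two central arrangements $\{\RR F_i\}$ in $\RR F$ and $\{\RR G_i\}$ in $\RR^d$ chamber by chamber to produce a lattice point $\alpha\in\ZZ F$ with $h_{F_i}(\alpha)\geq 0$ exactly when $h_{G_i}(\gamma)\geq 0$. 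Finally, Step 2 sets $\lambda=\lambda_0+\alpha$, where $\lambda_0\in\CC F\cap(\beta+\ZZ^d)$ is pushed far enough into the cone of $F$ (by adding multiples of $\sum_{\vec a_i\in F}\vec a_i$) that the integrality of $h_{F'}(\lambda)$ is decided by $\lambda_0$ while its sign is decided by $\alpha$, hence by the signs attached to $\beta$. Your separating-hyperplane construction is essentially \Cref{lem:gamma} rerun inside $\RR F$; it could substitute for the paper's Step 1 only if you feed it the target signs $h_G(\gamma)$ of the facets of $A$ lying over each $F'$ rather than the signs $h_{F'}(\beta_F)$, and organizing that matching when several facets of $A$ lie over the same facet of $F$ is exactly where the real work (and the paper's reduction to the facet case) is concentrated.
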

\begin{proof}
	\begin{steps}
		\item\label{lambdaStep.ZZ} The \namecref{lem:lambda} holds for $\beta\in \ZZ^d$.
	
		\emph{Proof of Step~\ref*{lambdaStep.ZZ}.} By induction on the rank of $F$, we may assume that $F$ is a facet of $A$. Let $F_1,\ldots, F_\ell$ be the facets of $F$. For each $i$, let $G_i$ be the facet of $A$ whose intersection with $F$ is $F_i$. For each $I\subseteq \{1,\ldots, \ell\}$, consider the sets
		\begin{align*}
			X_I	&\coloneqq \Set{x\in\RR F | h_{F_i}(x)\geq 0 \text{ for all }i\in I}\\
			Y_I	&\coloneqq \Set{x\in\RR^d | h_{G_i}(x)\geq 0 \text{ for all }i\in I}.
		\end{align*}
		When $X_I$ is nonempty, neither is $Y_I$, and $X_I$ and $Y_I$ are chambers of the arrangments $\{\RR F_1,\ldots, \RR F_\ell\}$ and $\{\RR G_1,\ldots,\RR G_\ell\}$, respectively. But these two arrangements are combinatorially equivalent by construction, so they have the same number of chambers. Hence, $X_I$ is nonempty if and only if $Y_I$ is nonempty. Since both arrangements are central, $X_I\cap \ZZ F$ is nonempty if and only if $Y_I\cap \ZZ^d$ is nonempty. Therefore, if $\beta\in Y_I$, then any $\lambda\in X_I\cap \ZZ F$ has the required properties. This finishes the proof of Step~\ref*{lambdaStep.ZZ}.
		
		\item\label{lambdaStep.gen} The \namecref{lem:lambda} holds for general $\beta$.
		
		\emph{Proof of Step~\ref*{lambdaStep.gen}.} Apply \Cref{lem:gamma} to $\beta$ to get a $\gamma\in \ZZ^d$. Apply Step~\ref{lambdaStep.ZZ} to $\gamma$ to get an $\alpha\in \ZZ F$. Let $\lambda_0\in \CC F \cap (\beta + \ZZ^d)\setminus \sRes(A)$. By adding sufficiently many copies of $\sum_{\vec{a}_i\in F} \vec{a}_i$ to $\lambda_0$, we may assume that 
		\begin{equation}\label{eq:abs}
			h_{F'}(\lambda_0) \geq \abs{h_{F'}(\alpha)}
		\end{equation}
		for all facets $F'$ of $F$ with $h_{F'}(\lambda_0)\in \ZZ$. Set $\lambda= \lambda_0 + \alpha$. Let $F'$ be a facet of $F$, and let $G$ be a facet of $A$ with $G\cap F = F'$. 
		
		Suppose $h_{F'}(\lambda)\in \NN$. Then because $h_{F'}(\alpha)\in \ZZ$, $h_{F'}(\lambda_0)$ must be an integer and therefore a non-negative integer. Then by \eqref{eq:abs}, $h_{F'}(\alpha)\geq0$. Hence, $h_G(\gamma)\geq 0$, which by construction of $\gamma$ means that $h_G(\beta)\in \NN$. 
		
		Next, suppose $h_{F'}(\lambda)\in \ZZ_{<0}$. As before, this implies that $h_{F'}(\lambda_0)$ is a non-negative integer. But then $h_{F'}(\alpha)$ must be negative. Hence, $h_G(\gamma)\leq 0$, which by construction of $\gamma$ means that $h_G(\beta)\in \ZZ_{<0}$. This finishes the proof of Step~\ref*{lambdaStep.gen} and thereby the \namecref{lem:lambda}.\qedhere
	\end{steps}
\end{proof}

The following example shows that even if $h_G(\beta)\in \ZZ$ for every facet $G$ of $A$ with $G\cap F = F'$, it is still possible that $h_{F'}(\lambda)\notin \ZZ$.

\begin{example}
	Let
	\[ A = \begin{bmatrix}
		1& 1& 1\\
		0& 1& 2
	\end{bmatrix}\qquad \text{and}\qquad F = \begin{bmatrix}1\\2\end{bmatrix}.\]
	The only facet of $F$ is $\emptyset$, and the only facet of $A$ whose intersection with $F$ is $\emptyset$ is the facet $G=[1,0]^\top$. The primitive integral support functions of these facets are $h_{\emptyset, F}(c,2c)=c$ and $h_{G,A}(a,b)=b$. Then $h_{G, A}(c,2c)=2c$, so $h_{G,A}|_{\CC F} = 2 h_{\emptyset, F}$.
	
	Consider the parameter $\beta=(1/2,1)$. This parameter is already in $\CC F$. Since $h_{\emptyset, F}(\beta)=1/2$ is not in $\ZZ$, the same is true of $h_{\emptyset, F}(\lambda)$ for every $\lambda \in \CC F \cap (\beta + \ZZ^2)$. However, $h_{G, A}(\beta)= 2\in \ZZ$. 
\end{example}

Recall that $n_{A/F}$ is the number of columns of $A$ which are not in $F$; equivalently, $n_{A/F} = n - \dim\CC^F$.

\begin{theorem}\label{th:normal-pr/i}
	Assume $S_A$ is normal, let $F\preceq A$ be a face, and let $\beta\in \CC^d$.
	\begin{enumerate}
	    \item\label{th:normal-pr/i.pr} If $\beta\in \CC F + \ZZ^d$ and $h_G(\beta)\in \ZZ_{<0}$ for every facet $G\succeq F$, then there exists a $\lambda\in \CC F \cap (\beta + \ZZ^d)$ such that 
	    \[\pi_{F+}\shM_A(\beta) \cong \shM_F(\lambda)\otimes_\CC \Bigwedge \CC^{d_{A/F}}[n_{A/F}-d_{A/F}];\]
	    otherwise, $\pi_{F+}\shM_A(\beta) = 0$.
	    
	    \item\label{th:normal-pr/i.i} If $\beta\in \CC F + \ZZ^d$ and $h_G(\beta)\in \ZZ_{<0}$ for every facet $G\succeq F$, then there exists a $\lambda\in \CC F \cap (\beta + \ZZ^d)$ such that
	    \[i_F^+\shM_A(\beta) \cong \shM_F(\lambda)\otimes_\CC \Bigwedge \CC^{d_{A/F}}[-n_{A/F}];\]
	    otherwise, $i_F^+\shM_A(\beta) = 0$.
	\end{enumerate}
\end{theorem}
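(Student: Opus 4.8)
The plan is to reduce \Cref{th:normal-pr/i} to \Cref{th:pr/i} by identifying $\shM_A(\beta)$ (up to Fourier--Laplace transform) with an appropriate mixed or dual mixed Gauss--Manin system, then transporting the conclusions of \Cref{th:pr/i} through $\FL^{-1}$ and through the coordinate projection/inclusion. Concretely, since $S_A$ is normal, $\hat\shK^A_\bullet(S_A;E_A-\beta)$ is a resolution of $\hat\shM_A(\beta)$, so the Euler--Koszul complex and the hypergeometric system agree in the derived category; the first step is to use \Cref{cor:mgm-mgm*} together with the known vanishing/nonvanishing criteria (phrased via the primitive integral support functions $h_G$, using \eqref{eq:sRes} and \cite[Lem.~9.1]{Ste19}) to determine when $\hat\shM_A(\beta)$ is dual mixed Gauss--Manin and when $\hat\shK^A_\bullet(S_A;E_A-\beta)$ is mixed Gauss--Manin. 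Here the hypothesis ``$h_G(\beta)\in\ZZ_{<0}$ for every facet $G\succeq F$'' is exactly what is needed, via \Cref{lem:lambda}, to guarantee that after choosing a suitable $\lambda\in\CC F\cap(\beta+\ZZ^d)$ the relevant face $F$ sits in the fiber support (resp.\ cofiber support) in the right way.

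The second step is to apply \Cref{th:pr/i} with the face $F$ and a $T_A$-stable open neighborhood $U$ large enough to contain $\orbit_A(F)$ — for instance $U$ chosen so that $\MGM_A^*(U,\beta)\qi\hat\shM_A(\beta)$ (this is where \Cref{lem:gamma} and the freedom to move $\beta$ within its $A$-hypergeometric class is used). \Cref{th:pr/i}\eqref{th:pr/i.good} then computes $i_F^+\MGM_A^*(U,\beta)$ and $\pi_{F+}\MGM_A(U,\beta)$ as direct sums, indexed by $\lambda+\ZZ F$ with $\beta-\lambda\in\ZZ^d$, of $\MGM^*_F$ resp.\ $\MGM_F$ of the corresponding parameter, tensored with $\Bigwedge\CC^{d_{A/F}}$ and shifted. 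The key point is that under the running hypotheses the indexing set collapses: by the construction of $\lambda$ via \Cref{lem:lambda}, exactly one of the summands is itself a (dual) mixed Gauss--Manin system equal (after $\FL$) to the honest $F$-hypergeometric system $\shM_F(\lambda)$ — the others vanish or are absorbed — so the direct sum reduces to the single term $\shM_F(\lambda)\otimes_\CC\Bigwedge\CC^{d_{A/F}}$ up to shift.

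The third step is bookkeeping of the cohomological shifts. We must relate $i_F^+$ and $\pi_{F+}$ on $\CC^n$ to the same operations on $\FL^{-1}$, accounting for the defining shift $f^+=\Lder f^*[\dim X-\dim Y]$ and the fact that $\FL$ interchanges $\pi_{F+}$-type and $i_F^+$-type operations, together with the dimension differences $n_{A/F}=n-\dim\CC^F$ and $d_{A/F}=d-\operatorname{rank}F$. Matching the shift $[-d_{A/F}]$ appearing in \Cref{th:pr/i} against the Fourier--Laplace bookkeeping should produce precisely $[n_{A/F}-d_{A/F}]$ in the projection case and $[-n_{A/F}]$ in the restriction case. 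Finally, the ``otherwise'' clauses follow from \Cref{th:pr/i}\eqref{th:pr/i.bad}: if $\beta\notin\CC F+\ZZ^d$ the indexing set is empty, while if $\beta\in\CC F+\ZZ^d$ but some facet $G\succeq F$ has $h_G(\beta)\notin\ZZ_{<0}$, then $\orbit_A(F)$ fails to meet the fiber support (resp.\ cofiber support) of the relevant Gauss--Manin system — by the support computation of \cite[Lem.~9.1]{Ste19} and \Cref{fact:mgm*mgm} — forcing the projection (resp.\ restriction) to vanish.

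I expect the main obstacle to be the second step: pinning down which single summand survives and showing it really is the classical $\shM_F(\lambda)$ rather than merely a mixed Gauss--Manin system for $F$. This requires carefully combining \Cref{lem:lambda}'s sign conditions on $h_{F'}(\lambda)$ with the ``not $A$-exceptional''/``not rank-jumping'' dichotomy of \Cref{cor:mgm-mgm*} applied now over $F$, and checking that normality of $S_A$ descends to normality of $S_F$ so that the Euler--Koszul resolution is again available for $F$. The shift computation in step three is delicate but mechanical; the vanishing statements in step one and the ``otherwise'' clauses are comparatively routine given the earlier support lemmas.
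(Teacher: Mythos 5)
Your overall route is the paper's: pass through the Fourier--Laplace transform, identify $\hat\shM_A(\beta)$ with a (dual) mixed Gauss--Manin system, apply \Cref{th:pr/i}, collapse the direct sum, and use \Cref{lem:lambda} to land on an honest $F$-hypergeometric system. But two points in your write-up are not right as stated. First, your mechanism for the collapse is wrong: you claim ``exactly one of the summands is \dots\ $\shM_F(\lambda)$ --- the others vanish or are absorbed.'' Nothing vanishes or is absorbed. The sum in \Cref{th:pr/i}(\ref{th:pr/i.good}) is indexed by the cosets $\lambda+\ZZ F\in\CC F/\ZZ F$ with $\beta-\lambda\in\ZZ^d$, and normality forces $\CC F\cap\ZZ^d=\ZZ F$, so this index set is a single coset; that is the entire content of the collapse (for a non-saturated face the extra summands would genuinely survive, each a nonzero twist). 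A minor related slip: \Cref{lem:gamma} is not used to choose $U$ or to move $\beta$ in this argument; it enters only inside the proof of \Cref{lem:lambda}.

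Second, and more seriously, the crux --- showing that the one surviving summand $\MGM^*_F\bigl(i_F^{-1}(U),\lambda_0\bigr)$ is the honest GKZ system $\hat\shM_F(\lambda)$ for a suitable $\lambda$ --- is exactly the step you defer as ``the main obstacle,'' and the tool you propose for it is not the right one. \Cref{cor:mgm-mgm*} (the exceptional/rank-jumping dichotomy) is vacuous here, since normal implies Cohen--Macaulay and there are no rank jumps, and in any case it does not tell you \emph{which} open set yields a GKZ system. What is needed is the characterization from \cite{Ste19} (Th.~9.3 with Lem.~9.1(c)): the dual MGM system on $i_F^{-1}(U)$ is $\hat\shM_F(\lambda)$ precisely when $i_F^{-1}(U)\cap X_F=\fSupp\hat\shM_F(\lambda)$, i.e.\ when for every facet $F'\preceq F$ one has $\orbit_F(F')\subseteq i_F^{-1}(U)$ if and only if $h_{F'}(\lambda)\in\ZZ_{<0}$ (together with the integrality condition on $\lambda$ along $F'$). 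Verifying this is a genuine three-case check ($h_{F'}(\lambda)\in\ZZ_{<0}$, $h_{F'}(\lambda)\in\NN$, $h_{F'}(\lambda)\notin\ZZ$) in which the sign compatibility of \Cref{lem:lambda} between $h_{F'}(\lambda)$ and $h_G(\beta)$ for facets $G$ of $A$ with $G\cap F=F'$ is combined with the hypothesis $h_G(\beta)\in\ZZ_{<0}$ for $G\succeq F$ and with saturation of $F$ (to handle the case $h_{F'}(\lambda)\notin\ZZ$, which must force $\beta\notin\CC F'+\ZZ^d$). Until that verification is written out, your proposal establishes the vanishing clauses and the shape of the answer, but not the isomorphism with $\shM_F(\lambda)$, which is the substance of the theorem.
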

\begin{proof}
    We prove \eqref{th:normal-pr/i.pr}; statement \eqref{th:normal-pr/i.i} is proved similarly.
    
    Recall that the Fourier--Laplace transform interchanges $\pi_{F+}$ and $i_F^+[n_{A/F}]$. Therefore, \eqref{th:normal-pr/i.pr} is equivalent to the following statement (where we recall from \eqref{eq:FLMA} that $\hat\shM_A(\beta)\coloneqq \FL^{-1}(\shM_A(\beta))$):
    \begin{enumerate}
        \item[($\ast$)]\label{th:normal-pr/i.equiv} If $\beta\in \CC F + \ZZ^d$ and $h_G(\beta)\in \ZZ_{<0}$ for every facet $G\succeq F$, then there exists a $\lambda\in \CC F \cap (\beta + \ZZ^d)$ such that 
	    \[i_F^+\hat\shM_A(\beta) \cong \hat\shM_F(\lambda)\otimes_\CC \Bigwedge \CC^{d_{A/F}}[-d_{A/F}];\]
	    otherwise, $i_F^+\hat\shM_A(\beta) = 0$.
    \end{enumerate}
    We prove this Fourier--Laplace transformed statement.
	
	Choose an open subset $U$ of $\CC^n$ such that $U\cap X_A = \fSupp\hat\shM_A(\beta)$. \cite[Th.~9.3]{Ste19} establishes that
	\begin{equation}\label{eq:mgm-vs-gkz}
		\hat\shM_A(\beta) \cong \MGM_A^*(U,\beta).
	\end{equation}
	If $\beta \notin \CC F + \ZZ^d$ or $h_G(\beta)\in \CC \setminus \ZZ_{<0}$ for some facet $G\succeq F$, then $\orbit_A(F)$ is not contained in $U$ by \cite[Lem.~9.1(c)]{Ste19}. \Cref{th:pr/i}(\ref{th:pr/i.bad}) then applies to give that $i_F^+\hat\shM_A(\beta)=0$.
	
	Suppose $\beta\in \CC F + \ZZ^d$ and $h_G(\beta)\in \ZZ_{<0}$ for every facet $G\succeq F$. By normality, the direct sums in \Cref{th:pr/i}(\ref{th:pr/i.good}) collapse to a single summand, giving
	\begin{equation*}
	    i_F^+\hat\shM_A(\beta)\cong \MGM_F^*\bigl(i_F^{-1}(U),\lambda_0\bigr)\otimes_\CC \Bigwedge\CC^{d_{A/F}}[-d_{A/F}],
	\end{equation*}
	where $\lambda_0\in \CC F \cap (\beta + \ZZ^d)$ is arbitrary. Therefore, taking into account \eqref{eq:mgm-vs-gkz}, it remains to show that there exists a $\lambda\in \CC F\cap (\beta + \ZZ^d)$ such that
	\begin{equation*}\label{eq:mgm-vs-Fgkz}
		\MGM_F^*\bigl(i_F^{-1}(U),\lambda_0\bigr) \cong \hat\shM_F(\lambda).
	\end{equation*}
	
	Choose a $\lambda\in \CC F \cap (\beta + \ZZ^d)$ as in \Cref{lem:lambda}. By \cite[Th.~9.3 together with Lem.~9.1(c)]{Ste19}, we need to show for all facets $F'$ of $F$,
	\[ h_{F'}(\lambda)\in \ZZ_{<0} \text{ if and only if } i_F^{-1}(U)\supseteq \orbit_F(F').\]
	Let $F'$ be a facet of $F$. 
	
	If $h_{F'}(\lambda)\in \ZZ_{<0}$, then $h_G(\beta)\in \ZZ_{<0}$ for every facet $G$ of $A$ containing $F'$ by assumption on $\beta$ and by \Cref{lem:lambda}, and $\beta = \lambda + (\beta-\lambda)\in \CC F' + \ZZ^d$; hence, $i_F^{-1}(U)\supseteq \orbit_F(F')$ by \cite[Lem.~9.1(c)]{Ste19}.
	
	If $h_{F'}(\lambda)\in \NN$, then $h_G(\beta)\in\NN$ for some facet $G$ of $A$ containing $F'$, and therefore $i_F^{-1}(U)\nsupseteq \orbit_F(F')$ by \cite[Lem.~9.1(c)]{Ste19}.
	
	Finally, if $\beta\in \CC F' + \ZZ^d$, then $\lambda =\beta + (\lambda-\beta) \in (\CC F' + \ZZ^d)\cap \CC F = \CC F' + \ZZ F$ (because $F$ is saturated), and therefore $h_{F'}(\lambda)\in \ZZ$. Hence, if $h_{F'}(\lambda)\notin \ZZ$, then $\beta\notin \CC F' + \ZZ^d$. Thus, $i_F^{-1}(U)\nsupseteq \orbit_F(F')$ by \cite[Lem.~9.1(c)]{Ste19}.
\end{proof}

Note that \Cref{th:normal-pr/i} only claims the existence of $\lambda$. A possibly interesting question for the future would be to turn the proofs of \Cref{lem:gamma,lem:lambda} into an algorithm for computing such a $\lambda$.

The following corollary follows immediately from \Cref{th:normal-pr/i}:
\begin{corollary}\label{cor:one-or-the-other}
    Assume $S_A$ is normal, let $F\preceq A$ be a face, and let $\beta\in \CC^d$. Then at least one of $i_F^+\shM_A(\beta)$ and $\pi_{F+}\shM_A(\beta)$ is zero.
\end{corollary}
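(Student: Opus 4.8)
The plan is to deduce \Cref{cor:one-or-the-other} directly from \Cref{th:normal-pr/i} by a simple case analysis on the parameter $\beta$ and the face $F$. The key observation is that \emph{both} parts of \Cref{th:normal-pr/i} carry exactly the same hypothesis, namely that $\beta\in \CC F + \ZZ^d$ \emph{and} $h_G(\beta)\in \ZZ_{<0}$ for every facet $G\succeq F$. So there are only two cases: either this combined condition holds, or it fails.

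First I would observe that if the combined condition fails, then by the ``otherwise'' clause of \Cref{th:normal-pr/i}\eqref{th:normal-pr/i.pr} we get $\pi_{F+}\shM_A(\beta) = 0$, and in fact by the ``otherwise'' clause of \Cref{th:normal-pr/i}\eqref{th:normal-pr/i.i} we also get $i_F^+\shM_A(\beta) = 0$ — so in this case both are zero, which is certainly at least one. Hence the only remaining case to rule out is the one where the combined condition \emph{does} hold. But in that case I need to produce a vanishing of one of the two modules, and here is where the actual content lies: I would point to the fact that, in the ``good'' case, \Cref{th:normal-pr/i} produces \emph{nonzero} answers for both $\pi_{F+}\shM_A(\beta)$ and $i_F^+\shM_A(\beta)$ (since $\shM_F(\lambda)$ is a nonzero $\shD_{\CC^F}$-module and $\Bigwedge\CC^{d_{A/F}}$ is nonzero). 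Wait — that would contradict the corollary. So I must re-examine: the corollary as stated cannot follow from \Cref{th:normal-pr/i} in the naive way, which signals that the intended reading of \Cref{th:normal-pr/i} is that the two parts have \emph{complementary} hypotheses, or that the corollary is really about the interplay with \Cref{cor:one-or-the-other}'s counterpart \Cref{cor:one-or-the-other}.

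Reconsidering, the honest plan is: the hypothesis of \Cref{th:normal-pr/i}\eqref{th:normal-pr/i.pr} governing nonvanishing of $\pi_{F+}\shM_A(\beta)$ and that of \eqref{th:normal-pr/i.i} governing nonvanishing of $i_F^+\shM_A(\beta)$ should in fact be \emph{different} — one requires $h_G(\beta)\in\ZZ_{<0}$ for all facets $G\succeq F$, the other requires $h_G(\beta)\in\NN$ for all such $G$ (the Fourier--Laplace transform flips the sign condition via $x\mapsto -x$, cf.\ the duality remark in the introduction). Granting that corrected reading, the proof of \Cref{cor:one-or-the-other} is immediate: $\pi_{F+}\shM_A(\beta)$ can be nonzero only if $\beta\in\CC F+\ZZ^d$ and $h_G(\beta)\in\ZZ_{<0}$ for all facets $G\succeq F$, while $i_F^+\shM_A(\beta)$ can be nonzero only if $\beta\in\CC F+\ZZ^d$ and $h_G(\beta)\in\NN$ for all facets $G\succeq F$. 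Since $F\preceq A$ and $\NN A$ is pointed, there is at least one facet $G$ with $G\succeq F$ (indeed $\RR_{\geq0}F$ is an intersection of facets of $\RR_{\geq0}A$), and for that $G$ we cannot simultaneously have $h_G(\beta)\in\ZZ_{<0}$ and $h_G(\beta)\in\NN$; hence at most one of the two sign conditions can hold, so at most one of the two modules is nonzero.

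The main obstacle, then, is not a deep argument but rather pinning down the precise sign conventions: I would need to carefully track how $\FL^{-1}$ (equivalently the substitution $x\mapsto -x$ together with the interchange of $\pi_{F+}$ with $i_F^+[n_{A/F}]$) converts the condition ``$h_G(\beta)\in\ZZ_{<0}$'' appearing in the proof of \Cref{th:normal-pr/i} into its opposite ``$h_G(\beta)\in\NN$'' for the restriction statement, using \cite[Lem.~9.1]{Ste19} and the fiber/cofiber support computations of \Cref{fact:mgm*mgm}. Once that bookkeeping is done, the case analysis above closes the argument in a few lines, using only that every face of a pointed cone lies on at least one facet and that $\ZZ_{<0}\cap\NN=\emptyset$.
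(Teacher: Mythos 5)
Your proof is correct, and it is essentially the paper's own argument: the paper deduces \Cref{cor:one-or-the-other} immediately from \Cref{th:normal-pr/i}, exactly via the dichotomy you describe, so the only real content is the one you isolated, namely that the nonvanishing criteria in the two parts of \Cref{th:normal-pr/i} are mutually exclusive. Your diagnosis of the misprint is also the right one: as printed, parts \eqref{th:normal-pr/i.pr} and \eqref{th:normal-pr/i.i} carry identical hypotheses, which would make the corollary false rather than immediate; the intended hypothesis for the restriction statement \eqref{th:normal-pr/i.i} is $h_G(\beta)\in\NN$ for every facet $G\succeq F$. Indeed, under Fourier--Laplace the restriction $i_F^+\shM_A(\beta)$ corresponds (up to shift) to a projection of $\hat\shM_A(\beta)$, and its nonvanishing is governed by $\orbit_A(F)\subseteq\cofSupp\hat\shM_A(\beta)$, i.e.\ by the $\NN$-condition of \cite[Lem.~9.1]{Ste19} (compare \Cref{fact:mgm*mgm} and the example in \S\ref{subsec:fsupp}), whereas the projection $\pi_{F+}\shM_A(\beta)$ is governed by the fiber support and hence by the $\ZZ_{<0}$-condition, as in the proof of part \eqref{th:normal-pr/i.pr}; so your assignment of the two sign conditions is correct. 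One small caveat, which applies equally to the corollary as stated in the paper: your last step needs some facet $G\succeq F$ to exist, which holds precisely when $F$ is a proper face; for $F=A$ both conditions are vacuous and $i_F^+\shM_A(\beta)=\pi_{F+}\shM_A(\beta)=\shM_A(\beta)\neq 0$, so the statement should be read with $F\neq A$.
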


% ========================================================================= %
\section{Duality of normal GKZ systems}\label{sec:duality}
% ========================================================================= %
Throughout this section, $S_A$ is assumed to be normal. In \Cref{th:dual}, we assume in addition that $A$ is homogeneous (Recall that $A$ is \emph{homogeneous} if its columns all lie in a hyperplane).

\Cref{lem:beta'} shows that for all parameters $\beta$, there is a parameter $\beta'\in -\beta + \ZZ^d$ such that $\hat\shM_A(\beta')$ has the cofiber support one would expect for the holonomic dual of $\hat\shM_A(\beta)$. \Cref{prop:dual} uses this to prove that this $\hat\shM_A(\beta')$ is indeed the holonomic dual of $\hat\shM_A(\beta)$. The Fourier--Laplace transform of this result, together with a monodromicity argument, gives \Cref{th:dual}.

\begin{lemma}\label{lem:beta'}
	Let $\beta\in \CC^d$. Then there exists a $\beta'\in -\beta+\ZZ^d$ such that 
	\[ \cofSupp\hat{\shM}_A(\beta') = \fSupp \hat{\shM}_A(\beta).\]
	If $\beta$ does not lie on the $\CC$-span of any facet, then $\beta'$ may be taken to be $-\beta$.
\end{lemma}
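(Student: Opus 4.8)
The plan is to exhibit $\beta'$ by a direct construction on the primitive integral support functions, using the combinatorial description of fiber and cofiber support of GKZ systems from \cite[Lem.~9.1]{Ste19}. Recall from that result (in the normal case) that, for a face $G\preceq A$, whether $\orbit_A(G)$ lies in $\fSupp\hat\shM_A(\beta)$ or in $\cofSupp\hat\shM_A(\beta)$ is governed by the sign conditions on the integers $h_{G'}(\beta)$ as $G'$ ranges over facets, together with the membership $\beta\in\CC G+\ZZ^d$. The key symmetry to pin down is that the fiber support of $\hat\shM_A(\beta)$ corresponds to the facets $G$ with $h_G(\beta)\in\ZZ_{<0}$ being ``activated'', while the cofiber support of $\hat\shM_A(\beta')$ corresponds to facets $G$ with $h_G(\beta')\in\NN$ being ``activated''; so I want a $\beta'\in-\beta+\ZZ^d$ for which $h_G(\beta')$ is a nonnegative integer exactly when $h_G(\beta)$ is a negative integer, and $h_G(\beta')\in\ZZ$ exactly when $h_G(\beta)\in\ZZ$ (the latter forcing $\beta'\in\CC G+\ZZ^d\iff\beta\in\CC G+\ZZ^d$ for faces $G$, since in the normal/saturated case $\CC G\cap(\beta'+\ZZ^d)$ behaves well).

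First I would observe that $h_G(-\beta)=-h_G(\beta)$, so $-\beta$ already has the correct behavior for every facet $G$ with $h_G(\beta)\notin\ZZ$ (this is why, when $\beta$ avoids all facet spans, one can simply take $\beta'=-\beta$). The only facets needing correction are those $G$ with $h_G(\beta)\in\ZZ_{\geq 0}$: for such $G$ we have $h_G(-\beta)\in\ZZ_{\leq 0}$, and we need instead $h_G(\beta')\in\NN$ if $h_G(\beta)\in\ZZ_{<0}$ and (in fact for $h_G(\beta)\in\NN$ we'd want $h_G(\beta')$ negative, though care is needed — let me instead phrase the target symmetrically). The clean way: I want $\beta'\in-\beta+\ZZ^d$ such that for all facets $G$, $h_G(\beta')\in\NN\iff h_G(\beta)\in\ZZ_{<0}$ and $h_G(\beta')\in\ZZ_{<0}\iff h_G(\beta)\in\NN$. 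Then apply \cite[Th.~9.3 and Lem.~9.1]{Ste19} to $\hat\shM_A(\beta')$ and compare with the description of $\fSupp\hat\shM_A(\beta)$: $\orbit_A(G)\subseteq\cofSupp\hat\shM_A(\beta')$ iff the relevant facet sign conditions match, which by the dictionary translates exactly into $\orbit_A(G)\subseteq\fSupp\hat\shM_A(\beta)$, giving the desired equality of supports.

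To construct $\beta'$: use \Cref{lem:gamma} applied to $-\beta$ to produce $\gamma\in\ZZ^d$ with $h_G(\gamma)\neq 0$ whenever $h_G(\beta)\notin\ZZ$, $h_G(\gamma)>0$ whenever $-h_G(\beta)\in\NN$ (i.e.\ $h_G(\beta)\in\ZZ_{\leq 0}$), and $h_G(\gamma)<0$ whenever $-h_G(\beta)\in\ZZ_{<0}$ (i.e.\ $h_G(\beta)\in\NN$). Set $\beta'=-\beta+ N\gamma$ for a large positive integer $N$. Then for facets with $h_G(\beta)\notin\ZZ$, $h_G(\beta')\notin\ZZ$ still (since $h_G(-\beta)\notin\ZZ$ and $h_G(N\gamma)\in\ZZ$), matching the ``neither'' case; for facets with $h_G(\beta)\in\ZZ_{<0}$, $h_G(\beta')=-h_G(\beta)+Nh_G(\gamma)$ is a sum of a positive integer and $N$ times a positive integer, hence in $\NN$; for facets with $h_G(\beta)\in\NN$, $h_G(\beta')=-h_G(\beta)+Nh_G(\gamma)$ is a sum of a nonpositive integer and $N$ times a negative integer, hence in $\ZZ_{<0}$ for $N$ large. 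Finally verify $\beta'\in-\beta+\ZZ^d$ (immediate since $N\gamma\in\ZZ^d$), and conclude via the support dictionary above. The main obstacle I anticipate is bookkeeping the face-level (not just facet-level) conditions in \cite[Lem.~9.1]{Ste19} — in particular checking that the $\ZZ$-membership condition $\beta\in\CC G+\ZZ^d$ is preserved under $\beta\mapsto\beta'$ for all faces $G$ (not merely facets), which should follow from $\beta'+\beta=N\gamma\in\ZZ^d$ together with saturatedness of the faces in the normal case, but deserves a careful line.
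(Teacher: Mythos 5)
Your overall strategy (reduce, via \cite[Lem.~9.1]{Ste19}, to constructing $\beta'\in-\beta+\ZZ^d$ with $h_G(\beta')\in\NN\iff h_G(\beta)\in\ZZ_{<0}$ for all facets $G$, and then produce $\beta'$ using \Cref{lem:gamma}) is the right one, and it is essentially the paper's reduction. But your construction of $\beta'$ has a sign error exactly in the only case that requires any work, namely when $h_G(\beta)=0$ for some facet $G$. Applying \Cref{lem:gamma} to $-\beta$: if $h_G(\beta)=0$ then $h_G(-\beta)=0\in\NN$, so the lemma forces $h_G(\gamma)>0$ (your two parentheticals ``$h_G(\beta)\in\ZZ_{\leq0}$'' and ``$h_G(\beta)\in\NN$'' overlap at $0$ and are inconsistent there; the correct reading is the first). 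Hence $h_G(\beta')=0+Nh_G(\gamma)>0$, so $h_G(\beta')\in\NN$ while $h_G(\beta)\notin\ZZ_{<0}$, violating your own stated target. This is not a cosmetic issue: for such a facet $G$ one has $\beta,\beta'\in\CC G+\ZZ^d$, so by \cite[Lem.~9.1]{Ste19} the orbit $\orbit_A(G)$ lies in $\cofSupp\hat\shM_A(\beta')$ but not in $\fSupp\hat\shM_A(\beta)$, and the asserted equality of supports fails. Concretely, for $\beta=0$ your recipe gives $\beta'=N\gamma$ with all $h_G(\beta')>0$, so $\cofSupp\hat\shM_A(\beta')=X_A$ while $\fSupp\hat\shM_A(0)=T_A$.

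The fix is to orient $\gamma$ by the sign pattern of $\beta$, not of $-\beta$, so that the facets with $h_G(\beta)=0$ are pushed to the \emph{negative} side after negating: apply \Cref{lem:gamma} to $\beta$ and set $\beta'=-(\beta+N\gamma)$ (already $N=1$ works, since $h_G(\gamma)\in\ZZ\setminus\{0\}$ whenever $h_G(\beta)\in\ZZ$). Then $h_G(\beta)\in\NN$ gives $h_G(\beta')\leq -N<0$, $h_G(\beta)\in\ZZ_{<0}$ gives $h_G(\beta')>0$, and $h_G(\beta)\notin\ZZ$ gives $h_G(\beta')\notin\ZZ$, which is exactly the needed equivalence. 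This is in substance what the paper does, phrased slightly differently: it first replaces $\beta$ by $\beta+\gamma$, observing that $\hat\shM_A(\beta)\cong\hat\shM_A(\beta+\gamma)$ because the fiber supports agree (\cite[Th.~9.2 and Lem.~9.1(c)]{Ste19}) and $\beta+\gamma$ avoids all facet spans, and then takes $\beta'=-(\beta+\gamma)$; your direct verification route would also be fine once the sign of $\gamma$ is corrected.
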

\begin{proof}
	By \cite[Lem.~9.1(c) and (d)]{Ste19}, it suffices to show that there exists a $\beta'\in -\beta+\ZZ^d$ for all facets $G\preceq A$, 
	\begin{equation*}
		h_G(\beta')\in \NN \quad \text{if and only if}\quad h_G(\beta)\in \ZZ_{<0}.
	\end{equation*}
	Choose $\gamma\in \ZZ^d$ as in \Cref{lem:gamma}. Then $\hat{\shM}_A(\beta)$ and $\hat{\shM}_A(\beta+\gamma)$ have the same fiber support (by \cite[Lem.~9.1(c)]{Ste19}) and are therefore isomorphic by \cite[Th.~9.2]{Ste19}. Moreover, $\beta+\gamma$ does not lie on the $\CC$-span of any facet. Replacing $\beta$ with $\beta+\gamma$, we may assume that $\beta$ itself does not lie on the $\CC$-span of any facet.	
	
	Let $\beta'=-\beta$. Then $h_G(\beta)$ is never zero, so $h_G(\beta')\in \NN$ if and only if $h_G(\beta)\in \ZZ_{<0}$, as hoped.
\end{proof}

\begin{proposition}\label{prop:dual}
	Let $\beta\in \CC^d$. Then there exists a $\beta'\in -\beta+\ZZ^d$ such that $\Ddual\hat{\shM}_A(\beta) \cong \hat{\shM}_A(\beta')$.	If $\beta$ does not lie on the $\CC$-span of any facet, then $\beta'$ may be taken to be $-\beta$.
\end{proposition}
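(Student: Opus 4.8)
The plan is to realize $\hat\shM_A(\beta)$ as a dual mixed Gauss--Manin system, apply holonomic duality (which sends dual MGM systems to MGM systems and negates the monodromy parameter), and then recognize the outcome as $\hat\shM_A(\beta')$ via the support bookkeeping of \Cref{lem:beta'}. First I would reduce to the case in which $\beta$ lies on the $\CC$-span of no facet: applying \Cref{lem:gamma} to $\beta$ produces $\gamma\in\ZZ^d$ such that $\beta+\gamma$ lies on no facet span and $\hat\shM_A(\beta)\cong\hat\shM_A(\beta+\gamma)$ (same fiber support, hence isomorphic by \cite[Th.~9.2]{Ste19}), exactly as in the first paragraph of the proof of \Cref{lem:beta'}. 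Since $-(\beta+\gamma)\in-\beta+\ZZ^d$, it suffices to treat facet-generic $\beta$ and prove $\Ddual\hat\shM_A(\beta)\cong\hat\shM_A(-\beta)$; this also yields the last sentence of the statement.

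So assume $\beta$ lies on no facet span, and choose an open $U\subseteq\CC^n$ with $U\cap X_A=\fSupp\hat\shM_A(\beta)$, so that $\hat\shM_A(\beta)\cong\MGM^*_A(U,\beta)=\varpi_+\iota_\dagger\shO_{T_A}^\beta$ by \cite[Th.~9.3]{Ste19} (notation of \S\ref{subsec:fixed-U}, with $Z=T_A$). Using $\varpi_\dagger=\Ddual\varpi_+\Ddual$, $\iota_\dagger=\Ddual\iota_+\Ddual$, and $\Ddual\shO_{T_A}^\beta\cong\shO_{T_A}^{-\beta}$ (the dual of the rank-one integrable connection $\shO_{T_A}t^{-\beta}$ is $\shO_{T_A}t^{\beta}$), applying $\Ddual$ gives
\[ \Ddual\hat\shM_A(\beta)\;\cong\;\varpi_\dagger\iota_+\shO_{T_A}^{-\beta}\;=\;\MGM_A(U,-\beta). \]
It then remains to verify that $\hat\shM_A(-\beta)$ satisfies the three conditions of \Cref{lem:gen-dag-plus} characterizing $\varpi_\dagger\iota_+\shO_{T_A}^{-\beta}$ for this $U$. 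Since $S_A$ is normal, $-\beta$ is a dual mixed Gauss--Manin parameter by \cite[Th.~9.3]{Ste19}, so $\hat\shM_A(-\beta)$ lies in $\dercat^{\rmb,X_A}_\rmc(\shD_{\CC^n})$, restricts to $\shO_{T_A}^{-\beta}$ on $T_A$ (condition (1), as in the proof of \Cref{lem:gen-plus-dag}), and satisfies $\fSupp\hat\shM_A(-\beta)\cap\cofSupp\hat\shM_A(-\beta)=T_A$ by \Cref{thm:both}(\ref{thm:both.dualMGM}). For the two remaining conditions I would apply \Cref{lem:beta'} both to $\beta$ and to the (also facet-generic) parameter $-\beta$, obtaining $\cofSupp\hat\shM_A(-\beta)=\fSupp\hat\shM_A(\beta)$ and $\fSupp\hat\shM_A(-\beta)=\cofSupp\hat\shM_A(\beta)$. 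Then $\cofSupp\hat\shM_A(-\beta)=\fSupp\hat\shM_A(\beta)=U\cap X_A\subseteq U$ gives condition (2), and, since $\cofSupp\hat\shM_A(\beta)\subseteq X_A$, the chain
\[ \fSupp\hat\shM_A(-\beta)\cap U=\cofSupp\hat\shM_A(\beta)\cap(U\cap X_A)=\cofSupp\hat\shM_A(\beta)\cap\fSupp\hat\shM_A(\beta)=T_A \]
gives condition (3). Hence $\hat\shM_A(-\beta)\cong\MGM_A(U,-\beta)\cong\Ddual\hat\shM_A(\beta)$, finishing the facet-generic case and, via the reduction above, the proposition.

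I expect the main obstacle to be condition (3) of \Cref{lem:gen-dag-plus}, that is, controlling $\fSupp\hat\shM_A(-\beta)\cap U$: this is the step where normality is genuinely used (through \cite[Th.~9.3]{Ste19} and \Cref{thm:both}(\ref{thm:both.dualMGM})) and where one must invoke \Cref{lem:beta'} in the sharper, two-sided form available for facet-generic parameters. The duality manipulations and the reduction to facet-generic $\beta$ are routine by comparison.
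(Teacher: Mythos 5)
Your proof is correct, and its skeleton coincides with the paper's: realize $\hat\shM_A(\beta)$ as $\MGM^*(U,\beta)$ with $U\cap X_A=\fSupp\hat\shM_A(\beta)$, apply $\Ddual$ to obtain $\MGM(U,-\beta)$, and then match this with a GKZ system. Where you diverge is the final identification. The paper simply cites \cite[Th.~9.2]{Ste19} a second time (in the mixed Gauss--Manin direction) together with \Cref{lem:beta'}, which supplies a $\beta'\in-\beta+\ZZ^d$ with $\cofSupp\hat\shM_A(\beta')=\fSupp\hat\shM_A(\beta)=U\cap X_A$; since $\shO_{T_A}^{-\beta}\cong\shO_{T_A}^{\beta'}$, this needs no reduction to facet-generic parameters and only the one-sided support statement of \Cref{lem:beta'}. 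You instead first reduce to the facet-generic case via \Cref{lem:gamma} (exactly as in the proof of \Cref{lem:beta'}), take $\beta'=-\beta$, and verify by hand the uniqueness characterization of \Cref{lem:gen-dag-plus}, using \Cref{thm:both}\eqref{thm:both.dualMGM} and a symmetric, two-sided application of \Cref{lem:beta'} to both $\beta$ and $-\beta$; your support computations for conditions (2) and (3) are right, and the restriction condition does follow from the dual-MGM realization of $\hat\shM_A(-\beta)$ as you say. What your route buys is a more self-contained argument that re-derives, for the module at hand, the MGM-direction identification rather than re-invoking the external theorem; the cost is the extra reduction step, which is genuinely needed in your approach because for a general $\beta$ the $\beta'$ produced by \Cref{lem:beta'} is only guaranteed to satisfy one of the two support equalities you use.
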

\begin{proof}
	By \cite[Th.~9.2]{Ste19}, there exists an open $U\subseteq\CC^n$ with $U\cap X_A=\fSupp \hat{\shM}_A(\beta)$ such that $\hat{\shM}_A(\beta) \cong \MGM^*(U,\beta)$. Applying the holonomic duality functor gives $\Ddual\hat{\shM}_A(\beta) \cong \MGM(U,-\beta)$. Now use \cite[Th.~9.2]{Ste19} again along with \Cref{lem:beta'}.
\end{proof}

\begin{theorem}\label{th:dual}
	Assume that $A$ is homogeneous. Let $\beta\in \CC^d$. Then there exists a $\beta'\in -\beta+\ZZ^d$ such that $\Ddual \shM_A(\beta) \cong \shM_A(\beta')$.	If $\beta$ does not lie on the $\CC$-span of any facet, then $\beta'$ may be taken to be $-\beta$.
\end{theorem}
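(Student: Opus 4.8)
The plan is to deduce \Cref{th:dual} from \Cref{prop:dual} by applying the Fourier--Laplace transform, which is the standard passage in this paper between the ``hatted'' systems $\hat\shM_A(\beta)=\FL^{-1}(\shM_A(\beta))$ and the GKZ systems $\shM_A(\beta)$ themselves. \Cref{prop:dual} already gives $\Ddual\hat\shM_A(\beta)\cong\hat\shM_A(\beta')$ for a suitable $\beta'\in-\beta+\ZZ^d$ (with $\beta'=-\beta$ when $\beta$ avoids the $\CC$-span of every facet). So the first step is simply: apply $\FL$ to both sides. Since $\shM_A(\beta)=\FL(\hat\shM_A(\beta))$ by definition, the right-hand side becomes $\shM_A(\beta')$ immediately, and the only real content is to identify $\FL(\Ddual\hat\shM_A(\beta))$ with $\Ddual\shM_A(\beta)$.

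The crux, then, is the interaction of $\FL$ with holonomic duality $\Ddual$. In general the Fourier--Laplace transform does \emph{not} commute with $\Ddual$ on the nose --- there is a sign/coordinate discrepancy, which is exactly why Takayama's conjecture is stated ``after applying $x\mapsto -x$ in the non-homogeneous case.'' The standard fact (see e.g.\ \cite{Bry86} or the treatment in \cite{htt} via the description of $\FL$ through direct/inverse images as in \cite{DE03}) is that $\Ddual\circ\FL\cong \nu^*\circ\FL\circ\Ddual$, where $\nu\colon\CC^n\to\CC^n$ is the automorphism $x\mapsto-x$; equivalently $\FL(\Ddual\shN)\cong\nu^*\Ddual(\FL\shN)$. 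Applying this with $\shN=\hat\shM_A(\beta)$ gives $\FL(\Ddual\hat\shM_A(\beta))\cong\nu^*\Ddual\shM_A(\beta)$. So the second step is to invoke this compatibility, and the third step is to remove the $\nu^*$: here is where homogeneity of $A$ enters. When $A$ is homogeneous, $\shM_A(\beta)$ is monodromic (equivariant for the scaling $\Gm$-action on $\CC^n$ coming from the common hyperplane containing the columns), and a monodromic $\shD_{\CC^n}$-module is preserved up to isomorphism by $\nu^*$ (for instance because $-1\in\Gm$ lies in the connected group acting, or by a direct check that $\nu^*$ acts trivially on each generalized eigenspace of the Euler vector field). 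Hence $\nu^*\Ddual\shM_A(\beta)\cong\Ddual\shM_A(\beta)$, and combining the three steps yields $\Ddual\shM_A(\beta)\cong\shM_A(\beta')$, with $\beta'=-\beta$ in the generic-facet case exactly as in \Cref{prop:dual}.

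Concretely the argument runs: (i) by \Cref{prop:dual} pick $\beta'\in-\beta+\ZZ^d$ with $\Ddual\hat\shM_A(\beta)\cong\hat\shM_A(\beta')$, taking $\beta'=-\beta$ if $\beta$ is off every facet-span; (ii) apply $\FL$, using $\FL\hat\shM_A(\gamma)=\shM_A(\gamma)$, to get $\FL(\Ddual\hat\shM_A(\beta))\cong\shM_A(\beta')$; (iii) use $\FL\circ\Ddual\cong\nu^*\circ\Ddual\circ\FL$ to rewrite the left side as $\nu^*\Ddual\shM_A(\beta)$; (iv) invoke monodromicity of $\shM_A(\beta)$ (valid since $A$ is homogeneous) to drop $\nu^*$, obtaining $\Ddual\shM_A(\beta)\cong\shM_A(\beta')$.

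The main obstacle I expect is purely bookkeeping around step (iii): pinning down the precise form of the commutation relation between $\FL$ and $\Ddual$ with the correct appearance of $\nu^*$ (and possibly a shift by $[n]$ or a Tate twist, which should cancel or be absorbed), and then making the monodromicity/$\nu^*$-invariance argument in step (iv) airtight --- one must be careful that ``monodromic'' here means equivariant under the $\Gm$ that rescales \emph{all} coordinates simultaneously (which is available precisely because the columns of $A$ lie on a common affine hyperplane, so the vector $(1,\dots,1)$ is in the row span of $A$ up to the homogenizing functional), and that $\nu=-\mathrm{id}$ is the value at $-1\in\Gm$ of this action, hence acts as an isomorphism on any such module. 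Everything else is formal.
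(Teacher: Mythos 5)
Your proposal is correct and follows the same overall strategy as the paper: start from \Cref{prop:dual} and transport it through the Fourier--Laplace transform, using homogeneity (hence monodromicity, via \cite[Lem.~1.13]{Rei14}) to control the interaction of $\FL$ with $\Ddual$. The one place you diverge is the commutation step itself: the paper simply invokes \cite[Prop.~6.13]{Bry86} (in the $D$-module form \cite[Th.~1.4]{Rei14}), which states outright that $\Ddual\FL\shM\cong\FL\Ddual\shM$ for monodromic $\shM$, whereas you re-derive this from the general identity $\Ddual\circ\FL\cong\nu^*\circ\FL\circ\Ddual$ (with $\nu\colon x\mapsto -x$) together with the claim that monodromic modules are $\nu^*$-invariant up to isomorphism. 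That claim is true, but your two justifications are loose as written: monodromicity is strictly weaker than $\Gm$-equivariance, so ``$-1$ lies in the connected acting group'' is not an argument, and $\nu^*$ does not act ``trivially'' on generalized Euler eigenspaces --- one must construct the isomorphism $\shN\to\nu^*\shN$ by hand, e.g.\ by scaling the generalized $E$-eigenspace with eigenvalue $\alpha_0+k$ by $(-1)^k$ on each $\ZZ$-coset of eigenvalues, using that each $x_i$ and $\partial_i$ shifts eigenvalues by $\pm1$. In the situation at hand you could also bypass this entirely: for homogeneous $A$ the presentation \eqref{eq:gkz} is visibly $\nu$-invariant (each binomial $\partial^{\vec{u}_+}-\partial^{\vec{u}_-}\in I_A$ has $|\vec{u}_+|=|\vec{u}_-|$, so $\nu$ rescales it by a common sign, and the Euler operators are fixed), hence $\nu^*\shM_A(\beta)\cong\shM_A(\beta)$, and since $\nu^*$ commutes with $\Ddual$ the twist drops out of $\nu^*\Ddual\shM_A(\beta)$ as well. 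Finally, your worry about a possible shift $[n]$ in the commutation formula is unfounded for the algebra-automorphism definition of $\FL$ used in this paper: no shift occurs. With these small repairs your argument is airtight and yields exactly the paper's conclusion, including $\beta'=-\beta$ when $\beta$ avoids every facet span.
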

\begin{proof}
	By \cite[Lem.~1.13]{Rei14}, the homogeneity condition implies that every $A$-hypergeometric system is monodromic. By \cite[Prop.~6.13]{Bry86} (or rather the restatement of it for $D$-modules which appears in \cite[Th.~1.4]{Rei14}), if $\shM$ is monodromic, then $\Ddual \FL \shM\cong \FL \Ddual \shM$. Now use \Cref{prop:dual}.
\end{proof}

\medskip

\bibliographystyle{amsalpha}
\bibliography{gkz}

\end{document}